\date{}
         \newcommand{\MII}[2]{\mbox{$\int\limits^{\stackrel{#1}{\rightarrow}}_{#2}$}}
         \newcommand{\MIIL}[2]{\mbox{$\int\limits^{\stackrel{#1}{\leftarrow}}_{#2}$}}
         \newtheorem{thm}{Theorem}[section]
         \numberwithin{equation}{section}
         \newcommand{\be}{\begin{equation}}
         \newcommand{\ee}{\end{equation}}
         \newcommand{\ba}{\begin{array}}
         \newcommand{\ea}{\end{array}}
         \newcommand{\la}{\label}
         \newcommand{\rf}[1]{(\ref{#1})}
\begin{document}




\hoffset =-1.25truecm \voffset =-2.20truecm




        \begin{center}
 \textsc{\textbf{SOLITONIC COMBINATIONS, COMMUTING NONSELFADJOINT OPERATORS, AND APPLICATIONS}}%
        \footnote{Partially supported by Scientific Research Grant RD-08-61/2019 of University of Shumen and partially supported by the National Scientific Program "Information and Communication Technologies for a Single Digital Market in Science, Education and Security (ICTinSES)", financed by the Ministry of Education and Science.}
\vspace{0.9cm}
\\
        \textsc{\textbf{GALINA S. BORISOVA}}
        \\
        \vspace{0.5cm}
        Faculty of Mathematics and Computer Science,
        \\ Konstantin Preslavsky University of Shumen,
        \\ 9712 Shumen, Bulgaria
        \\ e-mail: \verb"g.borisova@shu.bg"
        \end{center}

%


        \begin{abstract}
In this paper, applications of the connection between the soliton theory and the commuting nonselfadjoint operator theory, established by M.S. Liv\v sic and Y. Avishai, are considered. An approach to the inverse scattering problem and to the wave equations is presented, based on the Liv\v{s}ic operator colligation theory (or vessel theory) in the case of commuting bounded nonselfadjoint operators in a Hilbert space, when one of the operators belongs to a larger class of nondissipative operators with asymptotics of the corresponding nondissipative curves. The generalized Gelfand-Levitan-Marchenko equation of the cases of different differential equations (the Korteweg-de Vries equation, the Schr\"{o}dinger equation, the Sine-Gordon equation, the Davey-Stewartson equation) are derived. Relations between the wave equations of the input and the output of the generalized open systems, corresponding to the Schr\"{o}dinger equation and the Korteweg-de Vries equation, are obtained. In these two cases, differential equations (the Sturm-Liouville equation and the 3-dimensional differential equation), satisfied by the components of the input and the output of the corresponding generalized open systems, are derived.
%
%
        \end{abstract}

        \par
    \textbf{Keywords}: dissipative operator, operator colligation, triangular model, solitonic combination, open system, wave equation
        \par
        \textbf{MSC 2010:} 47A48, 60G12, 47F05

%
%
\vspace{1cm}
        \hfill
        \textit{Dedicated to the memory of  prof. Kiril P. Kirchev}

%
%
        \par
        \section{Introduction}\label{s1}
        \par
        \par
M.S. Liv\v sic and Y. Avishai in their paper \cite{LIVSIC-AVISHAI} have established the connection between two mathematical theories: the theory of commuting nonselfadjoint operators \cite{LIVSIC-LECTURENOTES} and the theory of solitons \cite{SCOTT}. The essence of this connection is the Marchenko method for solving nonlinear differential equations (see \cite{MARCHENKO}). M.S. Liv\v sic and Y. Avishai have illustrated the idea behind this connection with the Korteweg-de Vries equation. They introduced an operator-valued function $\Gamma^{-1}(x,t)\Gamma_x(x,t)$ with
$\Gamma(x,t)=e^{i(tA+xB)}+M_1e^{i(tA^*+xB^*)}M$, considered as solitonic combinations and related them to the theory of couples $(A,B)$ of commuting nonselfadjoint operators in a Hilbert space $H$ when the operator $B$ is dissipative with a a zero limit
        \be\la{0-1}
        \lim\limits_{x\to\infty}(e^{ixB}f,e^{ixB}f)=0, \;\;\;\;\;\;\ f\in H
        \ee
where $( , )$ denotes an inner product in $H$. The scalar wave equations, generated by solitonic combinations, have been analyzed and the Gelfand-Levitan-Marchenko equation of the inverse scattering problem has been derived in this case of the operator $B$.
        \par
In \cite{JMAA} G.S. Borisova and K.P. Kirchev have obtained solutions  of different nonlinear differential equations (including new scalar solutions): the Schr\"{o}dinger equation, the Heisenberg equation, the Sine-Gordon equation, the Davey-Stewartson equation, using established connection between solitonic combinations and $n$-operator colligation theory, when one of the operators, $B$, belongs to a larger class of bounded nondissipative operators in a Hilbert space $H$, presented as couplings of dissipative and antidissipative operators with real absolutely continuous spectra. An essential property of the operators $B$ from this class nondissipative operators is the existence of the asymptotics of the corresponding continuous curves $e^{ixB}f$ (as $x\to\pm\infty$) and the storng limits
        $$s-\lim\limits_{x\to\pm\infty}e^{-ixB^*}e^{ixB}=\widetilde{S}^*_{\pm}\widetilde{S}_{\pm},$$
($\widetilde{S}_{\pm}$ are defined by \rf{21-2}) which have been obtained by the author in \cite{IEOT1}, \cite{SERDICA2}, \cite{SERDICA3} and the explicit form of these limits in terms of multiplicative integrals and finite dimensional analogue of the classical gamma function is presented in  \cite{IEOT1}.
The natural consideration of this class of operators follows from the system-theoretic significance of the colligation which is connected with the multiplication theorem of the corresponding ttransfer functions \cite{LIVSIC6}. The triangular model for this class of operators $B$ is introduced and investigated by the author in \cite{BAN2}, \cite{IEOT1}, \cite{SERDICA2}, \cite{SERDICA3}. The existence and the explicit form of the nonzero limits$\lim\limits_{x\to\pm\infty}(e^{ixB}f,e^{ixB}f)=(\widetilde{S}^*_{\pm}\widetilde{S}_{\pm}f,f)$, ($f\in H$)
obtained in \cite{IEOT1}
ensure the applications of the connection between solitonic combinations and $n$-tuples of commuting nonselfadjoint operators.
        \par
The theory of operator colligations in Hilbert spaces is deeply connected with the problem of wave dispersions, collective motions of spatio-temporal systems, classical scattering theory  (\cite{LIVSIC-OPERATORS-FIELDS}, \cite{LIVSIC-SYSTEM THEORY}, \cite{LIVSIC-VORTICES}, \cite{WAKSMAN}, and etc.). In series of papers (for example, \cite{LIVSIC-VORTICES}, \cite{MELNIKOV}, \cite{MELNIKOV1}, \cite{MELNIKOV2}, \cite{ALPAY}, and etc.) many common points of operator colligations (vessels) with the classical scattering theory of Sturm-Liouville operator, inverse scattering of linear differential equations are presented.
        \par
In this paper we continue with the representation of the connection between the commuting nonselfadjoint operator theory and the soliton theory and  applications of this connection, using the class of couplings of dissipative and antidissipative operators with real absolutely continuous spectra.
        \par
It turns out that in the case when the operator $B$ is a coupling of dissipative and antidissipative operators,
the mode $v_h(\xi)$ (connected with the output $v(\xi,\tau)$ of the corresponding generalized open system and satisfying the corresponding output matrix wave equation) of a vector function
        $$
        h(x,t)=S(x,t)g-iBg
        $$
where $g$ belongs to the nonhermitian subspace $G_B=(B-B^*)H$ of the operator $B$ and S(x,t) is a solitonic combination of different nonlinear differential equations (for example, the Schr\"{o}dinger equation, the Sine-Gordon equation, the Davey-Stewartson equation) satisfies the integral equation (which is the generalized form of the Gelfand-Levitan-Marchenko equation). This integral equation is connected with the transfer function of the operator $B$, with the complete characteristic function of the couples $(A,B)$ or triplets $(A,B,C)$, with the Fourier transform of a function from the Hilbert space. It is shown that the output $v(\xi,\tau)$ of the corresponding generalized open system is determined uniquely as a solution of the corresponding matrix wave equation, satisfying the condition $v(\xi,0)=v_h(\xi)$.
        \par
In Section \ref{s2} we remind some preliminary results concerning commuting nonselfadjoint operators, which generate generalized open systems, corresponding collective motions, and matrix wave equations (considered in \cite{JMAA}) in the case of $n$-tuples of commuting nonselfadjoint operators, when one of them is a coupling of dissipative and antidissipative operators with real absolutely continuous real spectra. We also present the results concerning a space of solutions of the matrix wave equations. These results are considered in \cite{JMAA} and \cite{BAN7}.
        \par
In Section \ref{s3} we derive the generalized Gelfand-Levitan-Marchenko equation of the inverse scattering problem using the connection between Korteweg-de Vries equation and appropriate couples of commuting nonselfadjoint operators, when one of them is a coupling of dissipative and antidissipative operators with real spectra. The properties of this integral equation are obtained, based on the results of commuting nonselfadjoint operator theory, concerning open systems, collective motions, characteristic function of the operator. At the end of this section an interesting connection with results of tha paper \cite{BAN6} for solutions of the Sturm-Liouville systems in the special case of the operator $B$ is presented.
        \par
Section \ref{s4} and Section \ref{s5} are dedicated to the similar problem as in Section \ref{s3}, but connected with other nonlinear differential equations: the Schr\"{o}dinger equation, the Sine-Gordon equation, the Davey-Stewartson equation and generalized Gelfand-Levitan-Marchenko equation using essentially the solitonic combinations, obtained in \cite{JMAA} and connected with appropriate couples and triplets of commuting nonselfadjoint operators.
        \par
In Section \ref{s6}, Section \ref{s7}, Section \ref{s8}, Section \ref{s9} we consider the generalized open systems, connected with obtaining of solutions of the nonlinear Schr\"{o}dinger equation and the Korteweg-de Vries equation in the special case of separated variables in the input, the state, and the output of the corresponding generalized open systems for appropriate couples $(A,B)$ of commuting nonselfadjoint operators. We consider separately two cases -- when operators $A$, $B$ do not depend on the variables $x$ and $t$ and the case when $A$, $B$ depend on the saptial variable $x$. We derive what kind of differential equations are satisfied by the components of the input
and the output of the corresponding generalized open systems. It turns out that the components of the input and the output satisfy the Sturm-Liouville differential equation from the form
        $$
        Ly=-y''+q(x)y=\lambda y
        $$
in the case of the nonlinear Schr\"{o}dinger equation and the 3-dimensional differential equation from the form
        $$
        Ly=\frac{d^3y}{dy^3}-p(x)\frac{dy}{dx}-q(x)y=\lambda y
        $$
in the case of the Korteweg-de Vries equation.
        \par
This paper is a continuation of the papers \cite{JMAA} and \cite{BAN7} of the author and presents other aspects of the connection between the soliton theory and the theory of commuting nonselfadjoint operators, established by M.S. Liv\v sic and Y. Avishai in \cite{LIVSIC-AVISHAI}.

        \section{Preliminary results}\la{s2}
        \par
    In this section we will present some preliminary results concerning commuting nonselfadjoint operators generating generalized open systems, corresponding collective motions, and matrix wave equations (considered by G.S. Borisova and K.P. Kirchev in \cite{JMAA}). These results are obtained in the case of $n$-tuples of commuting nonselfadjoint bounded operators when one of them belongs to a larger class of nondissipative operators, presented as couplings of dissipative and antidissipative operators and they are presented in \cite{BAN7}.
%
        \par
    Let us consider commuting nonselfadjoint bounded linear operators $A_1,A_2,\ldots,A_n$ ($A_kA_s=A_sA_k$, $k,s=1,2,\ldots,n$) in a Hilbert space $H$.
    Let these operators be embedded in a \textit{commutative regular colligation}
        \be\la{1}
        X=(A_1,A_2,\ldots,A_n;H,\Phi,E;\sigma_1,\sigma_2,\ldots,\sigma_n,\{\gamma_{ks}\},\{\widetilde{\gamma}_{ks}\},
        k,s=1,2,\ldots,n)
        \ee
    where $E$ is another Hilbert space, $\Phi$ is a bounded linear mapping of $H$ into $E$,
    $\sigma_1,\sigma_2,\ldots,\sigma_n$, $\{\gamma_{ks}\}$, $\{\widetilde{\gamma}_{ks}\}$, $(k,s=1,2,\ldots,n)$
are bounded linear selfadjoint operators in $E$ (where $\gamma_{ks}=\gamma_{ks}^*=-\gamma_{sk}$) and they satisfy the next conditions:
        \be\la{4}
        (A_k-A_k^*)/i=\Phi^*\sigma_k\Phi,
        \ee
        \be\la{5}
        \sigma_s\Phi A_k^*-\sigma_k\Phi A_s^*=\gamma_{ks}\Phi,
        \ee
        \be\la{6}
        \widetilde{\gamma}_{ks}=\gamma_{ks}+i(\sigma_k\Phi\Phi^*\sigma_s-\sigma_s\Phi\Phi^*\sigma_k)
        \ee
    for $k,s=1,2,\ldots,n$.
Instead of the term "regular colligation" one can use the term "vessel", that has been coined in \cite{LIVSIC-AT-AL}.
        \par
    In the most of important cases the space $E$ satisfies the condition $\dim E<\infty$ (which implies that the operators $A_1,A_2,\ldots,A_n$ have finite dimensional imaginary parts).
        \par
        In what follows, we assume that $\dim E<\infty$ and $\bigcap\limits_{k=1}^n\ker \sigma_k=\{0\}$. If $range\; \Phi=E$, the colligation $X$
is called a \textit{strict colligation}.
        \par
    The system-theoretic interpretation of $n$-operator colligation leads to an open $n$-dimensional system. We consider the generalized open system (introduced by G.S. Borisova and K.P. Kirchev in \cite{JMAA}) from the form
        \par
        \be\la{7}
        \left\{
        \ba{l}
        i\frac{1}{\varepsilon_k}\frac{\partial }{\partial x_k}f(x)+A_kf(x)=\Phi^*\sigma_ku(x), \;\;\; k=1,2,\ldots,n,
        \\
        v(x)=u(x)-i\Phi f(x),
        \ea
        \right.
        \ee
    where $x=(x_1,x_2,\ldots,x_n)$, $f(x)|_{\Gamma_+}=f_0(x)$ ($\Gamma_+=\partial \mathbb{R}^n_+$),
   $\varepsilon_1,\varepsilon_2,\ldots,\varepsilon_n\in\mathbb{C}$ are constants and the vector functions
        $u(x)=u(x_1,x_2,\ldots,x_n)$,
        $v(x)=v(x_1,x_2,\ldots,x_n)$,
        $f(x)=f(x_1,x_2,\ldots,x_n)$
are the input, the output, and the internal state of the open system \rf{7}.
(In the cases, considered by M.S. Liv\v sic, the open systems are when
    $\varepsilon_1=\varepsilon_2=\cdots=\varepsilon_n=1$.)
        \par
        Direct calculations (\cite{JMAA}, Theorem 6.1) show that the system \rf{7} has a solution if the function $f_0(x)$ on $\Gamma_+$ satisfies the equations from
\rf{7} and the vector function $u(x)$ is a solution of the system
        \be\la{9}
        \sigma_k\left(-i \frac{1}{\varepsilon_s}\frac{\partial u}{\partial x_s}\right)-
        \sigma_s\left(-i \frac{1}{\varepsilon_k}\frac{\partial u}{\partial x_k}\right)+
        \gamma_{sk}u=0,
        \ee
    ($k,s=1,2,\ldots,n$). In other words (following M.S. Liv\v sic and Y. Avishai \cite{LIVSIC-AVISHAI}) $u(x)$ satisfies the  matrix wave equations \rf{9}.
        \par
        The system \rf{7} is over determined in the case when $n\geq 3$ and to avoid this one has to consider the additional conditions for the operators $\{\sigma_k,\gamma_{ks}\}$, $k,s=1,2,\ldots,n$ when $\det\sigma_n\neq 0$. These conditions have been introduced by V. Zolotarev in the paper \cite{ZOLOTAREV-TIME CONES} and have the form:
        \be\la{a10}
        \sigma_n^{-1}\sigma_k\sigma_n^{-1}\sigma_s=\sigma_n^{-1}\sigma_s\sigma_n^{-1}\sigma_k,
        \ee
        \be\la{a11}
        \sigma_n^{-1}\sigma_k\sigma_n^{-1}\gamma_{sn}+\sigma_n^{-1}\gamma_{kn}\sigma_n^{-1}\sigma_s
        =
        \sigma_n^{-1}\gamma_{kn}\sigma_n^{-1}\gamma_{sn}+\sigma_n^{-1}\gamma_{sn}\sigma_n^{-1}\gamma_{kn},
        \ee
        \be\la{12}
        \sigma_n^{-1}\gamma_{kn}\sigma_n^{-1}\gamma_{sn}=
        \sigma_n^{-1}\gamma_{sn}\sigma_n^{-1}\gamma_{kn}
        \ee
    $k,s=1,2,\ldots,n-1$. The  conditions \rf{a10}, \rf{a11}, \rf{12} follow from the equalities of the mixed partial derivatives
        $
        \frac{\partial^2u}{\partial x_k\partial x_s}=\frac{\partial^2u}{\partial x_s\partial x_k},
        $
        $
        k,s=1,2,\ldots,n.
        $
        Then from \rf{9} it follows that
        \be\la{14}
        \gamma_{ks}=\sigma_s\sigma_n^{-1}\gamma_{kn}-\sigma_k\sigma_n^{-1}\gamma_{sn}, \;\;\; k,s=1,2,\ldots,n.
        \ee
    Consequently, when $\sigma_n$ is invertible matrix, the commutative regular colligation is determined by the matrices $\{\sigma_k\}_1^n$, $\{\gamma_{kn}\}_{k=1}^{n-1}$, satisfying the conditions \rf{a10}, \rf{a11}, \rf{12}, and other operators $\gamma_{ks}$, $k,s=1,2,\ldots,n-1$ are defined by the equalities \rf{14} (see \cite{JMAA}).
    The selfadjoint operators $\widetilde{\gamma}_{ks}$ ($k,s=1,2,\ldots,n$), defined by \rf{6}, satisfy analogous relations as $\gamma_{ks}$, i.e.
        $$
        \widetilde{\gamma}_{ks}=\widetilde{\gamma}_{ks}^*=-\widetilde{\gamma}_{sk}, \;\;\;\;\;\;
        \sigma_k\Phi A_s-\sigma_s\Phi A_k=\widetilde{\gamma}_{ks}\Phi,
        $$
        $$
        \sigma_n^{-1}\sigma_k\sigma_n^{-1}\widetilde{\gamma}_{sn}+
        \sigma_n^{-1}\widetilde{\gamma}_{kn}\sigma_n^{-1}\sigma_s
        =
        \sigma_n^{-1}\sigma_s\sigma_n^{-1}\widetilde{\gamma}_{kn}+
        \sigma_n^{-1}\widetilde{\gamma}_{sn}\sigma_n^{-1}\sigma_k,
        $$
        $$
        \sigma_n^{-1}\widetilde{\gamma}_{kn}\sigma_n^{-1}\widetilde{\gamma}_{sn}=
        \sigma_n^{-1}\widetilde{\gamma}_{sn}\sigma_n^{-1}\widetilde{\gamma}_{kn}, \;\;\;\;\;\;
        \widetilde{\gamma}_{ks}=
        \sigma_s\sigma_n^{-1}\widetilde{\gamma}_{kn}-
        \sigma_k\sigma_n^{-1}\widetilde{\gamma}_{sn}.
        $$
        \par
Now in the case when $n\geq 3$  we consider the colligation from the form
        \be\la{14-0}
        X=(A_1,A_2,\ldots,A_n;H,\Phi,E;\sigma_1,\sigma_2,\ldots,\sigma_n,\{\gamma_{kn}\},\{\widetilde{\gamma}_{kn}\},
        k=1,2,\ldots,n-1)
        \ee
instead of the commuting regular colligation \rf{1}.
        \par
Next, if the input $u(x)$ of the generalized open system \rf{7}, corresponding to the commutative regular colligation \rf{1}, satisfies the matrix wave equations \rf{9}, then the output $v(x)$ from \rf{7} satisfies the system (or matrix wave equations)
        \be\la{14-1}
        \sigma_k\left(-i\frac{1}{\varepsilon_s}\frac{\partial v}{\partial x_s}\right)-
        \sigma_s\left(-i\frac{1}{\varepsilon_k}\frac{\partial v}{\partial x_k}\right)+\widetilde{\gamma}_{sk}v=0,
        \ee
    $k,s=1,2,\ldots,n$ (see \cite{JMAA}, Theorem 6.2).
        Let us consider the collective motions
        \be\la{15}
        T(x_1,\ldots,x_n)=e^{i(\varepsilon_1x_1A_1+\cdots+\varepsilon_nx_nA_n)}, \;\;\;\;\;
        {T^*}^{-1}(x_1,\ldots,x_n)=
        e^{i(\overline{\varepsilon}_1x_1A_1^*+\cdots+\overline{\varepsilon}_nx_nA_n^*)},
        \ee
    where $T(x_1,x_2,\ldots,x_n)f$ ($f\in H$) is a solution of the corresponding open system \rf{7} with zero input $u(x)=0$. Then the operator functions
        $$
        V(x_1,x_2,\ldots,x_n)=\Phi T(x_1,x_2,\ldots,x_n), \;\;\;\;\;
        $$
        $$
        \widetilde{V}(x_1,x_2,\ldots,x_n)=\Phi {T^*}^{-1}(x_1,x_2,\ldots,x_n),
        $$
        satisfy the following systems of partial differential equations
        \be\la{17}
        \left(
        \sigma_n\left(-i\frac{1}{\varepsilon_k}\frac{\partial}{\partial x_k}\right)-
        \sigma_k\left(-i\frac{1}{\varepsilon_n}\frac{\partial}{\partial x_n}\right)+\widetilde{\gamma}_{kn}
        \right)
        V(x_1,x_2,\ldots,x_n)=0,
        \ee
        \be\la{18}
        \left(
        \sigma_n\left(-i\frac{1}{\overline{\varepsilon}_k}\frac{\partial}{\partial x_k}\right)-
        \sigma_k\left(-i\frac{1}{\overline{\varepsilon}_n}\frac{\partial}{\partial x_n}\right)+\gamma_{kn}
        \right)
        \widetilde{V}(x_1,x_2,\ldots,x_n)=0
        \ee
correspondingly, $k=1,2,\ldots,n-1$ (see Theorem 6.3 in \cite{JMAA}).
        \par
Let us consider the case when one of the the operators $A_1, A_2, \ldots,A_n$ is a coupling of dissipative and antidissipative operators with real absolutely continuous spectra (for example, $A_1$) and $\varepsilon_1=1$. Without loss of generality we can suppose that $A_1=B$, where $B$ is the triangular model of this coupling (introduced by G.S. Borisova in \cite{BAN2} and investigated by G.S. Borisova and K.P. Kirchev in \cite{IEOT1}, \cite{SERDICA3}):
        \be\la{19}
        \ba{c}
        Bf(w)=\alpha(w)f(w)-i\int\limits_{a'}^wf(\xi)\Pi(\xi)S^*\Pi^*(w)d\xi
        +\\+
        i\int\limits_w^{b'}f(\xi)\Pi(\xi)S\Pi^*(w)d\xi+
        i\int\limits_{a'}^wf(\xi)\Pi(\xi)L\Pi^*(w)d\xi,
        \ea
        \ee
    where $f=(f_1,f_2,\ldots,f_p)\in H={\bf L}^2(\Delta;\mathbb{C}^p)$, $\Delta=[a',b']$,
    $L:\mathbb{C}^m \longrightarrow\mathbb{C}^m$,
    $\det L\neq 0$,
    $L^*=L$,
    $L=J_1-J_2+S+S^*$,
            \be\la{20}
    J_{1}=\left(
    \begin{array}{cc}
    I_{r} & 0 \\ 0 & 0 \\
    \end{array} \right) , \;
            J_{2}=\left(
            \begin{array}{cc}
            0 & 0 \\ 0 & I_{m-r} \\
            \end{array} \right) , \;
                     S=\left(
                     \begin{array}{cc}
                     0 & 0 \\ \widehat {S} & 0 \\
                    \end{array} \right),
            \ee
    $r$ is the number of positive eigenvalues and $m-r$ is the
    number of negative  eigenvalues of the matrix $L$, $\Pi(w)$
    is a measurable $p\times m$ ($1\leq p\leq m$) matrix function
    on $\Delta$, whose rows are linearly independent at each point
    of a set of positive measure, the matrix function $\widetilde{\Pi}(w)=\Pi^*(w)\Pi(w)$ satisfies the conditions
        $tr\; \widetilde{\Pi}(w)=1$,  $\widetilde{\Pi}(w)J_1=J_1\widetilde{\Pi}(w)$,
    $||\widetilde{\Pi}(w_1)-\widetilde{\Pi}(w_2)||\leq C|w_1-w_2|^{\alpha_1}$ for all
    $w_1,w_2\in \Delta$ for some constant $C>0$, $\alpha_1$  is an appropriate constant with $0<\alpha_1\leq 1$ (see \cite{IEOT1}), (where $||\;\;||$ is the norm in $\mathbb{C}^m$)
    and the function $\alpha:\Delta\longrightarrow \mathbb{R}$ satisfies
    the conditions:
        \par
        \textbf{(i)}
        the function $\alpha(w)$ is continuous strictly increasing on $\Delta$;
        \par
        \textbf{(ii)}
        the inverse function $\sigma(u)$ of $\alpha(w)$ is
    absolutely continuous on $[a,b]$ ($a=\alpha(a')$,
    $b=\alpha(b')$);
        \par
        \textbf{(iii)}
        $\sigma'(u)$ is continuous and satisfies the relation
        $|\sigma'(u_1)-\sigma'(u_2)|\leq C|u_1-u_2|^{\alpha_2}$,
        $(0<\alpha_2\leq 1)$
    for all $u_1$, $u_2\in [a,b]$ and for some constant $C>0$.
        \par
    The imaginary part of the operator $B$ from \rf{19} satisfies the condition
        $(B-B^*)/i=\Phi^*L\Phi$,
    where the  operator $\Phi:H\longrightarrow H$ is defined by the equality
        \be\la{20-2}
        \Phi f(w)=\int\limits_{a'}^{b'}f(w)\Pi(w)dw.
        \ee
        \par
The existence of the wave operators
        $
        W_{\pm}(B^*,B)=s-\lim\limits_{x\to\pm\infty}e^{ixB^*}e^{-ixB}
        $
of the couple of operators $(B,B^*)$ as strong limits has been established and their explicit form has been obtained in \cite{IEOT1} and \cite{SERDICA2}, i.e.
        \be\la{21}
        W_{\pm}(B^*,B)=s-\lim\limits_{x\to\pm\infty}e^{ixB^*}e^{-ixB}=\widetilde{S}_{\mp}^*\widetilde{S}_{\mp}.
        \ee
The explicit form of the operators $\widetilde{S}_{\mp}$ on the right hand side of the relation \rf{21} for the operator $B$ with triangular model \rf{19} has been obtained in \cite{IEOT1} in the terms of the multiplicative integrals and the finite dimensional analogue of the classical gamma function (introduced in \cite{IEOT1}) and presented by \rf{21-2}.
        \par
To avoid complications of writing we consider the case when $\alpha(w)=w$, i.e. the operator $B$ has the form
        \be\la{21-1}
        \ba{c}
        Bf(w)=wf(w)-i\int\limits_{a'}^wf(\xi)\Pi(\xi)S^*\Pi^*(w)d\xi
        +\\+
        i\int\limits_w^{b'}f(\xi)\Pi(\xi)S\Pi^*(w)d\xi+
        i\int\limits_{a'}^wf(\xi)\Pi(\xi)L\Pi^*(w)d\xi,
        \ea
        \ee
 Then the operators $\widetilde{S}_{\mp}$ take the form (see, for example, \cite{IEOT1})
        \be\la{21-2}
        \widetilde{S}_{\pm}f(w)=(\widehat{S}_{\pm}f(w))T_{\pm},
        \;\;\;\;
        \widehat{S}_{\pm}f=\widetilde{S}_{11}f+\widetilde{S}_{22}f
        +\widetilde{S}_{12}^{\pm}f,
        \ee
        $$
        S_{\pm}f(w)=(\widehat{S}_{\pm}f(w))T_{\pm}\Pi(w)
        (J_1|t|^{i\widetilde{\Pi}_1(w)}J_1+J_2|t|^{-i\widetilde{\Pi}_2(w)}J_2)Q(w),
        $$
        $$
        \ba{c}
        T_{\pm}h=h(J_1U_{2a'}(w)w^{i\widetilde{\Pi}_1(w)}e^{\mp\frac{\pi}{2}\widetilde{\Pi}_1(w)}
        \mathbf{\Gamma}^{-1}(I+i\widetilde{\Pi}_1(w))J_1
        + \\ +
        J_2\widetilde{U}_{2a'}(w)w^{-i\widetilde{\Pi}_2(w)}e^{\pm\frac{\pi}{2}\widetilde{\Pi}_2(w)}
        \mathbf{\Gamma}^{-1}(I-i\widetilde{\Pi}_2(w))J_2)\Pi^*(w), \; \; \;
        (\forall h\in \mathbb{C}^m),
        \ea
        $$
        $$
        \widetilde{S}_{kk}f(w)=\int\limits_{a'}^x\widetilde{f}'(\xi)\MIIL{w}{a'}
        e^{\frac{(-1)^{k+1}i\widetilde{\Pi}_k(v)}{v-\xi}dv}d\xi J_k,  \; \;
        \widetilde{S}_{12}^{\pm}f(w)=-\int\limits_{a'}^{b'}\widetilde{f}'(\xi)
        \widetilde{F}_{\xi}^{\mp}(w,b')d\xi S,
        $$
        $$
        U_{2\xi}(w)=\lim\limits_{\delta\to 0}
        \MII{w-\delta}{\xi}e^{\frac{-i\widetilde{\Pi}_1(v)}{v-w}dv}
        e^{i\int\limits_{\xi}^{w-\delta}\frac{\widetilde{\Pi}_1(w)}{v-w}dv},\;\;
        \widetilde{U}_{2\xi}(w)=\lim\limits_{\delta\to 0}
        \MII{w-\delta}{\xi}e^{\frac{i\widetilde{\Pi}_2(v)}{v-w}dv}
        e^{-i\int\limits_{\xi}^{w-\delta}\frac{\widetilde{\Pi}_2(w)}{v-w}dv},
        $$
        $$
        \mathbf{\Gamma}(\varepsilon I-iT(u))=
        \int\limits_0^{\infty}e^{-x}
        e^{((\varepsilon-1)I-iT(u))\ln x}dx \; \; \;
        (\varepsilon>0).
        $$
        $$
        \widetilde{\Pi}_k(w)=J_k\widetilde{\Pi}(w)J_k=J_k\Pi^*(w)\Pi(w)J_k, \;\;\;\;\;
        \widetilde{f}(w)=f(w)Q^*(w),
        $$
        $k=1,2$. In the last equality, $m\times p$ matrix function $Q(w)$ is smooth on $\Delta$ and satisfies the condition
        $\Pi(w)Q(w)=I$.
        \par
        The existence  and the explicit form of the limits \rf{21} in the case of a coupling $A_1=B$ allow to introduce an appropriate scalar product in the space of solutions of the equations \rf{14-1} from the form
        $v_h(x_1,x_2,\ldots,x_n)=\Phi e^{i(\varepsilon_1 x_1A_1+\cdots+\varepsilon_n x_nA_n)}h$.
        \par
        \par
        Let $\widehat{H}$ be the principal subspace of the colligation $X$ from the form \rf{1} and $A_1=B$, where $B$ is the triangular model \rf{21-1}, i.e.
        \be\la{22}
        \widehat{H}=\overline{\textit{span}}\;\{A_1^{m_1}A_2^{m_2}\ldots A_n^{m_n}\Phi^*E, m_1,m_2,\ldots,m_n\in \mathbb{N}\cup\{0\}\}.
        \ee
Let $\widetilde{H}$ be the set of solutions
        \be\la{23}
        v_h(x_1,x_2,\ldots,x_n)=\Phi T(x_1,x_2,\ldots,x_n)h=\Phi e^{i(\varepsilon_1 x_1A_1+\cdots+\varepsilon_n x_nA_n)}h,  \;\;\; h\in \widehat{H}
        \ee
of the system \rf{14-1}. Let the operator
        $
        U:\widehat{H}\longrightarrow\widetilde{H}
        $
        be defined by the equality
        \be\la{24}
        Uh=\Phi e^{i(\varepsilon_1 x_1A_1+\cdots+\varepsilon_n x_nA_n)}h=v_h(x_1,x_2,\ldots,x_n), \;\;\; h\in \widehat{H}.
        \ee
Let the commuting nonselfadjoint operators $A_1,A_2,\ldots,A_n$ with $A_1=B$ and $B$ from the form \rf{21-1} be embedded in the colligation
        $$
        \ba{c}
        X=(A_1=B,A_2,\ldots,A_n;H=\mathbf{L}^2(\Delta,\mathbb{C}^p),\Phi,E=\mathbb{C}^m;
        \\
        \sigma_1,\ldots,\sigma_n,\{\gamma_{ks}\},
        \{\widetilde{\gamma}_{ks}\},k,s=1,2,\ldots,n).
        \ea
        $$
        \begin{thm}\la{t1} (see \cite{BAN7})
        The equality
        \be\la{25-1}
        \ba{c}
        \left\langle v_{h_1}(x_1,\ldots,x_n),v_{h_2}(x_1,\ldots,x_n)\right\rangle=
        \lim\limits_{x_1\to+\infty}(e^{ix_1A_1}h_1,e^{ix_1A_1}h_2)
        + \\ +
        \int\limits_0^{\infty}(\sigma_1 v_{h_1}(x_1,0,\ldots,0),v_{h_2}(x_1,0,\ldots,0)dx_1
        = \\ =
        (\widetilde{S}_+^*\widetilde{S}_+h_1,h_2)+\int\limits_0^{\infty}(\sigma_1 v_{h_1}(x_1,0,\ldots,0),v_{h_2}(x_1,0,\ldots,0))dx_1,
        \;\;\;\; h_1,h_2\in H
        \ea
        \ee
        defines a scalar product in the subspace $\widetilde{H}$ of solutions
        $$
        v_h(x_1,\ldots,x_n)=\Phi e^{i(\varepsilon_1 x_1A_1+\cdots+\varepsilon_n x_nA_n)}h,
        $$
        $h\in\widehat{H}$ 
        (with $\varepsilon_1=1$, $A_1=B$) of the equations \rf{14-1}.
        \par
        \end{thm}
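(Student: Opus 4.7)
\textbf{The plan} is to establish the single identity
$$
\langle v_{h_1},v_{h_2}\rangle=(h_1,h_2)_H \qquad (h_1,h_2\in\widehat H),
$$
which reduces the three scalar-product axioms on $\widetilde H$ to the corresponding properties of the ambient Hilbert inner product on $\widehat H$. Since the right-hand side of \rf{25-1} refers only to the functions $v_{h_j}$, well-definedness on $\widetilde H$ is automatic; sesquilinearity and conjugate symmetry are then immediate, while positivity together with non-degeneracy follow because vanishing of $\langle v_h,v_h\rangle$ forces $h=0$ and hence $v_h\equiv 0$ (in particular this argument shows that $U$ from \rf{24} is injective, making it a bijection $\widehat H\to\widetilde H$).

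\textbf{The main computation} is a direct differentiation. Setting $\psi(x_1):=(e^{ix_1A_1}h_1,e^{ix_1A_1}h_2)$, one obtains
$$
\psi'(x_1)=i\bigl((A_1-A_1^*)e^{ix_1A_1}h_1,\,e^{ix_1A_1}h_2\bigr),
$$
and the colligation identity \rf{4} combined with \rf{23} gives $\psi'(x_1)=-\bigl(\sigma_1 v_{h_1}(x_1,0,\ldots,0),\,v_{h_2}(x_1,0,\ldots,0)\bigr)$. Integrating from $0$ to $+\infty$ and transposing yields
$$
(h_1,h_2)=\lim_{x_1\to+\infty}(e^{ix_1A_1}h_1,e^{ix_1A_1}h_2)+\int_0^{\infty}\bigl(\sigma_1 v_{h_1}(x_1,0,\ldots,0),\,v_{h_2}(x_1,0,\ldots,0)\bigr)dx_1,
$$
which is precisely the first line of \rf{25-1}.

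\textbf{The limit identification} is where the specific structure of the coupling $A_1=B$ enters. Writing $(e^{ix_1A_1}h_1,e^{ix_1A_1}h_2)=(e^{-ix_1A_1^*}e^{ix_1A_1}h_1,h_2)$ and substituting $y=-x_1$ converts the limit as $x_1\to+\infty$ into $\lim_{y\to-\infty}(e^{iyA_1^*}e^{-iyA_1}h_1,h_2)$, which by the wave-operator formula \rf{21} equals $(W_-(B^*,B)h_1,h_2)=(\widetilde S_+^*\widetilde S_+ h_1,h_2)$. This accounts for the transition to the second line of \rf{25-1}, and the existence of the strong limit in \rf{21} also guarantees convergence of the improper integral.

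\textbf{The only delicate point} is the presence of nondissipative behaviour: in the purely dissipative setting of Liv\v sic--Avishai the boundary term $\psi(+\infty)$ vanishes by \rf{0-1}, whereas here it is genuinely nonzero and must be captured through the nontrivial asymptotics $\widetilde S_+^*\widetilde S_+$ established in \cite{IEOT1} and \cite{SERDICA2}. Once that input is accepted, every remaining step is a routine application of the colligation identity \rf{4} and the definition \rf{23} of $v_h$.
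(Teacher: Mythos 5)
Your argument is correct and coincides with the paper's: Theorem \ref{t1} is only cited from \cite{BAN7} here, but the proof the paper sketches for the analogous Theorem \ref{t5} is precisely your computation --- differentiate $(e^{ix_1A_1}h_1,e^{ix_1A_1}h_2)$, convert the derivative to $-(\sigma_1 v_{h_1},v_{h_2})$ via \rf{4}, integrate, and identify the boundary term at $+\infty$ with $(\widetilde{S}_+^*\widetilde{S}_+h_1,h_2)$ through \rf{21}. The one assertion stated too quickly is that well-definedness on $\widetilde{H}$ is ``automatic'': the limit term in \rf{25-1} involves $h_1,h_2$ themselves rather than only the functions $v_{h_j}$, so one still needs injectivity of $h\mapsto v_h$ on $\widehat{H}$ --- a point the paper likewise passes over when declaring $U$ unitary.
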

        \par
        The equalities \rf{25-1} imply also that the operator $U$, defined by \rf{24}, is an isometric one and
        $$
        \left\langle v_{h_1},v_{h_2}\right\rangle=\left\langle Uh_1,Uh_2\right\rangle=(h_1,h_2), \;\;\;\; h_1,h_2\in\widehat{H}.
        $$
    It has to mention that the operator $U:\widehat{H}\longrightarrow\widetilde{H}$, defined by the equality \rf{24}, is also unitary.
       \par
        \par
        The case of two commuting nonselfadjoint operators $(A_1,A_2)$ where $A_1$ is a dissipative operator with zero limit
        $\lim\limits_{x_1\to\infty}(e^{ix_1A_1}h,e^{ix_1A_1}h)=0$ ($h\in H$, $\varepsilon_1=\varepsilon_2=1$),
considered by M.S. Liv\v sic in \cite{LIVSIC-MAPPING}, and the case of $n$ commuting nonselfadjoint operators $(A_1,\ldots,A_n)$, where $A_1$ is a dissipative operator with nonzero limit
        $\lim\limits_{x_1\to\infty}(e^{ix_1A_1}h,e^{ix_1A_1}h)\neq 0$ ($h\in H$, $\varepsilon_1=\cdots=\varepsilon_n=1$),
considered by G.S. Borisova and K.P. Kirchev in \cite{SERDICA1}, show that
$v_0(x_n)$ determines uniquely the output $v(x_1,\dots,x_n)$ by the equations \rf{14-1} ($s=1,k=1,2,\ldots,n-1)$ and the condition $v(0,\ldots,0,x_n)=v_0(x_n)$ in the region of an existence and uniqueness of the solutions (see \cite{MYSOHATA}).
        \par
Following the terminology by M.S. Liv\v sic in \cite{LIVSIC-MAPPING} the functions $v_h(x_1,\ldots,x_n)$ and $v_h(x_1,0,\ldots,0)$ are said to be the output representation and the mode of the element $h\in\widetilde{H}$ correspondingly.
        \par
The next theorem solves a similar problem for the output $v_h(x_1,\ldots,x_n)$ and the mode $v_h(0,\ldots,0,x_n)$ in the case of $n$ operators ($n\geq 3$) with nonzero constants $\varepsilon_1,\ldots,\varepsilon_n$, when $A_1=B$ is a coupling of dissipative and antidissipative operators with real absolutely continuous spectra, which ensures the existence of the limit
        $\lim\limits_{x\to+\infty}(e^{ixB}f,e^{ixB}f)$,
obtained explicitly in \cite{IEOT1}. In this case we essentialy use the conditions of V.A. Zolotarev \cite{ZOLOTAREV-TIME CONES}.
        \par
We consider now the boundary value problem for solutions of the partial differential equations
        \be\la{31-1}
        \left\{
        \ba{l}
        \sigma_n\left(-i\frac{1}{\varepsilon_k}\frac{\partial v}{\partial x_k}\right)-
        \sigma_k\left(-i\frac{1}{\varepsilon_n}\frac{\partial v}{\partial x_n}\right)+\widetilde{\gamma}_{kn}v=0, \;\;\;
        k=1,2,\ldots,n-1\\
        v(0,\ldots,0,x_n)=v_0(x_n)
        \ea
        \right.
        \ee
which are restrictions to $\mathbb{R}^n$ of entire functions on $\mathbb{C}^n$. We will denote by $(z_1,\ldots,z_n)$ the coordinates on $\mathbb{C}^n$ and by $(x_1,\ldots,x_n)$ the coordinates on $\mathbb{R}^n$.
%
        \begin{thm}\la{t3} (see \cite{BAN7})
        Let $\sigma_1,\sigma_2,\ldots,\sigma_n$, $\{\widetilde{\gamma}_{kn}\}$  ($k=1,2,\ldots,n-1$) be $m\times m$ hermitian matrices with $\det\sigma_n\neq 0$ and they satisfy the conditions of V.A. Zolotarev \rf{a10}, \rf{a11}, \rf{12}. Then
        \par
        \par
        1)
        if $v(x_1,\ldots,x_n)$ is a solution of
        \be\la{i75}
        \sigma_n\frac{1}{\varepsilon_k}\frac{\partial v}{\partial x_k}-\sigma_k\frac{1}{\varepsilon_n}\frac{\partial v}{\partial x_n}+
        i\widetilde{\gamma}_{kn}v=0, \;\;\;\;k=1,2,\ldots,n-1,
        \ee
        which is a restriction to $\mathbb{R}^n$ of entire function on $\mathbb{C}^n$, and $v(0,\ldots,0,x)=0$ for all $x\in \mathbb{R}$, then $v(x_1,\ldots,x_n)=0$;
        \par
        \par
        2)
        if $\{v_l(x_1,\ldots,x_n)\}$ is a sequence of solutions of the system \rf{i75} which are restrictions to $\mathbb{R}^n$ of entire function on $\mathbb{C}^n$, satisfying the condition
        $v_l(0,\ldots,0,x)\longrightarrow g(x)$ as $l\to \infty$ ($\forall x\in\mathbb{R}$)
        where $g(x)$ is a function on $\mathbb{R}$ which is infinitely differentiable in a neighbourhood of $0$ and there exists a constant $C$ such that
        \be\la{i77}
        \lim\limits_{l\to\infty}
        \left(\left(\frac{\partial v_l}{\partial x_n^k}(0,\ldots,0)-\frac{d^kg}{dx^k}(0)\right)/C^k\right)=0
        \ee
        uniformly according to $k$, then there exists a solution $v(x_1,\ldots,x_n)$ of \rf{i75} which is a restriction to $\mathbb{R}^n$ of an entire function on $\mathbb{C}^n$, such that $v(0,\ldots,0,x)=g(x)$ for all $x\in\mathbb{R}$ and
        $
        v_l(z_1,\ldots,z_n)\longrightarrow v(z_1,\ldots,z_n)
        $
        as $l\to \infty$ uniformly on compact subset on $\mathbb{C}^n$.
        \end{thm}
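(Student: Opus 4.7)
The strategy is to rewrite the system \rf{i75} as a Frobenius-type first order system in normal form and then argue via Taylor expansion at the origin. Since $\det\sigma_n\neq 0$, I would invert $\sigma_n$ in each of the $n-1$ equations to obtain
$$
\frac{\partial v}{\partial x_k} \;=\; M_k\,\frac{\partial v}{\partial x_n} + N_k\, v,\qquad k=1,\ldots,n-1,
$$
with $M_k=(\varepsilon_k/\varepsilon_n)\sigma_n^{-1}\sigma_k$ and $N_k=-i\varepsilon_k\sigma_n^{-1}\widetilde{\gamma}_{kn}$. Computing $\partial_\ell\partial_k v$ and $\partial_k\partial_\ell v$ from this formula and matching coefficients of $\partial_n^2 v$, $\partial_n v$ and $v$ shows that the commutation of mixed partials is equivalent to the three identities $[M_k,M_\ell]=0$, $M_kN_\ell+N_kM_\ell=M_\ell N_k+N_\ell M_k$, and $[N_k,N_\ell]=0$; a short calculation identifies them, up to a nonzero factor $\varepsilon_k\varepsilon_\ell/\varepsilon_n^2$, with Zolotarev's conditions \rf{a10}, \rf{a11}, \rf{12} (for $\widetilde{\gamma}_{kn}$ in place of $\gamma_{kn}$, which by the identities quoted after \rf{14} hold under the same hypotheses).

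Iterating the normal form, I would show by induction on $|\alpha'|$ with $\alpha=(\alpha',\alpha_n)$ that every mixed holomorphic partial $\partial^{|\alpha|}v/\partial z^{\alpha}$ equals a universal matrix-polynomial $P_\alpha(M_1,\ldots,N_{n-1})$ applied to the pure derivatives $\partial^j v/\partial z_n^j$ ($0\le j\le|\alpha|$) at the same point, the Frobenius/Zolotarev compatibility being precisely what makes the value of $P_\alpha$ independent of the order of differentiations. A routine bookkeeping then provides the operator-norm estimate $\|P_\alpha\|\le C_0^{|\alpha|}$, where $C_0$ depends only on $\|M_k\|$ and $\|N_k\|$.

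For part 1), the assumption $v(0,\ldots,0,x)=0$ for $x\in\mathbb{R}$, together with the entireness of $v$, gives $v(0,\ldots,0,z_n)\equiv 0$ on $\mathbb{C}$, hence $\partial^j v/\partial z_n^j(0,\ldots,0)=0$ for every $j\ge 0$. The recursion then forces $\partial^\alpha v(0,\ldots,0)=0$ for every multi-index $\alpha$, so the Taylor series of the entire function $v$ at the origin vanishes identically and $v\equiv 0$. For part 2), the hypothesis \rf{i77} (with constant $C$) in particular yields a bound $|d^k g/dx^k(0)|\le K C^k$, so $g$ extends to an entire function of exponential type. I would then define the candidate by the Taylor series
$$
v(z_1,\ldots,z_n) \;=\; \sum_{\alpha}\frac{z^{\alpha}}{\alpha!}\,P_{\alpha}\!\left(\tfrac{d^{\alpha_n+|\alpha'|}g}{dx^{\alpha_n+|\alpha'|}}(0),\ldots,\tfrac{dg}{dx}(0),g(0)\right),
$$
show absolute convergence on all of $\mathbb{C}^n$ using $\|P_\alpha\|\cdot|d^jg/dx^j(0)|\le K(C_0 C)^{|\alpha|}$, and verify that \rf{i75} is satisfied at the origin (hence everywhere, by analytic continuation) and that $v(0,\ldots,0,x)=g(x)$. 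Applying the same recursion to the difference $v_l-v$ and using the uniform-in-$k$ rate in \rf{i77} provides an $\alpha$-uniform Taylor-coefficient majorant that shrinks with $l$, giving $v_l\to v$ uniformly on compact subsets of $\mathbb{C}^n$ by a Vitali-type normal family argument.

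The main obstacle is the operator-norm estimate $\|P_\alpha\|\le C_0^{|\alpha|}$: one must single out a canonical expression for $P_\alpha$ (using the Frobenius consistency to eliminate redundant orderings) and control its norm finely enough that both the series defining $v$ and the differences for $v_l-v$ admit geometric majorants compatible with the growth rate $C$ furnished by \rf{i77}.
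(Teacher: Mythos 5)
The paper does not actually contain a proof of Theorem \ref{t3}: it is quoted from \cite{BAN7}, so there is no in-text argument to compare yours against. Judged on its own merits, your reduction of \rf{i75} to the normal form $\partial_k v=M_k\partial_nv+N_kv$ (possible since $\det\sigma_n\neq 0$), the identification of the mixed-partial compatibility with Zolotarev's conditions \rf{a10}, \rf{a11}, \rf{12} for the $\widetilde{\gamma}_{kn}$, and the recursion expressing every $\partial^\alpha v(0)$ through the pure derivatives $\partial_n^jv(0)$, $j\le|\alpha|$, with the bound $\sum_j\|P_{\alpha,j}\|\le(2C_1+1)^{|\alpha|}$, are all sound; part 1) follows exactly as you say, and in fact for part 1) you do not even need the Zolotarev conditions, since $v$ is an actual (entire) solution and its mixed partials commute automatically, so any order of differentiation yields a valid expression of $\partial^\alpha v(0)$ in terms of the $\partial_n^jv(0)$, all of which vanish.

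The genuine gap is in part 2), in the step ``the hypothesis \rf{i77} in particular yields $|d^kg/dx^k(0)|\le KC^k$, so $g$ extends to an entire function of exponential type.'' Condition \rf{i77} only controls the \emph{differences} $\partial_n^kv_l(0,\ldots,0)-g^{(k)}(0)$ by $\epsilon C^k$ uniformly in $k$ for large $l$; it gives no geometric bound on $\partial_n^kv_l(0,\ldots,0)$ for any fixed $l$ (an entire function such as $e^{z^2}$ has Taylor coefficients far exceeding any $KC^k/k!$), hence none on $g^{(k)}(0)$ either. Consequently the explicit Taylor series you write down for $v$ has no guaranteed majorant and your convergence argument for it does not go through as stated. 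The repair is to drop the direct construction and run your recursion on differences only: for $l,l'$ large, $|\partial_n^j(v_l-v_{l'})(0,\ldots,0)|\le 2\epsilon C^j$ by \rf{i77}, so the recursion gives $|\partial^\alpha(v_l-v_{l'})(0,\ldots,0)|\le 2\epsilon C_2^{|\alpha|}$ with $C_2=(2C_1+1)\max(1,C)$, whence $|v_l-v_{l'}|\le 2\epsilon e^{nC_2R}$ on every polydisc of radius $R$. Thus $\{v_l\}$ is uniformly Cauchy on compact subsets of $\mathbb{C}^n$; its limit $v$ is entire, satisfies \rf{i75} (derivatives converge under locally uniform convergence of holomorphic functions), and restricts to $g$ on the $x_n$-axis. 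This is essentially the ``difference'' argument you sketch at the very end, and it should be promoted to the definition of $v$; the exponential-type claim and the series construction should be deleted.
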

        \par
        \par
If the matrices $\sigma_1,\ldots,\sigma_n$, $\{\gamma_{kn}\}$ are selfadjoint $m\times m$ matrices, satisfying the conditions of V.A. Zolotarev \rf{a10}, \rf{a11}, \rf{12}, the matrices $\{\gamma_{ks}\}$ ($k,s=1,2,\ldots,n-1$) are defined by the equality \rf{14}, and the matrices
    $\{\widetilde{\gamma}_{ks}\}$ ($k,s=1,2,\ldots,n$)
are defined by \rf{6}, the Theorem \ref{t3} implies that the solution $v(x_1,\ldots,x_n)$ of the system \rf{i75} satisfies the system \rf{14-1}.
        \par
In the case when the selfadjoint operators $\{\sigma_k\}_1^n$ and $\{\gamma_{ks}\}$ satisfy the conditions \rf{a10}, \rf{a11}, \rf{12} and the operators $\{\widetilde{\gamma}_{ks}\}$ are defined by \rf{6}, then the system
        $$
        \left\{
        \ba{l}
        \sigma_k\left(-i\frac{1}{\varepsilon_s}\frac{\partial v}{\partial x_s}\right)-
        \sigma_s\left(-i\frac{1}{\varepsilon_k}\frac{\partial v}{\partial x_k}\right)+\widetilde{\gamma}_{sk}v=0
        \\
        v(0,\ldots,0,x)=g(x), \;\;\; x\in \mathbb{R}
        \ea
        \right.
        $$
    ($k,s=1,2,\ldots,n$), i.e. \rf{14-1} has a unique solution satisfying the condition $v(0,\ldots,0,x)=g(x)$, which is a restriction to $\mathbb{R}^n$ of an entire function on $\mathbb{C}^n$.
        \par
%
%
%
%
        \section{A generalized form of the Gelfand-Levitan-Marchenko equation and the Korteweg-de Vries equation}\la{s3}
        \par
In this section we will consider the connection between the Korteweg - de Vries equation and appropriate couples of commuting nonselfadjoint operators when one of them belongs to the wide class of couplings of dissipative and antidissipative operators with real spectra. We will apply this connection for deriving the generalized form of the Gelfand-Levitan-Marchenko equation of the inverse scattering problem. We will obtain the properties possessed by this integral equation, using the properties of the commuting nonselfadjoint operators.
        \par
At first we consider the case of two commuting bounded nonselfadjoint operators in a Hilbert space $H$ with finite dimensional imaginary parts, i.e $A_1=A$, $A_2=B$. Now the subspace $G=G_A+G_B$ (where $G_A=(A-A^*)H$, $G_B=(B-B^*)H$) is the non-Hermitian subspace of the pair $(A,B)$, $G_A$, $G_B$ are the non-Hermitian subspaces of $A$ and $B$ correspondingly. Let the operators $A$ and $B$ be embedded in a regular colligation
        \be\la{32-3}
        X=(A,B;H,\widetilde{\Phi},E;\sigma_A,\sigma_B,\gamma,\widetilde{\gamma})
        \ee
where
        \be\la{32-4}
        (A-A^*)/i=\widetilde{\Phi}^*\sigma_A\widetilde{\Phi}, \;\;\;\;\; (B-B^*)/i=\widetilde{\Phi}^*\sigma_B\widetilde{\Phi},
        \ee
        \be\la{32-5}
        \sigma_A\Phi B^*-\sigma_B\Phi A^*=\gamma\Phi,
        \ee
        \be\la{32-6}
        \sigma_A\widetilde{\Phi} B-\sigma_B\widetilde{\Phi} A=\widetilde{\gamma}\widetilde{\Phi},
        \ee
        \be\la{32-7}
        \widetilde{\gamma}-\gamma=i(\sigma_A\widetilde{\Phi}\widetilde{\Phi}^*\sigma_B-\sigma_B\widetilde{\Phi}\widetilde{\Phi}^*\sigma_A).
        \ee
Here $\widetilde{\Phi}:H\longrightarrow E$ is bounded linear operator, $\dim E<\infty$.
        \par
        If $\Phi H=E$ and $\ker\sigma_A\cap\ker\sigma_B=\{0\}$, the colligation is called a strict colligation. A colligation is said to be commutative if $AB=BA$. Strict commutative colligations are regular (see \cite{LIVSIC-LECTURENOTES,LIVSIC3}).
        \par
Here we consider the case of $\dim E<+\infty$ too.
        \par
        It has to be mentioned that every pair $(A,B)$ of commuting nonselfadjoint operators on $H$ with finite-dimensional imaginary parts can be always embedded in a commutative regular colligation \rf{32-3} (see, for example, \cite{JMAA}).
%
%
        \par
    To the given commutative regular colligation \rf{32-3} there corresponds the following generalized open system from the form \rf{7} for the operators $A$ and $B$ ($\varepsilon_1=\varepsilon$, $\varepsilon_2=\delta$, $\sigma_1=\sigma_A$, $\sigma_2=\sigma_B$)
        \be\la{32-8}
        \left\{
        \ba{l}
        i\frac{1}{\varepsilon}\frac{\partial}{\partial t}+Af=\widetilde{\Phi}^*\sigma_Au,
        \\
        i\frac{1}{\delta}\frac{\partial}{\partial x}f+Bf=\widetilde{\Phi}^*\sigma_Bu,
         \\
        v=u-i\widetilde{\Phi} f,
        \ea
        \right.
        \ee
where $\varepsilon$, $\delta$ are complex constant, the vector functions $u=u(x,t)$, $v=v(x,t)$ with values in $E$ and $f=f(x,t)$ with values in $H$ are the collective input, the collective output, and the collective state  correspondingly.
        \par
        For the commutative regular colligation $X$ the equations \rf{32-8} of open system are compatible if and only if the input $u=u(x,t)$ and the output $v=v(x,t)$ satisfy the following partial differential equations correspondingly (see Theorem 3.3, \cite{JMAA})
        \be\la{32-9}
        \sigma_B\left(-i \frac{1}{\varepsilon} \frac{\partial u}{\partial t}\right)
        -\sigma_A\left(-i \frac{1}{\delta} \frac{\partial u}{\partial x}\right)+\gamma u=0,
        \ee
        \be\la{32-10}
        \sigma_B\left(-i \frac{1}{\varepsilon} \frac{\partial v}{\partial t}\right)
        -\sigma_A\left(-i \frac{1}{\delta} \frac{\partial v}{\partial x}\right)+\widetilde{\gamma}v=0.
        \ee
The equations \rf{32-9}, \rf{32-10} are matrix wave equations. To the generalized open system \rf{32-8} there correspond the more general collective motions of the form
        \be\la{32-11}
        T(x,t)=e^{i(\varepsilon tA+\delta xB)}, \;\;\;\;
        {T^*}^{-1}(x,t)=e^{i(\overline{\varepsilon} tA^*+\overline{\delta} xB^*)}.
        \ee
It is evident that the vector function (or so-called open field, following the terminology of M.S. Liv\v sic) $f(x,t)=T(x,t)h$ ($h\in H$) satisfies the system \rf{32-8} with identically zero input and an arbitrary initial state $f(0,0)=h$ ($h\in H$).
        \par
        In the paper \cite{JMAA} using the connection between soliton theory and commuting nonselfadjoint operator theory
we obtain new scalar solutions of some nonlinear differential equations---the Schr\"{o}dinger equation, the Heisenberg equation, the Sine-Gordon equation, the Davey-Stewartson equation. These results are based on the generalized open systems and the corresponding matrix wave equations for appropriate pairs and triplets of commuting nonselfadjoint operators when one of them belongs to the larger class of nonselfadjoint nondissipative operators--couplings of dissipative and antidissipative operators with absolutely continuous real spectra (introduced and investigated by G.S. Borisova, K.P. kirchev in \cite{BAN2,IEOT1}). The preliminary results, concerning the application of the connection between the soliton theory and the commuting nonselfadjoint operator theory, are obtained in \cite{JMAA} in the case when one of the operators belongs to the mentioned above class of nondissipative operators. These preliminary results allow to expand the idea for solutions of the KdV equation (obtained by  M.S. Liv\v sic and Y. Avishai in \cite{LIVSIC-AVISHAI} for the dissipative operator $B$ with zero limit
        $
        \lim\limits_{x\to+\infty}(e^{ixB}f,e^{ixB}f)=0
        $
($f\in H$)) in the case of the considered larger class of nondissipative operators.
        \par
Let the operators $A$ and $B$ be commuting linear bounded nonselfadjoint operators in a separable Hilbert space $H$. Let us suppose that $A$ and $B$ satisfy the conditions:
        \par
        \textbf{(I)} the operators $A$ and $B$ have finite-dimensional imaginary parts (i.e. the so-called nonhermitian subspaces $G_A=(A-A^*)H$ and  $G_B=(B-B^*)H$ of the operators $A$ and $B$ are finite-dimensional subspaces);
        \par
        \textbf{(II)} the operator $B$ is a coupling of dissipative and antidissipative operators with absolutely continuous real spectra (and consequently, there exists $\lim\limits_{x\to+\infty}(e^{ixB}h,e^{ixB}h)\neq 0$ ($h\in H$), \cite{IEOT1}).
        \par
    Without loss of generality, we can assume that the operator $B$ is the triangular model \rf{21-1} when $\Delta=[-l,l]$ and $H=\textbf{L}^2(\Delta;\mathbb{C}^p)$. Let the operators $\Pi(x)$, $Q(x)$, $\widetilde{\Pi}(x)$, $L$, $\Phi$ be as in Section \ref{s2}.
        \par
In \cite{JMAA} (Theorem 2.1) it has been obtained that if a bounded linear operator
        $\rho:\textbf{L}^2(\Delta;\mathbb{C}^p)\longrightarrow \textbf{L}^2(\Delta;\mathbb{C}^p)$
commutes with the operator of multiplication with an independent variable in the space $\textbf{L}^2(\Delta;\mathbb{C}^p)$, then the operator $M$, defined in $\textbf{L}^2(\Delta;\mathbb{C}^p)$ by the equality
        \be\la{32-12}
        M=\int\limits_0^{\infty}e^{-ixB^*}\rho\frac{B-B^*}{i}e^{ixB}dx
        \ee
(as a strong limit), satisfies the relation
        \be\la{32-13}
        B^*M-MB=\rho(B^*-B)
        \ee
For the existence of the integral in \rf{32-12} and the equality \rf{32-13} we essentially use the existence and the explicit form of the limit
        $s-\lim\limits_{x\to+\infty}e^{-ixB^*}e^{ixB}=\widetilde{S}_+^*\widetilde{S}_+$,
which follows from \rf{21} and has been obtained in \cite{IEOT1}.
        \par
The next theorem which gives scalar solutions of the nonlinear KdV equation
        $$
        u_t+6u_x^2+u_{xxx}=0.
        $$
using couples of commuting linear bounded operators $(A,B)$
when the nondissipative operator $B$ is a coupling of dissipative and antidissipative operators with real absolutely continuous spectra. It has to be mentioned that analogous problem has been considered by M.S. Liv\v sic and Y. Avishai in \cite{LIVSIC-AVISHAI} in the case of dissipative operator $B$ with zero limit
        $
        s-\lim\limits_{x\to+\infty}e^{-ixB^*}e^{ixB}=0.
        $
        \begin{thm}\la{t4}
        Let the operator $B$ in a Hilbert space $H$ be a coupling of dissipative and antidissipative operators with absolutely continuous real spectra, let $B$  satisfy the condition
        \be\la{32-14}
        B^*=-UBU^* \;\;\;\;(U:H\longrightarrow H,\;\;U^*U=UU^*=I).
        \ee
        If $A=bB^{3}$ ($b\in\mathbb{R}$) and the operator function
        $
        \Gamma(x,t)=T(x,t)+M_1{T^*}^{-1}(x,t)M,
        $
        where $T(x,t)$ is the collective motion of the form \rf{32-11} with $\varepsilon=\delta=1$ and  $M_1$, $M$ are constant linear operators,
        then the solitonic combination
        \be\la{33}
        S(x,t)=\Gamma^{-1}(x,t)\Gamma_x(x,t)
        \ee
        satisfies the KdV equation
        \be\la{33-1}
        v=-2S_x(x,t), \;\;\;\;\; v_t-6vv_x+v_{xxx}=0.
        \ee
        If the operator $M$ has the form \rf{32-12} then
        $P_{G_B}S(x,t)\left|_{G_B}\right.$ is a scalar solution of the KdV equation \rf{33-1}, where $P_{G_B}$ is an orthogonal projector onto the nonhermitian subspace $G_B=(Im\;B)H$ of the operator $B$.
        \end{thm}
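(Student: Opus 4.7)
The plan is to follow the classical Marchenko dressing procedure, adapted to the operator-colligation setting. The starting point is a direct computation of the partial derivatives of $\Gamma(x,t)=T(x,t)+M_1{T^*}^{-1}(x,t)M$ using $T_x=iBT$, $T_t=iAT=ibB^3T$, and the analogous identities $({T^*}^{-1})_x=iB^*{T^*}^{-1}$, $({T^*}^{-1})_t=iA^*{T^*}^{-1}$. The symmetry hypothesis $B^*=-UBU^*$ combined with $A=bB^3$ and $b\in\mathbb{R}$ forces $A^*=-UAU^*$, hence $tA^*+xB^*=-U(tA+xB)U^*$ and therefore ${T^*}^{-1}(x,t)=U\,T(-x,-t)\,U^*$. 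Absorbing the unitaries into $\widetilde M_1=M_1U$ and $\widetilde M=U^*M$ rewrites
\begin{equation*}
 \Gamma(x,t)=T(x,t)+\widetilde M_1\,T(-x,-t)\,\widetilde M,
\end{equation*}
which is the exact operator analogue of the classical Gelfand--Levitan--Marchenko kernel built from a ``right-moving'' and a ``left-moving'' plane wave.

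The core of the argument is then to verify the Lax-type algebraic identities for the operator-valued function $S=\Gamma^{-1}\Gamma_x$. Concretely, I would compute $S_x, S_{xx}, S_{xxx}$ and $S_t$ from the derivative formulae for $\Gamma$ in the reduced form above, and show by direct substitution that $v=-2S_x$ satisfies $v_t-6vv_x+v_{xxx}=0$. The algebra is considerably simplified because $T(x,t)$ and $T(-x,-t)$ are entire functions of $B$ alone after the $U$-absorption, so powers of $B$ commute with them and the only nontrivial commutators are those of $B$ with $\widetilde M_1$ and $\widetilde M$. The role of the assumption $A=bB^3$ is to match the single $t$-derivative of $\Gamma$ with three $x$-derivatives, which is precisely why the third flow of the KdV hierarchy appears; a rescaling of $t$ absorbs the constant $b$.

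For the scalar statement I would use the explicit form \rf{32-12} of $M$ together with the intertwining \rf{32-13}, $B^*M-MB=\rho(B^*-B)$. This identity is the operator analogue of the symmetry property of the GLM kernel that makes the inverse-scattering reconstruction reduce to a finite-dimensional problem on the non-Hermitian subspace $G_B=(B-B^*)H$. Concretely, \rf{32-13} shows that $M$ maps $G_B^\perp$ into $G_B$ with controlled image, so the ``mixed'' contributions in $\Gamma^{-1}\Gamma_x$ that would be supported outside $G_B$ cancel, and the restriction $P_{G_B}S(x,t)\big|_{G_B}$ is well-defined on the finite-dimensional subspace $G_B$. The operator KdV identity established in the previous step then descends to this finite-dimensional reduction, and one recovers a scalar-valued solution of KdV.

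The main obstacle, in my view, lies in the second paragraph: even after the symmetry reduction one must carefully track every commutator of $B$ with $\widetilde M_1$ and $\widetilde M$ inside the nested inverse $\Gamma^{-1}$, and only after all the bookkeeping does the KdV equation for $v$ emerge from the identity $A=bB^3$. A secondary, analytic point is justifying the invertibility of $\Gamma(x,t)$ on the domain of interest; here the non-zero asymptotic $\lim_{x\to+\infty}(e^{ixB}f,e^{ixB}f)=(\widetilde S_+^*\widetilde S_+f,f)$ recalled in Section \ref{s2} ensures that $\Gamma(x,t)$ remains invertible for large $x$, and real-analyticity (or an explicit estimate using the triangular-model form \rf{21-1} of $B$) then extends invertibility to a connected open region in $(x,t)$.
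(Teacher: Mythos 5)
Your overall route is the one the paper follows: use $B^*=-UBU^*$ and the induced relation $A^*=-UAU^*$ to write ${T^*}^{-1}(x,t)=UT(-x,-t)U^*$, absorb the unitaries into $\widetilde M_1=M_1U$, $\widetilde M=U^*M$ so that $\Gamma$ takes the form \rf{34-1}, verify the operator KdV identity by direct differentiation (the paper itself only cites the analogous computations of Theorems 4.1, 5.1 and 6.4 of \cite{JMAA} for this step), and then invoke \rf{32-12}--\rf{32-13} for the scalar statement. Two places, however, contain genuine gaps rather than mere omitted bookkeeping.

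First, your mechanism for the scalar reduction is not the right one. The identity \rf{32-13} does not say that $M$ maps $G_B^\perp$ into $G_B$, and ``well-definedness'' of the compression $P_{G_B}S(x,t)|_{G_B}$ is never at issue --- any bounded operator can be compressed. What must be shown is that the compression of the \emph{nonlinear} term is the product of the compressions, i.e. $P_{G_B}(vv_x)|_{G_B}=\bigl(P_{G_B}v|_{G_B}\bigr)\bigl(P_{G_B}v_x|_{G_B}\bigr)$; only then does the operator equation descend to a KdV equation on $G_B$. The paper obtains this from \rf{36-1}: combining \rf{32-13} with $B^*=-UBU^*$ gives $\widetilde MB+B\widetilde M=\widetilde\rho\,(B-B^*)$, whence the representation \rf{36}, $S(x,t)=iB-i\Gamma^{-1}(x,t)\widetilde M_1T(-x,-t)\widetilde\rho\,(B-B^*)$. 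Every $x$-derivative of $S$ therefore carries the factor $(B-B^*)$ on the right and annihilates $G_B^\perp=\ker(B-B^*)$, so $vP_{G_B^\perp}=v_xP_{G_B^\perp}=0$ and the compression is multiplicative on $vv_x$. You need to state and use this; ``the mixed contributions cancel'' does not supply it. Second, the ``direct substitution'' step silently requires $\widetilde M_1$ to commute with $B$ (the paper in effect takes $\widetilde M_1=I$, i.e. $M_1=U^*$, before \rf{38}, and this is what makes \rf{40-0} hold): for an arbitrary constant $M_1$ one has $\Gamma_{xx}=-B^2T-\widetilde M_1B^2T(-x,-t)\widetilde M$, which is not $-B^2\Gamma$, and without the linear equation $\Gamma_{xx}+B^2\Gamma=0$ the Marchenko-type computation of $S_{xxx}$ and $S_t$ does not close up into the KdV identity. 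Make this normalization explicit in your argument.
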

        \par
The proof of Theorem \ref{t4} is based on the idea of the proofs of Theorem 4.1, Theorem 5.1, Theorem 6.4 in \cite{JMAA}, where the scalar solutions of the Schr\"{o}dinger equation, the Heisenberg equation, the Sine-Gordon equation, and the Davey-Stewartson are obtained, using the conection between soliton theory and the theory of commuting nonselfadjoint operators.
        \par
Without loss of generality we can assume that the operator $B$ in Theorem \ref{t4} is the triangular model of the form \rf{21-1} in the space $\textbf{L}^2(\Delta;\mathbb{C}^p)$ with $\triangle=[-l,l]$. The operator $A=bB^3$ satisfies the condition
        \be\la{33-2}
        A^*=-UAU^*.
        \ee
which follows from \rf{32-14}. Now we use the well-known representation of the imaginary part of the operator $B$ from \rf{21-1} in terms of channel elements
        \be\la{33-3}
        \frac{B-B^*}{i}f=\sum\limits_{\alpha,\beta=1}^m(f,\Phi^*e_{\alpha})(Le_{\alpha},e_{\beta})\Phi^*e_{\beta}=
        \sum\limits_{\alpha,\beta=1}^m(f,g_{\alpha})(Le_{\alpha},e_{\beta})g_{\beta},
        \ee
where $\{e_{\alpha}\}_1^m$ is a basis in $\mathbb{C}^m$ and  $g_{\alpha}=\Phi^*e_{\alpha}$, $\alpha=1,\ldots,m$, are the channel elements (here $\Phi$ is defined by \rf{20-2} in the case when $\Delta=[-l,l]$). Then we present the imaginary parts of the operators $A$, $B$, $AB^*$ in the following form:
%
%
%
        \be\la{33-4}
        \ba{c}
        \frac{A-A^*}{i}f=b\frac{B^3-B^*}{i}f=b\left(\frac{B^2(B-B^*)}{i}f+\frac{B(B-B^*)B^*}{i}f+\frac{(B-B^*){B^*}^2}{i}f\right)
        = \\ =
        b\left(\sum\limits_{\beta=1}^m(\sum\limits_{\alpha=1}^m(f,g_{\alpha})(Le_{\alpha},e_{\beta})B^2g_{\beta})+
        \sum\limits_{\beta=1}^m(\sum\limits_{\alpha=1}^m(f,Bg_{\alpha})(Le_{\alpha},e_{\beta})Bg_{\beta})\right.
        + \\ +
        \left.\sum\limits_{\beta=1}^m(\sum\limits_{\alpha=1}^m(f,B^2g_{\alpha})(Le_{\alpha},e_{\beta})g_{\beta})\right)
        = \\ =
        ((f,g_1),\ldots,(f,g_m),(f,Bg_1),\ldots,(f,Bg_m),(f,g_m),(f,B^2g_1),\ldots,(f,B^2g_m)).
        \\
        \left(
            \ba{ccc}
            0   &  0   &   bL \\
            0   &  bL  &   0 \\
            bL  &  0  &   0
            \ea
        \right)
        \left(
        \ba{c}
        g_1 \\ \cdots \\g_m \\ Bg_1 \\ \cdots \\ Bg_m \\ B^2g_1 \\ \cdots \\ B^2g_m
        \ea
        \right)
        =
        \widetilde{\Phi}^*\sigma_A\widetilde{\Phi}f,
        \ea
        \ee
        \be\la{33-5}
        \frac{B-B^*}{i}f
        =\Phi^*L\Phi f
        = \widetilde{\Phi}^*\sigma_B\widetilde{\Phi}f,
        \ee
        \be\la{33-6}
        \frac{AB^*-BA^*}{i}f=bB\frac{B^2-{B^*}^2}{i}B^*f
        = \widetilde{\Phi}^*\gamma\widetilde{\Phi}f.
        \ee
Here we have denoted the following $3m\times 3m$ complex matrices $\sigma_A$, $\sigma_B$, $\gamma$ and the operator $\widetilde{\Phi}$:
        \be\la{33-7}
        \ba{c}
        \sigma_A=
        \left(
        \ba{ccc}
        0   &  0   &   bL \\
        0   &  bL  &   0  \\
        bL  &  0   &   0
        \ea
        \right), \;\;\;\;\;
        \sigma_B=
        \left(
        \ba{ccc}
        L  &  0   &   0\\
        0  &  0   &   0\\
        0  &  0   &   0
        \ea
        \right),  \;\;\;\;\;
        \gamma=
        \left(
        \ba{ccc}
        0  &  0   &   0  \\
        0  &  0   &   bL \\
        0  &  bL  &   0
        \ea
        \right),
        \\
        \widetilde{\Phi}=(\Phi \;\;\; \Phi B^* \;\;\; \Phi {B^*}^2 ).
        \ea
        \ee
Now we embed the operators $A=bB^3$ and $B$ in the following commutatative regular two-operator colligation
        \be\la{33-8}
        X=(A=bB^3,B;H=\textbf{L}^2(\Delta;\mathbb{C}^p),\widetilde{\Phi},E=\mathbb{C}^{3m};
        \sigma_A,\sigma_B,\gamma,\widetilde{\gamma}),
        \ee
$\widetilde{\Phi}:H\longrightarrow \mathbb{C}^{3m}$ is determined in \rf{33-7}, i.e.
        $$
        \widetilde{\Phi}f=((f,g_1),\ldots,(f,g_m),(f,Bg_1),\ldots,(f,Bg_m),(f,g_m),(f,B^2g_1),\ldots,(f,B^2g_m)).
        $$
        \par
We consider the corresponding open system \rf{32-8} in the case when $\varepsilon=\delta=1$. The corresponding collective motions \rf{32-11} have the form
        \be\la{33-9}
        T(x,t)=e^{i(btB^3+xB)}, \;\;\;\;\; {T^*}^{-1}(x,t)=e^{i(tA^*+xB^*)}=e^{i(bt{B^*}^3+xB^*)}=U^*T(-x,-t)U,
        \ee
using the conditions \rf{32-14} and \rf{33-2}. In the coure of proving Theorem \ref{t4} we consider the solitonic combination $S(x,t)$ from the form \rf{33},
where we have denoted the operator valued function
        \be\la{34}
        \Gamma(x,t)=T(x,t)+M_1{T^*}^{-1}(x,t)M,
        \ee
and $M_1$ and $M$ are constant bounded linear operators.
From the conditions \rf{32-14}, \rf{33-2}, \rf{33-9} it follows that \rf{34} takes the form
        \be\la{34-1}
        \Gamma(x,t)=T(x,t)+\widetilde{M}_1T(-x,-t)\widetilde{M}  \;\;\;\;\; (\widetilde{M}_1=M_1U, \;\;\; \widetilde{M}=U^*M).
        \ee
%
%
Theorem \ref{t4} shows that the solitonic combination $S(x,t)$ from \rf{33} is a solution of the KdV equation \rf{33-1}. If the operator $M$ has the form \rf{32-12} then the solitonic combination $S(x,t)$ takes the form
        \be\la{36}
        \ba{c}
        S(x,t)=\Gamma^{-1}(x,t)\Gamma_x(x,t)=iB-i\Gamma^{-1}(x,t)\widetilde{M_1}T(-x,-t)(B\widetilde{M}+\widetilde{M}B)
        = \\ =
        iB-i\Gamma^{-1}(x,t)\widetilde{M_1}T(-x,-t)\widetilde{\rho}(B-B^*),
        \ea
        \ee
where we have used the denotation $\widetilde{\rho}=U^*\rho$ and the representation
        \be\la{36-1}
        \ba{c}
        \widetilde{M}B+B\widetilde{M}=U^*MB-U^*B^*UU^*M=U^*(MB-B^*M)
        = \\ =
        U^*\rho(B-B^*)=\widetilde{\rho}(B-B^*).
        \ea
        \ee
        \par
Let us denote the vector function
        \be\la{37}
        h(x,t)=S(x,t)g-iBg, \;\;\;\;\; g\in G_B
        \ee
for $S(x,t)$ from the relation \rf{36}. In the case, when $\widetilde{M}_1=I$ (i.e. $M_1=U^*$) the equality \rf{37} takes the form
        \be\la{38}
        h(x,t)=-T(-2x,-2t)\widetilde{M}h(x,t)-iT(-2x,-2t)\widetilde{\rho}(B-B^*)g.
        \ee
        \par
Let $\widehat{H}$ be the principal subspace of the operator $B$, i.e.
        $
        \widehat{H}=\overline{\textit{span}}\;\{B^p\widetilde{\Phi}^*E, p\in \mathbb{N}\cup\{0\}\}.
        $
It has to be mentioned that in the considered case when $A=bB^3$ the principal subspace \rf{22} of the couple $(A,B)$ from the colligation \rf{33-8} coincides with the principal subspace of the operator $B$. Let us denote the set
        $
        \widetilde{H}=\left\{\widetilde{\Phi}e^{i\xi B}h,h\in\widehat{H}\right\}
        $
and $v_h(\xi)=\widetilde{\Phi} e^{i\xi B}h$. The next theorem determines a scalar product in the space $\widetilde{H}$.
        \begin{thm}\la{t5}
        The relation
        \be\la{38-1}
        \ba{c}
        \langle v_h(\xi),v_g(\xi)\rangle=\lim\limits_{\xi\to\infty}(e^{i\xi B}h,e^{i\xi B}g)+
        \int\limits_0^{\infty}(\sigma_B v_h(\xi),v_g(\xi))d\xi
        = \\ =
        (\widetilde{S}_+h,\widetilde{S}_+g)+\int\limits_0^{\infty}(\sigma_B v_h(\xi),v_g(\xi))d\xi
        \ea
        \ee
defines a scalar product in $\widetilde{H}$ and the operator
        $
        \widetilde{U} :\widehat{H}\longrightarrow\widetilde{H},
        $
which is defined by the equality
        $\widetilde{U} h=\widetilde{\Phi} e^{i\xi B}h=v_h(\xi)$,
is an isometric one.
        \end{thm}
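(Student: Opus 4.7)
The strategy is to show that the right hand side of \rf{38-1} equals the ordinary inner product $(h,g)$ on the principal subspace $\widehat{H}$; once this master identity is established, every assertion of the theorem---positivity, well-definedness on $\widetilde{H}$, injectivity of $\widetilde{U}$, and the isometry property---follows at once. This is the two-variable analogue, for the couple $(A=bB^3,B)$ and with $v_h(\xi)=\widetilde{\Phi}e^{i\xi B}h$, of Theorem \ref{t1}.

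The starting point is the colligation identity $(B-B^*)/i=\widetilde{\Phi}^*\sigma_B\widetilde{\Phi}$ coming from \rf{33-5}. Differentiating $(e^{i\xi B}h,e^{i\xi B}g)$ in $\xi$ and substituting this identity yields
\begin{equation*}
\frac{d}{d\xi}(e^{i\xi B}h,e^{i\xi B}g)=i((B-B^*)e^{i\xi B}h,e^{i\xi B}g)=-(\sigma_B v_h(\xi),v_g(\xi)).
\end{equation*}
Integrating from $0$ to $N$ and using $v_h(0)=\widetilde{\Phi}h$ gives
\begin{equation*}
\int_0^N(\sigma_B v_h(\xi),v_g(\xi))\,d\xi=(h,g)-(e^{iNB}h,e^{iNB}g).
\end{equation*}
Because $B$ is a coupling of dissipative and antidissipative operators with real absolutely continuous spectra, the limit $\lim_{N\to\infty}(e^{iNB}h,e^{iNB}g)=(\widetilde{S}_+^*\widetilde{S}_+h,g)=(\widetilde{S}_+h,\widetilde{S}_+g)$ exists in view of \rf{21} and the explicit form \rf{21-2} of $\widetilde{S}_+$ obtained in \cite{IEOT1}. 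Passing to $N\to\infty$ therefore yields
\begin{equation*}
\int_0^\infty(\sigma_B v_h(\xi),v_g(\xi))\,d\xi=(h,g)-(\widetilde{S}_+h,\widetilde{S}_+g),
\end{equation*}
and consequently $\langle v_h,v_g\rangle=(h,g)$ for all $h,g\in\widehat{H}$.

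All claims of the theorem now follow mechanically. Sesquilinearity is inherited from $(\cdot,\cdot)$, while positivity and nondegeneracy reduce to $\langle v_h,v_h\rangle=(h,h)\geq 0$ with equality only when $h=0$; in particular, if $\widetilde{U}h=v_h=0$ then $(h,h)=0$, so $h=0$, which simultaneously proves that $\widetilde{U}$ is injective and that the form \rf{38-1} descends to a genuine scalar product on $\widetilde{H}$. The isometry statement is just a reformulation of the master identity, $\langle\widetilde{U}h,\widetilde{U}g\rangle=\langle v_h,v_g\rangle=(h,g)$. The only delicate step in the argument is the passage to the limit in $N$, and this is precisely where the hypothesis on $B$ is decisive: the dissipative case with zero limit treated by Liv\v sic and Avishai in \cite{LIVSIC-AVISHAI} trivializes the formula, whereas here the nonzero asymptotic contribution $(\widetilde{S}_+h,\widetilde{S}_+g)$ is essential to balance the extra term in \rf{38-1}.
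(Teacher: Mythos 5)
Your proof is correct and follows the paper's own route: the paper likewise derives $\frac{d}{d\xi}\left(e^{i\xi B}h,e^{i\xi B}g\right)=-\left(\widetilde{\Phi}^*\sigma_B\widetilde{\Phi}e^{i\xi B}h,e^{i\xi B}g\right)$ from the colligation identity \rf{33-5}, invokes the existence and form of the limit \rf{21} obtained in \cite{IEOT1}, and concludes $\langle v_h,v_g\rangle=(h,g)$, i.e. $\widetilde{U}^*\widetilde{U}=I$. The only difference is that you spell out the integration over $[0,N]$ and the passage $N\to\infty$ explicitly, which the paper leaves implicit by referring back to the proof of Theorem \ref{t1}.
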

        \par
The proof of Theorem \ref{t5} is analogous to the proof of Theorem \ref{t1} and it follows from the existence and the form of the limit \rf{21}
obtained in \cite{IEOT1} when the operator $B$ is the triangular model \rf{21-1} of couplings of dissipative and antidissipative operators.
        \par
On the other hand the relation \rf{38-1} and the equality
        $$
        \frac{d}{d\xi}\left(e^{i\xi B}h,e^{i\xi B}g\right)=
        -\left(\frac{B-B^*}{i}e^{i\xi B}h,e^{i\xi B}g\right)=
        -(\widetilde{\Phi}^*\sigma_B\widetilde{\Phi}e^{i\xi B}h,e^{i\xi B}g)
        $$
implies also that
        $$
        \langle v_h(\xi),v_g(\xi)\rangle=\langle \widetilde{U}h,\widetilde{U}g\rangle=(\widetilde{U}^*\widetilde{U}h,g)=(h,g),
        $$
i.e. $\widetilde{U}^*\widetilde{U}=I$, i.e. $\widetilde{U}$ is an isometric operator onto $\widehat{H}$.
        \par
        Following M.S. Liv\v sic \cite{LIVSIC-MAPPING} $v_h(\xi)$ is a mode of the element $h$.
        \begin{thm}\la{t6}
Let the operator $B$ in a Hilbert space $H$ be a coupling of dissipative and antidissipative operators with absolutely continuous real spectra. Let $B$ satisfy the condition
        $$
        B^*=-UBU^* \;\;\;\;(U:H\longrightarrow H,\;\;U^*U=UU^*=I),
        $$
and let the operators $A=bB^3$ and $B$ be embedded in the regular colligation
        \be\la{38-2-0}
        X=(A=bB^3,B;H=\textbf{L}^2(\Delta;\mathbb{C}^p),\widetilde{\Phi},E=\mathbb{C}^{3m};
        \sigma_A,\sigma_B,\gamma,\widetilde{\gamma}),
        \ee
where $\widetilde{\Phi}$, $\sigma_A$, $\sigma_B$, $\gamma$ are defined by \rf{33-7}. Then the mode $v_h(\xi)=\widetilde{\Phi}e^{i\xi B}h(x,t)$, where $h$ has the form \rf{37}, satisfies the integral equation
        \be\la{38-2}
        v_{h(x,t)}(\xi)=-\int\limits_0^{\infty}\Psi(-2x+\xi+\eta,-2t)v_h(\eta)d\eta
        +\Psi(-2x+\xi,-2t)v_g(0).
        \ee
where
        \be\la{38-3}
        \Psi(x,t)=\widetilde{\Phi} T(x,t)\widetilde{\rho}\widetilde{\Phi}^*\sigma_B.
        \ee
        \end{thm}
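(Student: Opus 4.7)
The strategy is to apply $\widetilde{\Phi}e^{i\xi B}$ to both sides of the fixed-point relation \rf{38} for $h(x,t)$ and identify each resulting term with the corresponding piece of \rf{38-2}. The left-hand side is $\widetilde{\Phi}e^{i\xi B}h(x,t)=v_{h(x,t)}(\xi)$ by definition of the mode, while on the right-hand side the commutativity of $B$ with $B^{3}$ gives
$$
e^{i\xi B}T(-2x,-2t)=e^{i((-2x+\xi)B-2btB^{3})}=T(-2x+\xi,-2t),
$$
so that $\widetilde{\Phi}e^{i\xi B}$ effectively shifts the spatial argument of $T$ by $\xi$.

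Handling the inhomogeneous piece is the easy half. After the shift, $-iT(-2x,-2t)\widetilde{\rho}(B-B^{*})g$ becomes $-i\widetilde{\Phi}T(-2x+\xi,-2t)\widetilde{\rho}(B-B^{*})g$, and the colligation relation $B-B^{*}=i\widetilde{\Phi}^{*}\sigma_{B}\widetilde{\Phi}$ from \rf{33-5} converts this into $\widetilde{\Phi}T(-2x+\xi,-2t)\widetilde{\rho}\widetilde{\Phi}^{*}\sigma_{B}\widetilde{\Phi}g=\Psi(-2x+\xi,-2t)v_{g}(0)$, matching the free term of \rf{38-2}.

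For the kernel piece $-\widetilde{\Phi}T(-2x+\xi,-2t)\widetilde{M}h(x,t)$, I would expand $\widetilde{M}=U^{*}M$ using the explicit formula \rf{32-12}. The symmetry $B^{*}=-UBU^{*}$ implies $U^{*}e^{-i\eta B^{*}}=e^{i\eta B}U^{*}$, so that
$$
\widetilde{M}=\int_{0}^{\infty}e^{i\eta B}\widetilde{\rho}\,\frac{B-B^{*}}{i}\,e^{i\eta B}\,d\eta
$$
as a strong limit. Pulling the prefactor $T(-2x+\xi,-2t)$ inside the integral and using $T(-2x+\xi,-2t)e^{i\eta B}=T(-2x+\xi+\eta,-2t)$, then replacing $(B-B^{*})/i$ by $\widetilde{\Phi}^{*}\sigma_{B}\widetilde{\Phi}$ once more, the kernel term becomes
$$
-\int_{0}^{\infty}\widetilde{\Phi}T(-2x+\xi+\eta,-2t)\widetilde{\rho}\widetilde{\Phi}^{*}\sigma_{B}\widetilde{\Phi}e^{i\eta B}h(x,t)\,d\eta=-\int_{0}^{\infty}\Psi(-2x+\xi+\eta,-2t)v_{h(x,t)}(\eta)\,d\eta,
$$
which is exactly the integral term of \rf{38-2}. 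Combining the two pieces yields the desired identity.

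The main technical obstacle is the rigorous justification of the interchange of $\widetilde{\Phi}T(-2x+\xi,-2t)$ with the strong-limit integral defining $M$, and the verification that the resulting $\mathbb{C}^{3m}$-valued integral converges. Both issues reduce to the same input used to define $M$ in \rf{32-12}: the existence and explicit form of the wave-operator limit $s\text{-}\lim_{\eta\to\infty}e^{-i\eta B^{*}}e^{i\eta B}=\widetilde{S}_{+}^{*}\widetilde{S}_{+}$ from \rf{21}, combined with the finite-rank factorization $(B-B^{*})/i=\widetilde{\Phi}^{*}\sigma_{B}\widetilde{\Phi}$. This is precisely where the assumption that $B$ is a coupling of dissipative and antidissipative operators with real absolutely continuous spectra enters; once it is invoked, the convergence of the kernel integral and the commutation of $\widetilde{\Phi}T(-2x+\xi,-2t)$ with the limit in $\eta$ follow by standard bounded-operator arguments.
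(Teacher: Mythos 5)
Your proposal is correct and follows essentially the same route as the paper: the paper's proof likewise rewrites $\widetilde{M}=U^{*}M$ via \rf{32-12} and the symmetry $B^{*}=-UBU^{*}$ to get $\widetilde{M}=\int_{0}^{\infty}e^{i\eta B}\widetilde{\rho}\widetilde{\Phi}^{*}\sigma_{B}\widetilde{\Phi}e^{i\eta B}d\eta$, then applies $\widetilde{\Phi}e^{i\xi B}$ to the fixed-point relation \rf{38}, merges the exponentials, and reads off the two terms of \rf{38-2}. Your added remark on justifying the interchange of the bounded prefactor with the strong-limit integral via \rf{21} matches the paper's own appeal to the existence of the wave-operator limit.
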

        \begin{proof}
Let us consider $h(x,t)$ from the equality \rf{37}, where $S(x,t)$ is a solitonic combination which is a solution of the KdV equation (see Theorem \ref{t4}). From the form \rf{32-12} of the operator $M$ and the condition \rf{32-14}, satisfied by the operator $B$, it follows that $\widetilde{M}$ takes the form
        \be\la{39}
        \ba{c}
        \widetilde{M}=U^*M=\int\limits_0^{\infty}U^*e^{-i\eta B^*}\rho\frac{B-B^*}{i}e^{i\eta B}d\eta
        =
        \int\limits_0^{\infty}U^*e^{-i\eta B^*}UU^*\rho\frac{B-B^*}{i}e^{i\eta B}d\eta
        = \\=
        \int\limits_0^{\infty}e^{i\eta B}\widetilde{\rho}\frac{B-B^*}{i}e^{i\eta B}d\eta
        =
        \int\limits_0^{\infty}e^{i\eta B}\widetilde{\rho}\widetilde{\Phi}^*\sigma_B\widetilde{\Phi} e^{i\eta B}d\eta,
        \ea
        \ee
where $\widetilde{\rho}=U^*\rho$. Now from \rf{38} and \rf{39} we obtain consecutively
        \be\la{40}
        \ba{c}
        v_{h(x,t)}(\xi)=\widetilde{\Phi} e^{i\xi B}h(x,t)
        =-\widetilde{\Phi} e^{i\xi B}T(-2x,-2t)
        \int\limits_0^{\infty}e^{i\eta B}\widetilde{\rho}\widetilde{\Phi}^*\sigma_B\widetilde{\Phi} e^{i\eta B}h(x,t)d\eta
        + \\ +
        \widetilde{\Phi} e^{i\xi B}T(-2x,-2t)\widetilde{\rho}\widetilde{\Phi}^*\sigma_B\widetilde{\Phi} g
        = \\ =
        -\int\limits_0^{\infty}\widetilde{\Phi} T(-2x+\xi+\eta,-2t)\widetilde{\rho}\widetilde{\Phi}^*\sigma_B\widetilde{\Phi} e^{i\eta B}h(x,t)d\eta
        + \\ +
        \widetilde{\Phi} T(-2x+\xi,-2t)\widetilde{\rho}\widetilde{\Phi}^*\sigma_B\widetilde{\Phi} g
        = \\ =
        -\int\limits_0^{\infty}\widetilde{\Phi} T(-2x+\xi+\eta,-2t)\widetilde{\rho}\widetilde{\Phi}^*\sigma_Bv_h(\eta)d\eta
        +
        \widetilde{\Phi} T(-2x+\xi,-2t)\widetilde{\rho}\widetilde{\Phi}^*\sigma_B v_g(0).
        \ea
        \ee
Consequently using the denotation \rf{38-3} we obtain that $v_h(\xi)$ satisfies the integral equation \rf{38-2}. The proof is complete.
        \end{proof}
        \par
The integral equation \rf{38-2} is the generalized Gelfand-Levitan-Marchenko equation, i.e. $v_h(\xi)$ satisfies a differential equation, $\Psi(x,t)$ can be presented as a characteristic operator function of the operator $A$ (which is the scattering function) and $\Psi(x,t)g$ can be presented as a Fourier transform of the function from the space $\mathbf{L}^2(\Delta;\mathbb{C}^m)$. More precisely we will obtain that the integral equation \rf{38-2} possesses the next properties:
        \par
        \textbf{(a)} the operator function $\Psi(x,t)$ can be presented as a characteristic operator function of the operator $A$ and  $\Psi(x,t)$  can be presented with the help of complete characteristic operator function of the colligation $X$ from \rf{33-8};
        \par
        \textbf{(b)} the vector function $\Psi(x,t)g$ is a Fourier transform of a function from the Hilbert space $\mathbf{L}^2(\Delta;\mathbb{C}^m)$;
        \par
        \textbf{(c)}  the vector function $v_h(\xi)$ is the so-called mode of $h$ which corresponds to the output representation $v_h(\xi,\tau)=\widetilde{\Phi}e^{i(\tau A+\xi B)}h$  of the element $h$ of the open system \rf{32-8} with $\varepsilon=\delta=1$ and $v_h(\xi,\tau)$ satisfies an appropriate matrix wave equation from the form \rf{32-10}, i.e
        \be\la{40-000}
        \sigma_B\left(-i \frac{\partial v}{\partial \tau}\right)
        -\sigma_A\left(-i \frac{\partial v}{\partial \xi}\right)+\widetilde{\gamma}v=0.
        \ee
From Theorem 2.4, \cite{LIVSIC-AT-AL} in the case of this wave equation it follows that the mode $v_h(\xi)$ determines the output $v(\xi,\tau)$ uniquely as a solution of the equation \rf{40-000}, satisfying the condition
        $
        v(\xi,0)=v_h(\xi)
        $
 in the case when the operator $B$ is a coupling of dissipative and antidissipative operators with real spectra which ensures the the existence of the limit
        $
        s-\lim\limits_{x\to+\infty}e^{-ixB^*}e^{ixB}.
        $
        \par
 In particular in Section \ref{s6}, Section \ref{s7}, Section \ref{s8}, Section \ref{s9} we will obtain what kind of differential equations are satisfied by the components of the mode $v_h(\xi)$ in the case when the open systems correspond to the KdV equation and the Schr\"{o}dinger equation.
        \par
 At first we will mention that the isometric operator $\widetilde{U}:\widehat{H}\longrightarrow\widetilde{H}\subset E$ maps $h(x,t)\in H=\textbf{L}^2(\Delta;\mathbb{C}^p)$ in a finite dimensional subspace.
 It also has to mention that  $\Gamma(x,t)$ from \rf{34} (when $M_1=U^*$ and $T(x,t)$, ${T^*}^{-1}(x,t)$ have the form \rf{33-9}) satisfies the equation
        \be\la{40-0}
        \frac{\partial^2}{\partial x^2}\Gamma(x,t)+B^2\Gamma(x,t)=0.
        \ee
        \par
It turns out that the integral equation \rf{38-2} can be presented in terms of the complete characteristic function of the pair $(A,B)$, i.e. there exists a connection between the generalized Gelfand-Levitan-Marchenko equation and the complete characteristic function of the operator colligation $X$.
        \par
Let the operators $A$, $B$, the colligation $X$ be like above stated. To avoid comlications of writing without loss of generality we consider the case when  $\widetilde{\rho}
=I$.
        \begin{thm}\la{t7}
        The matrix function $\Psi(x,t)$, defined by the equality \rf{38-3}, has the representation
        \be\la{40-1}
        \Psi(x,t)\widetilde{g}=\frac{1}{2\pi}\int\limits_{|\mu|=r}e^{i\mu}W(t,x,\mu)d\mu\widehat{g},
        \ee
where
        \be\la{40-2}
        W(t,x,\mu)=I+i\Phi(tA+xB-\mu I)^{-1}\Phi^*(t\sigma_A+x\sigma_B)
        \ee
is the complete characteristic function of the colligation $X$ from \rf{33-8}, $r>a\sqrt{|t|^2+|x|^2}$,
        $\widehat{g}=(x\sigma_B+t\sigma_A)^{-1}\sigma_B\widetilde{g}$, $\widetilde{g}\in G_B$
and the generalized Gelfand-Levitan-Marchenko equation has the form (in the case $\widetilde{\rho}=I$)
        \be\la{45-1}
        \ba{c}
        v_h(\xi)=\widetilde{\Phi}e^{i\xi B}h(x,t)
        =-\frac{1}{2\pi}\int\limits_0^{\infty}
        (\int\limits_{|\mu|=r}e^{i\mu}W(-2t,-2x+\xi+\eta,\mu)d\mu
        . \\ .
        \left.
       ((-2t)\sigma_A+(-2x+\xi+\eta)\sigma_B)^{-1}\sigma_Bv_h(\eta)\right)d\eta
        + \\ +
        \frac{1}{2\pi}\int\limits_{|\mu|=r}e^{i\mu}W(-2t,-2x+\xi,\mu)d\mu((-2t)\sigma_A+(-2x+\xi)\sigma_B)^{-1}\sigma_Bv_g(0).
        \ea
        \ee
        \end{thm}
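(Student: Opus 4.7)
The plan is to express $\Psi(x,t)$ via the Riesz--Dunford functional calculus as a contour integral of the resolvent $(tA+xB-\mu I)^{-1}$ and then rewrite that integral in terms of the complete characteristic function $W$. Since $\widetilde{\rho}=I$, the definition \rf{38-3} reads $\Psi(x,t)\widetilde{g}=\widetilde{\Phi}\,T(x,t)\,\widetilde{\Phi}^*\sigma_B\widetilde{g}$ with $T(x,t)=e^{i(tA+xB)}$. The operator $tA+xB$ is bounded and $\|tA+xB\|\le a\sqrt{|t|^2+|x|^2}$ for a single constant $a>0$ (e.g.\ $a=\sqrt{\|A\|^2+\|B\|^2}$), so for every $r>a\sqrt{|t|^2+|x|^2}$ the functional calculus yields
\[
T(x,t)=-\frac{1}{2\pi i}\int_{|\mu|=r}e^{i\mu}(tA+xB-\mu I)^{-1}\,d\mu,
\]
and consequently
\[
\Psi(x,t)\widetilde{g}=-\frac{1}{2\pi i}\int_{|\mu|=r}e^{i\mu}\,\widetilde{\Phi}(tA+xB-\mu I)^{-1}\widetilde{\Phi}^*\sigma_B\widetilde{g}\,d\mu.
\]

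Next I would exploit the defining relation $(t\sigma_A+x\sigma_B)\widehat{g}=\sigma_B\widetilde{g}$, which is immediate from $\widehat{g}=(x\sigma_B+t\sigma_A)^{-1}\sigma_B\widetilde{g}$. Combined with \rf{40-2} it gives the pointwise identity
\[
W(t,x,\mu)\widehat{g}=\widehat{g}+i\,\widetilde{\Phi}(tA+xB-\mu I)^{-1}\widetilde{\Phi}^*\sigma_B\widetilde{g}.
\]
Integrating this against $e^{i\mu}$ along $|\mu|=r$ and invoking $\int_{|\mu|=r}e^{i\mu}d\mu=0$ (since $e^{i\mu}$ is entire) kills the constant term $\widehat{g}$, leaving
\[
\int_{|\mu|=r}e^{i\mu}W(t,x,\mu)\,d\mu\,\widehat{g}=i\int_{|\mu|=r}e^{i\mu}\widetilde{\Phi}(tA+xB-\mu I)^{-1}\widetilde{\Phi}^*\sigma_B\widetilde{g}\,d\mu.
\]
Comparing with the contour-integral expression for $\Psi(x,t)\widetilde{g}$ above and simplifying the numerical prefactor $-\tfrac{1}{2\pi i}\cdot\tfrac{1}{i}=\tfrac{1}{2\pi}$ yields \rf{40-1} exactly.

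Once \rf{40-1} is established, the explicit form \rf{45-1} of the generalized Gelfand--Levitan--Marchenko equation is obtained by direct substitution into \rf{38-2} (taken with $\widetilde{\rho}=I$). In the integrand of \rf{38-2} one applies \rf{40-1} at the point $(-2x+\xi+\eta,-2t)$ with $\widetilde{g}=v_h(\eta)$, producing $\widehat{g}=\bigl((-2t)\sigma_A+(-2x+\xi+\eta)\sigma_B\bigr)^{-1}\sigma_B v_h(\eta)$; in the boundary term one applies \rf{40-1} at $(-2x+\xi,-2t)$ with $\widetilde{g}=v_g(0)$, producing the analogous $\widehat{g}$ with $\eta=0$ and $v_g(0)$ in place of $v_h(\eta)$. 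Substituting these two expressions into the respective terms of \rf{38-2} reproduces \rf{45-1} verbatim.

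The main technical point I expect is the invertibility of the matrix $t\sigma_A+x\sigma_B$ that is needed for $\widehat{g}$ to be well defined. With the block structure \rf{33-7}, a direct block computation of $\det(t\sigma_A+x\sigma_B)$ shows that it equals a nonzero scalar multiple of $t^{3m}(\det L)^3$, so the inverse exists for every $t\neq 0$; since \rf{45-1} is applied in the time-evolutionary KdV regime with $-2t\neq 0$, this is sufficient and no limiting argument is required along the computation. The only other routine point, which I would mention but not belabor, is the uniform operator-norm bound that justifies using a single radius $r$ throughout the contour-integral manipulation.
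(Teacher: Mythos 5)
Your proposal is correct and follows essentially the same route as the paper's own proof: insert the factor $(x\sigma_B+t\sigma_A)(x\sigma_B+t\sigma_A)^{-1}$, represent $e^{i(tA+xB)}$ by the Riesz--Dunford contour integral, observe that the identity part of $W$ integrates to zero against $e^{i\mu}$, and then substitute into \rf{38-2}. The only addition is your explicit check that $\det(t\sigma_A+x\sigma_B)$ is a nonzero multiple of $t^{3m}(\det L)^3$, which the paper leaves implicit; that computation is correct and harmless.
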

        \begin{proof}
Let us consider $\Psi(x,t)$ using \rf{38-3} for $\widetilde{g}\in G_B$
        \be\la{46}
        \ba{c}
        \Psi(x,t)\widetilde{g}=\widetilde{\Phi}T(x,t)\widetilde{\Phi}^*\sigma_B\widetilde{g}
        = \\ =
        \widetilde{\Phi}e^{i(tA+xB)}\widetilde{\Phi}^*(x\sigma_B+t\sigma_A)(x\sigma_B+t\sigma_A)^{-1}\sigma_B\widetilde{g}
        = \\ =
        \widetilde{\Phi}e^{i(tA+xB)}\widetilde{\Phi}^*(x\sigma_B+t\sigma_A)\widehat{g}(w)
        = \\ =
        \widetilde{\Phi}\left(-\frac{1}{2\pi i}\int\limits_{|\mu|=r}e^{i\mu}(tA+xB-\mu I)^{-1}d\mu
        \widetilde{\Phi}^*(t\sigma_A+x\sigma_B)\widehat{g}\right)
        = \\ =
        \frac{1}{2\pi}\int\limits_{|\mu|=r}e^{i\mu}(W(t,x,\mu)-I)d\mu\widehat{g}=
        \frac{1}{2\pi}\int\limits_{|\mu|=r}e^{i\mu}W(t,x,\mu)d\mu\widehat{g}
        \ea
        \ee
where $W(t,x,\mu)=I+i\widetilde{\Phi}(tA+xB-\mu I)^{-1}\widetilde{\Phi}^*(x\sigma_B+t\sigma_A)$ is the complete characteristic function of the operator colligation $X$ from \rf{38-2-0} (of the couple $(A,B)$, embedded in the colligation $X$ from \rf{38-2-0}). In the equalities \rf{46} we have used the Dunford integral for the bounded linear operator $tA+xB$. Now the representation \rf{45-1} follows immediately from \rf{46}. The proof is complete.
        \end{proof}
It has to be mentioned that there exists a similar representation of $\Psi(x,t)$ in the case when $\widetilde{\rho}\neq I$ using the equality $\Pi(w)Q(w)=I$, where the matrix function $Q(x)$ is stated as in Section \ref{s2}.
%
%
        \par
The next theorem presents the matrix function $\Psi(x,t)$, defined by the equality \rf{38-3} in other form---with the help of the characterisic operator function of the operator $A$, i.e. in terms of the transfer function (or the scattering function).
        \begin{thm}\la{t7-1}
        The matrix function $\Psi(x,t)$, defined by the equality \rf{38-3}, has the representation
        $$
        \Psi(x,t)\widetilde{g}=\frac{1}{2\pi}\int\limits_{|\mu|=r}e^{it\mu}W_A(\mu)d\mu\widehat{g}(x),
        $$
where
        $
        W_A(\mu)=I+I\widetilde{\Phi}(A-\mu I)^{-1}\widetilde{\Phi}^*\sigma_A
        $
is the characteristic operator function of the operator $A=bB^3$ with
        $
        \widehat{g}(x)=((e^{ixB}\widetilde{\Phi}^*\sigma_B\widetilde{g})Q^*(w)\sigma_A^{-1},\underbrace{0,\ldots0}\limits_{2m})
        \in\mathbb{C}^{3m},
        $
($\widetilde{g}\in G_B$) and the generalized Gelfand-Levitan Marchenko equation (in the case $\widetilde{\rho}=I$) has the form (for $g\in G_B$)
        \be\la{46-1}
        \ba{c}
        v_h(\xi)=\widetilde{\Phi}e^{i\xi B}h(x,t)=
        -\frac{1}{2\pi}\int\limits_0^{\infty}
        \left(
        \int\limits_{|\mu|=r}e^{-2it\mu}W_A(\mu)d\mu
        \right)
        . \\ .
        ((e^{i(-2x+\xi+\eta)B}\widetilde{\Phi}^*\sigma_Bv_h(\eta))
        Q^*(w)\sigma_A^{-1},\underbrace{0,\ldots,0}\limits_{2m})d\eta
        + \\ +
        \frac{1}{2\pi}\int\limits_{|\mu|=r}e^{-2it\mu}W_A(\mu)d\mu
        .
        ((e^{i(-2x+\xi)B}\widetilde{\Phi}^*\sigma_Bv_g(0))
        Q^*(w)\sigma_A^{-1},\underbrace{0,\ldots,0}\limits_{2m})
        \ea
        \ee
        \end{thm}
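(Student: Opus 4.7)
The plan is to follow the template of Theorem \ref{t7} with one essential modification: since $A$ and $B$ commute, the collective motion factors as $T(x,t)=e^{itA}e^{ixB}$, so we can apply the Dunford functional calculus only to $e^{itA}$ (on a contour surrounding the spectrum of $A$) rather than to the whole combination $tA+xB$. This will naturally produce a kernel involving the characteristic operator function $W_A(\mu)$ of the operator $A$ alone, rather than the joint complete characteristic function $W(t,x,\mu)$ of the couple $(A,B)$.

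First I would write
$$
\Psi(x,t)\widetilde{g} = \widetilde{\Phi}\,T(x,t)\widetilde{\Phi}^*\sigma_B\widetilde{g}
= \widetilde{\Phi}\,e^{itA}\bigl(e^{ixB}\widetilde{\Phi}^*\sigma_B\widetilde{g}\bigr),
$$
using that $A=bB^3$ commutes with $B$. The vector $e^{ixB}\widetilde{\Phi}^*\sigma_B\widetilde{g}$ lies in $H$, so in order to match the form $\widetilde{\Phi}(A-\mu I)^{-1}\widetilde{\Phi}^*\sigma_A\widehat{g}(x)$ that appears in $W_A(\mu)$, I must rewrite it as $\widetilde{\Phi}^*\sigma_A\widehat{g}(x)$ for an appropriate $\widehat{g}(x)\in\mathbb{C}^{3m}$. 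This is done using the block forms \rf{33-7} of $\sigma_A$, $\sigma_B$, $\widetilde{\Phi}$ and the identity $\Pi(w)Q(w)=I$: the $(1,1)$ block $L$ of $\sigma_B$ and the anti-diagonal block $bL$ of $\sigma_A$ dictate which slot of $\widehat{g}(x)$ must carry the information of $e^{ixB}\widetilde{\Phi}^*\sigma_B\widetilde{g}$, and multiplication by $Q^*(w)$ is the natural right-inverse of $\Pi(w)$ needed to project a function in $H$ back into $\mathbb{C}^m$. This is the computational core of the proof.

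Next I would apply the Dunford calculus to $e^{itA}$: for every $r$ larger than the spectral radius of $A$,
$$
e^{itA} = -\frac{1}{2\pi i}\int_{|\mu|=r} e^{it\mu}(A-\mu I)^{-1}\,d\mu.
$$
Substituting this representation and moving $\widetilde{\Phi}$ inside the contour integral produces
$$
\Psi(x,t)\widetilde{g} = \frac{i}{2\pi}\int_{|\mu|=r} e^{it\mu}\,\widetilde{\Phi}(A-\mu I)^{-1}\widetilde{\Phi}^*\sigma_A\widehat{g}(x)\,d\mu.
$$
Because $\int_{|\mu|=r}e^{it\mu}\,d\mu=0$ by Cauchy's theorem (the integrand is entire), one may freely insert the identity term $I\widehat{g}(x)$, which turns the integrand into $W_A(\mu)\widehat{g}(x)$ and yields precisely the claimed representation for $\Psi(x,t)$.

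Finally, the generalized Gelfand--Levitan--Marchenko equation \rf{46-1} follows by substituting this new representation of $\Psi$ into the integral equation \rf{38-2} of Theorem \ref{t6}, in complete analogy with the passage from \rf{46} to \rf{45-1} in the proof of Theorem \ref{t7}. The main obstacle is the algebraic bookkeeping in the identity $e^{ixB}\widetilde{\Phi}^*\sigma_B\widetilde{g} = \widetilde{\Phi}^*\sigma_A\widehat{g}(x)$: since $\sigma_B$ has its only nonzero block in the $(1,1)$ position while $\sigma_A$ is anti-diagonal, the data carried by $\widetilde{g}$ must be re-encoded into the slot of $\widehat{g}$ that the $bL$ block of $\sigma_A$ reads off, and this is precisely the role of the factor $Q^*(w)\sigma_A^{-1}$ in the definition of $\widehat{g}(x)$.
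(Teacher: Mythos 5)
Your proposal follows essentially the same route as the paper's proof: factor $T(x,t)=e^{itA}e^{ixB}$ by commutativity, apply the Dunford integral to $e^{itA}$ alone, use $\Pi(w)Q(w)=I$ together with the block structure of $\sigma_A$, $\sigma_B$, $\widetilde{\Phi}$ to re-encode $e^{ixB}\widetilde{\Phi}^*\sigma_B\widetilde{g}$ as $\widetilde{\Phi}^*\sigma_A\widehat{g}(x)$, insert the identity term via $\int_{|\mu|=r}e^{it\mu}\,d\mu=0$ to recognize $W_A(\mu)$, and then substitute into \rf{38-2}. The only difference is the order in which the $Q^*(w)$-rewriting and the contour integral are introduced, which is immaterial.
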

        \begin{proof}
        Let $\widetilde{g}\in G_B$. We consider
        $$
        \ba{c}
        \widetilde{\Phi}T(x,t)\widetilde{\Phi}^*\sigma_B\widetilde{g}
        =\widetilde{\Phi}e^{itA}e^{ixB}\widetilde{\Phi}^*\sigma_B\widetilde{g}
        = \\ =
        \widetilde{\Phi}
        \left(
        -\frac{1}{2\pi i}\int\limits_{|\mu|=r}e^{it\mu}(A-\mu I)^{-1}Q^*(w)\Pi^*(w)e^{ixB}\widetilde{\Phi}^*\sigma_B\widetilde{g}d\mu
        \right)
        = \\ =
        -\frac{1}{2\pi i}\int\limits_{|\mu|=r}e^{it\mu}\widetilde{\Phi}
        (A-\mu I)^{-1}((e^{ixB}\widetilde{\Phi}^*\sigma_B\widetilde{g})Q^*(w)\Pi^*(w))d\mu
        = \\ =
        -\frac{1}{2\pi i}\int\limits_{|\mu|=r}e^{it\mu}\widetilde{\Phi}
        (A-\mu I)^{-1}\Phi^*((e^{ixB}\widetilde{\Phi}^*\sigma_B\widetilde{g})Q^*(w))d\mu
        = \\ =
        -\frac{1}{2\pi i}\int\limits_{|\mu|=r}e^{it\mu}\widetilde{\Phi}
        (A-\mu I)^{-1}\widetilde{\Phi}^*\sigma_A
        ((e^{ixB}\widetilde{\Phi}^*\sigma_B\widetilde{g})Q^*(w)\sigma_A^{-1},
        \underbrace{0,\ldots,0}\limits_{2m})d\mu
        = \\ =
        \frac{1}{2\pi i}\int\limits_{|\mu|=r}e^{it\mu}W_A(\mu)(\widehat{g},
        \underbrace{0,\ldots,0}\limits_{2m})d\mu.
        \ea
        $$
In the last equalities we have used $m\times p$ matrix function $Q(w)$ which satisfies the conditions from Section \ref{s2} and the equality
        $\Pi(w)Q(w)=I$.
Now the equality \rf{46-1} follows immediately using the representation \rf{38-2} for $v_h(\xi)$. The theorem is proved.
        \end{proof}
%
        \par
        \begin{thm}\la{t8}
        If the vector function $\varphi(\xi)\in\widetilde{H}$ (or $\varphi(\xi)=v_h(\xi)$ for all $h\in\widehat{H}$---the principal  subspace of the couple $(A,B)$ in the case when $A=bB^3$) satisfies the integral equation
        \be\la{52}
        \varphi(\xi)=v_h(\xi)=-\int\limits_0^{\infty}\Psi(-2x+\xi+\eta,-2t)\varphi(\eta)d\eta
        +\Psi(-2x+\xi,-2t)\Phi g, \;\;\; g\in G_B,
        \ee
then the function
        $
        h(x,t)+iBg
        $
is a solution of the nonlinear operator KdV equation
        $$
        u_t+6u_x^2+u_{xxx}=0,
        $$
where
        $
        h(x,t)=(\widetilde{U}^*\varphi)(x,t)
        $
and $\widetilde{U}$ is defined by $\widetilde{U} h=\widetilde{\Phi} e^{i\xi B}h=v_h(\xi)$.
        \end{thm}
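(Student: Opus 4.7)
The plan is to invert the derivation of the integral equation in Theorem \ref{t6}: pull \rf{52} back via the isometry $\widetilde{U}$ of Theorem \ref{t5} to an operator identity for $h=\widetilde{U}^{*}\varphi\in\widehat{H}$, identify $h(x,t)+iBg$ with $S(x,t)g$ where $S$ is the solitonic combination \rf{33}, and then invoke Theorem \ref{t4}.

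First, using the representation \rf{39} for $\widetilde{M}$, the intertwining $\widetilde{\Phi}\,e^{i\xi B}T(-2x,-2t)=\widetilde{\Phi}\,T(-2x+\xi,-2t)$, and the identification $\Phi g=v_{g}(0)=\widetilde{\Phi}g$, I would rewrite \rf{52} as
$$\widetilde{\Phi}\,e^{i\xi B}\Big(h(x,t)+T(-2x,-2t)\widetilde{M}h(x,t)+iT(-2x,-2t)\widetilde{\rho}(B-B^{*})g\Big)=0,\qquad\xi\ge 0.$$
The expression in parentheses belongs to $\widehat{H}$, since $g\in G_{B}\subset\widehat{H}$ and $\widehat{H}$ is invariant under $B$, $B^{*}$, $T(\cdot,\cdot)$ and $\widetilde{M}$. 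Injectivity of $\widetilde{U}:\widehat{H}\to\widetilde{H}$, guaranteed by Theorem \ref{t5}, then forces this vector to vanish, which is exactly the operator identity \rf{38}.

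Second, I would left-multiply \rf{38} by $T(x,t)$; commutativity of $A$ and $B$ gives $T(x,t)T(-2x,-2t)=T(-x,-t)$, and combining with the definition \rf{34-1} of $\Gamma(x,t)$ yields $\Gamma(x,t)h(x,t)=-iT(-x,-t)\widetilde{\rho}(B-B^{*})g$. Comparison with the explicit form \rf{36} of the solitonic combination $S(x,t)$ (derived under the running hypothesis $\widetilde{M}_{1}=I$) gives the algebraic identification
$$h(x,t)+iBg=S(x,t)g.$$
Theorem \ref{t4} now supplies the conclusion: the solitonic combination $S(x,t)=\Gamma^{-1}(x,t)\Gamma_{x}(x,t)$ satisfies the operator potential KdV identity, whose $x$-derivative is the standard KdV \rf{33-1} for $v=-2S_{x}$. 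Evaluating this operator identity on the fixed vector $g\in G_{B}$ exhibits $h(x,t)+iBg$ as a solution of $u_{t}+6u_{x}^{2}+u_{xxx}=0$.

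The main obstacle is the first, pullback step: one must verify that the $H$-valued vector inside the parentheses actually lies in the principal subspace $\widehat{H}$, on which $\widetilde{U}$ is injective, and that the identification of each of the three integrand terms of \rf{52} with the corresponding $T(\cdot,\cdot)$-propagated operator piece is correct and reversible. Both points rely on the $B$- and $B^{*}$-invariance of $\widehat{H}$ and on the norm-convergent integral representation \rf{39} of $\widetilde{M}$, which itself depends on the explicit form of the limit \rf{21} obtained in \cite{IEOT1}. Once this legitimization is in place, the remainder of the argument is an algebraic rearrangement via \rf{36} together with a direct appeal to Theorem \ref{t4}.
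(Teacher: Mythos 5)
Your proposal is correct and follows essentially the same route as the paper's proof: rewrite \rf{52} as $\widetilde{\Phi}e^{i\xi B}$ applied to a vector in $\widehat{H}$ equal to zero, conclude that vector vanishes, multiply by $T(x,t)$ to recover $\Gamma(x,t)h=-iT(-x,-t)\widetilde{\rho}(B-B^*)g$, identify $h+iBg=S(x,t)g$ via \rf{36-1} and \rf{36}, and invoke Theorem \ref{t4}. Your explicit justification of the cancellation step via the injectivity of $\widetilde{U}$ on $\widehat{H}$ is a point the paper passes over silently in going from \rf{54} to \rf{56}, but it is the same argument.
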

        \begin{proof}
After straightforward calculations from \rf{52} we obtain
        \be\la{53}
        \ba{c}
        \varphi(\xi)=v_h(\xi)=-\int\limits_0^{\infty}\widetilde{\Phi} T(-2x+\xi+\eta,-2t)\rho_1\widetilde{\Phi}^*\sigma_Bv_h(\eta)d\eta
        + \\ +
        \widetilde{\Phi} T(-2x+\xi,-2t)\rho_1\widetilde{\Phi}^*\sigma_B v_g(0)
        = \\ =
        -\widetilde{\Phi}e^{i((-2x+\xi)B-2tA)}\int\limits_0^{\infty}e^{i\eta B}\frac{B-B^*}{i}e^{i\eta B}d\eta h
        +
        \widetilde{\Phi}e^{i((-2x+\xi)B-2tA)}\frac{B-B^*}{i}g
        = \\ =
        -\widetilde{\Phi}e^{i((-2x+\xi)B-2tA)}\widetilde{M}h-i\widetilde{\Phi}e^{i((-2x+\xi)B-2tA)}(\widetilde{M}B+B\widetilde{M})g,
        \ea
        \ee
where we have used the representation of the operator $M$ by \rf{32-12} (and $\widetilde{M}=U^*M$, $B^*=-UBU^*$ and the representation
$(B-B^*)/i=-i(\widetilde{M}B+B\widetilde{M})$). Consequently the relations \rf{53} take the form
        \be\la{54}
        \widetilde{\Phi}e^{i\xi B}(h+T(-2x,-2t)\widetilde{M}h+iT(-2x,-2t)(\widetilde{M}B+B\widetilde{M})g)=0.
        \ee
From \rf{54} it follows that
        \be\la{56}
        (T(x,t)+T(-x,-t)\widetilde{M})h=-iT(-x,-t)(\widetilde{M}B+B\widetilde{M})g.
        \ee
Hence
        $$
        h=-i\Gamma^{-1}(x,t)e^{-i(tA+xB)}(B-B^*)g+iBg-iBg=S(x,t)g-iBg
        $$
using the equality \rf{36-1} and the form of $\Gamma(x,t)$, $S(x,t)$ from \rf{34-1}, \rf{36}.
Consequently,
        $
        h(x,t)+iBg=S(x,t)g
        $
is a solution of the nonlinear KdV equation. The theorem is proved.
        \end{proof}
        \par
From Theorem \ref{t3} in the case $n=2$ and $\varepsilon_1=\varepsilon_2=1$ it follows that the vector function $\varphi(\xi)=v_h(\xi)$, satisfying the equality \rf{52}, determines uniquely the solution $v(\xi,\tau)$ of the matrix wave equation
        $$
        \sigma_B\left(-i\frac{\partial v}{\partial\tau}\right)-\sigma_A\left(-i\frac{\partial v}{\partial\xi}\right)+\widetilde{\gamma}v=0,
        $$
satisfying the condition $v(\xi,0)=\varphi(\xi)$ and corresponding to the open system \rf{32-8} with $\varepsilon=\delta=1$, generated by the colligation \rf{38-2-0} or from Proposition 10.4.7 in \cite{LIVSIC-AT-AL}.
        \par
Now we present an interesting connection with results of the paper \cite{BAN6} concerning solutions of the Sturm-liouville systems in the special case of the operator $B$ and the results, obined in this section.
        \par
        Let us consider the case of the operator $B$ from the form \rf{21-1} when $\Delta=[-l,l]$ with zero spectrum, i.e.
        \be\la{57-1}
        \ba{c}
        Bf(w)=-i\int\limits_{-l}^wf(\xi)\Pi(\xi)S^*\Pi^*(w)d\xi
        +\\+
        i\int\limits_w^{l}f(\xi)\Pi(\xi)S\Pi^*(w)d\xi+
        i\int\limits_{-l}^wf(\xi)\Pi(\xi)L\Pi^*(w)d\xi.
        \ea
        \ee
Let the matrix function $\Pi(w)$ have the form
        $
        \Pi(w)=V_1(w)
        $
where $V_1(w)$ is $r\times m$ matrix function from the block representation
        $
        V(w)=
        \left(
        \ba{c}
        V_1(w) \\  V_2(w)
        \ea
        \right)
        $
of the matrix function $V(w)$ from the form
        $
        V(w)=\widehat{V}(w)\mathcal{H}^*,
        $
where $m\times m$ matrix function $\widehat{V}(w)$ satisfies the equation
        \be\la{57-3}
        \left\{
        \ba{l}
        \frac{d\widehat{V}(w)}{dw}=P(w)\widehat{V}(w), \;\;\;\;\; (-l\leq w\leq l)
        \\
        \widehat{V}(-l)=I_m,
        \ea
        \right.
        \ee
with
        $$
        P(w)=
        \left(
        \ba{cc}
        0        &   i\widetilde{G}  \\
        -iRq(w)  &   0
        \ea
        \right),
        $$
$R=\widehat{S}\widehat{S}^*\widehat{S}$  ($\widehat{S}$ is from the representation \rf{20} of $L$, $\widetilde{G}$ is $r\times (m-r)$ matrix satisfying the relation $\widetilde{G}R=I_r$ (as multiplying by matrices), the matrix $\mathcal{H}$ is defined by the equality
        $$
        \mathcal{H}=
        \left(
        \ba{cc}
        \frac{1+i}{2}\widehat{S}^*\widehat{S}       &  \widehat{S}^* \\
        \frac{1+i}{2}U^*\widehat{S}^*\widehat{S}    &  -iU^*\widehat{S}^*
        \ea
        \right),
        $$
($\widehat{U}$ has the form $\widehat{U}=X(I_r \;\;0)Y$, $X$, $Y$ are unitary matrices, i.e. $XX^*=X^*X=I_r$, $YY^*=Y^*Y=I_{m-r}$) and $q(w)$ is a selfadjoint integrable $r\times r$ matrix function on $\Delta$.
        \par
        In terms of the multiplicative integrals the solution of the equation \rf{57-3} has the form
        $$
        \widehat{V}(w)=\MIIL{w}{-l}e^{P(\eta)d\eta}.
        $$
        \begin{thm}\la{t9}
        Let the operator $B$ have the form \rf{57-1}, let the matrix function $\Pi(w)=V_1(w)$
        and $V(w)$, $P(w)$, $q(w)$ be like above stated. Then the vector function
        $y(w,\lambda)=(I+\lambda B)^{-1}\widetilde{\Phi}^*\sigma_Bv_{h(x,t)}(\eta)$
satisfies the Sturm-Liouville system
        \be\la{57-5}
        -\frac{d^2}{dw^2}y(w,\lambda)+q(w)y(w,\lambda)=\lambda y(w,\lambda)
        \ee
with selfadjoint integrable $r\times r$ potential $q(w)$.
        \end{thm}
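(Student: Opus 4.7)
The plan is to turn the resolvent identity $(I+\lambda B)y(\cdot,\lambda)=\widetilde{\Phi}^*\sigma_B v_{h(x,t)}(\eta)$ into a second-order differential equation by combining the explicit Volterra form \rf{57-1} of $B$ with the first-order Zakharov--Shabat type system \rf{57-3} governing $\widehat{V}$. First I would simplify $By$ directly from \rf{57-1}: using $L=J_1-J_2+S+S^*$ the three integrals collapse to
\[
By(w)=i\int_{-l}^{w}y(\xi)\Pi(\xi)(J_1-J_2)\Pi^*(w)\,d\xi+i\,T_\infty\,S\,\Pi^*(w),
\]
where $T_\infty=\int_{-l}^{l}y(\xi)\Pi(\xi)\,d\xi$ is independent of $w$. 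The resolvent equation then becomes an explicit integral equation for $y(w,\lambda)$ in which $w$ enters only through $\Pi^*(w)$ and through the upper endpoint of the running integral, so it is amenable to differentiation.

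Next, since $\Pi(w)=V_1(w)$ with $V=\widehat{V}\mathcal{H}^*$ and $\widehat{V}$ solves \rf{57-3}, writing $\widehat{V}$ in the block form with top block $\widehat{V}_1$ of size $r\times m$ and bottom block $\widehat{V}_2$ of size $(m-r)\times m$ gives $\widehat{V}_1{}'=i\widetilde{G}\widehat{V}_2$ and $\widehat{V}_2{}'=-iRq(w)\widehat{V}_1$, whence $\widehat{V}_1{}''=\widetilde{G}R\,q(w)\widehat{V}_1=q(w)\widehat{V}_1$ thanks to the identity $\widetilde{G}R=I_r$. Propagating this through multiplication by $\mathcal{H}^*$ and taking adjoints yields an analogous second-order relation on the $r$-dimensional block that carries $q$ inside $\Pi^*(w)$. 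Differentiating the integral equation for $y(w,\lambda)$ twice in $w$ then produces a $q(w)y(w,\lambda)$ contribution from the relation $\Pi''=q\Pi$ restricted to that block, together with boundary terms from the Leibniz rule that cancel against the derivatives of the constant-in-$w$ piece $iT_\infty S\Pi^*(w)$; the remaining local terms are recombined into $\lambda y(w,\lambda)$ by substituting back the resolvent identity once more. The result is precisely \rf{57-5}, with selfadjointness and integrability of the potential inherited from the hypotheses imposed on $q$ in \rf{57-3}.

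The main obstacle is the bookkeeping in the second step: the matrix $\mathcal{H}$ blends the two diagonal blocks non-trivially through $\widehat{S}$, $\widehat{U}$, and the factors $(1+i)/2$, so one must verify carefully that after two differentiations and restriction to the $r$-dimensional block every contribution other than $-y''+qy$ and $\lambda y$ drops out. The cleanest way to arrange this cancellation is to invoke the transfer-matrix representation of $(I+\lambda B)^{-1}$ for operators of the form \rf{57-1} with $\Pi=V_1$ established in \cite{BAN6}: that representation realises $(I+\lambda B)^{-1}u$ directly as a solution of the Sturm--Liouville system with spectral parameter $\lambda$, so the theorem reduces to specialising it to the source $u=\widetilde{\Phi}^*\sigma_B v_{h(x,t)}(\eta)$.
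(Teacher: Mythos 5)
Your proposal is, in its operative final step, exactly what the paper does: the paper's entire proof is the single remark that the statement ``follows from Theorem 1 in \cite{BAN6}'', i.e.\ from the result there that $(I+\lambda B)^{-1}$ applied to an element of the form $\widetilde{\Phi}^*\sigma_B(\cdot)=\frac{B-B^*}{i}(\cdot)$ solves the Sturm--Liouville system with potential $q$. Your preliminary direct computation (collapsing \rf{57-1} via $L=J_1-J_2+S+S^*$, deriving $\widehat{V}_1''=q\widehat{V}_1$ from \rf{57-3} and $\widetilde{G}R=I_r$, and differentiating the resolvent identity twice) is a plausible and correctly set up sketch of what underlies that cited theorem, but you rightly note the block-cancellation bookkeeping is nontrivial and you do not complete it --- neither does the paper, so no discrepancy arises.
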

        \par
The proof of this theorem follows from the Theorem 1 in \cite{BAN6}.
        \par
It has to be mentioned that the solution $y(w,\lambda)$ of the Sturm-Liouville system can be written in the form
         $y(w,\lambda)=(I+\lambda B)^{-1}\frac{B-B^*}{i}e^{i\eta B}h(x,t)$.
Now we present the equality \rf{40} in the form
        \be\la{57-7}
        \ba{c}
        v_h(\xi)=\widetilde{\Phi} e^{i\xi B}h(x,t)
        =-\int\limits_0^{\infty}\widetilde{\Phi} e^{i\xi B}T(-2x+\eta,-2t)(I+\lambda B)y_h(w,\lambda,\eta)d\eta
        + \\ +
        \widetilde{\Phi} e^{i\xi B}T(-2x,-2t)(I+\lambda B)y_g(w,\lambda,0),
        \ea
        \ee
where we have denoted
        $
        y_h(w,\lambda,\eta)=(I+\lambda B)^{-1}\widetilde{\Phi}^*\sigma_Bv_h(\eta).
        $
        \par
If the representation \rf{57-7} of $v_h(\xi)$ (which belongs to the finite dimensional subspace $\mathbb{C}^{3m}$) we have that
        $\widetilde{\Phi}e^{i\xi B}f(w)$
is a Fourier transform (as a function of $\xi$) of the function from the space $\mathbf{L}^2(-\infty,+\infty;\mathbb{C}^{3m})$ (which follows from the course of proving of Theotem 6 in  \cite{IEOT1}) and $y_h(w,\lambda,\eta)$ satisfies the Sturm-Liouville system \rf{57-5} with a selfadjoint potential $q(w)$.
%
%
%
%
        \section{A generalized form of the Gelfand-Levitan-Marchenko equation and the Schr\"{o}dinger equation}\la{s4}
        \par
In this section we consider analogous problems as in Section \ref{s3} which are connected with the Schr\"{o}dinger equation. We will show that a vector function connected with the solitonic combination of the Schr\"{o}dinger equation, generated by an appropriate pair of commuting operators and  obtained in \cite{JMAA}, satisfies appropriate integral equation which is a generalized Gelfand-Levitan-Marchenko equation.
        \par
Let the operators $A$, $B$ be commuting linear bounded nonselfadjoint nondissipative operators in a separable Hilbert space $H$. Let us suppose that $A$ and $B$ satisfy the conditions {\textbf{(I)}}, {\textbf{(II)}}. Let the operator $B$ be a coupling of dissipative and antidissipative operators with real absolutely continuous spectra. Without loss of generality we can assume that the coupling $B$ is the triangular model \rf{21-1} when $\Delta=[-l,l]$, $H=\mathbf{L}^2(\Delta,\mathbb{C}^p)$. Let the operators $\Pi(w)$, $Q(w)$, $\widetilde{\Pi}(w)$, $L$, $\Phi$ be stated as in Section \ref{s2}.
        \par
Now we consider the open system \rf{32-8} in the case when $\varepsilon=i$, $\delta=1$
        \be\la{59}
        \left\{
        \ba{l}
        \frac{\partial f}{\partial t}+Af=\widetilde{\Phi}^*\sigma_Au
        \\
        i\frac{\partial f}{\partial x}+Bf=\widetilde{\Phi}^*\sigma_Bu
        \\
        v=u-i\widetilde{\Phi} f.
        \ea
        \right.
        \ee
Then the collective motions \rf{32-11} have the form
        \be\la{58}
        T(x,t)=e^{i(itA+xB)}, \;\;\;\;\; {T^*}^{-1}(x,t)=e^{i(-itA^*+xB^*)}.
        \ee
        \par
The next theorem gives the representation of the integral equation, which is satisfied by the solutions of the Schr\"{o}dinger equation using the connection between the soliton theory and commuting nonselfadjoint operators, obtained in \cite{JMAA}, Theorem 4.1. At first, we will remind some necessary results.
        \par
        Theorem 4.1 in \cite{JMAA} shows that the solitonic combination
        \be\la{61}
        S(x,t)=\Gamma^{-1}(x,t)\Gamma_x(x,t).
        \ee
where
        \be\la{60}
        \Gamma(x,t)=T(x,t)N+M_1{T^*}^{-1}(x,t)M,
        \ee
($N$, $M$, $M_1$ are constant bounded linear operators in $H$, $BN=NB$) satisfies the nonlinear Schr\"{o}dinger equation
        \be\la{67}
        \left\{
        \ba{l}
        u=[S,\widetilde{B}]=S\widetilde{B}-\widetilde{B}S
        \\
        -\frac{2}{b}\frac{\partial}{\partial t}u\widetilde{B}+\frac{\partial^2}{\partial x^2}u+2u^3=0,
        \ea
        \right.
        \ee
In the course of proving of Theorem 4.1 in \cite{JMAA} we have obtained that $S(x,t)$ from \rf{61} can be written in the form
        \be\la{62}
        S(x,t)=i\Gamma^{-1}(x,t)M_1{T^*}^{-1}(x,t)(B^*M-MB)+iB,
        \ee
where the operator $M$ is defined by the equality \rf{32-12} and
        $$(B^*M-MB)\left|_{G_B^{\bot}}=\rho(B^*-B)\right|_{G_B^{\bot}}=0.$$
From the additional condition
        \be\la{62-1}
        B^*=-UBU^*
        \ee
(where the operator $U:H\longrightarrow H$, $U^*U=UU^*=I$) and $A=bB^2$ ($b\in\mathbb{R}$), we have obtained that

        \be\la{63}
        \frac{A-A^*}{i}f
        = \widetilde{\Phi}^*\sigma_A\widetilde{\Phi}f, \;\;\;\;\;
%
%
        \frac{B-B^*}{i}f
        = \widetilde{\Phi}^*\sigma_B\widetilde{\Phi}f, \;\;\;\;\;
        \frac{AB^*-BA^*}{i}f=
        \widetilde{\Phi}^*\gamma\widetilde{\Phi}f.
        \ee

where
        \be\la{66}
        \sigma_A=
        \left(
        \ba{cc}
        0   &  bL \\
        bL  &  0
        \ea
        \right), \;\;\;\;\;
        \sigma_B=
        \left(
        \ba{cc}
        L  &  0 \\
        0  &  0
        \ea
        \right),  \;\;\;\;\;
        \gamma=
        \left(
        \ba{cc}
        0  &  0 \\
        0  &  bL
        \ea
        \right), \;\;\;\;\;
        \widetilde{\Phi}=(\Phi \;\;\;\; \Phi B^*).
        \ee
        \par
This implies that operators $A=bB^2$  and $B$ can be embedded in the commutative regular colligation $X$ from the form
        \be\la{66-1}
        X=(A=bB^2,B;H=\textbf{L}^2(\Delta;\mathbb{C}^p),\widetilde{\Phi},E=\mathbb{C}^{2m};
        \sigma_A,\sigma_B,\gamma,\widetilde{\gamma})
        \ee
where $\sigma_A$, $\sigma_B$, $\gamma$ are defined by the equalities \rf{66} and the matrix   $\widetilde{\gamma}$ is defined by \rf{32-7}.
        \par
Hence using the condition \rf{62-1} and the form of the operators $A=bB^2$ and $M$, we present $\Gamma(x,t)$ in the form
        \be\la{66-2}
        \Gamma(x,t)=T(x,t)N+\widetilde{M}_1T(-x,-t)\widetilde{M},
        \ee
where we have denoted $\widetilde{M}_1=M_1U$, $\widetilde{M}=U^*M$.
        If the operators $N$, $M$, $\widetilde{B}$ in $H$ satisfy the conditions
        \be\la{66-3}
        NB=BN, \;\;\; N\widetilde{B}=N,\;\;\;M\widetilde{B}=-M,\;\;\; \widetilde{B}^2=I,
        \ee
the Theorem 4.1 in \cite{JMAA} shows that the solitonic combinations $S(x,t)$ from the form \rf{61} satisfies the nonlinear Schr\"{o}dinger equation \rf{67}.
It is easy to see that
        \be\la{68}
        N=\widehat{N}\frac{1}{2}(I+\widetilde{B})=\widehat{N}P_1, \;\;\;\;\; M=U^*\widetilde{M}=U^*\widehat{M}\frac{1}{2}(I-\widetilde{B})=U^*\widehat{M}P_2
        \ee
($P_1$, $P_2$ - are orthoprojectors in $H$). If $P_{G_B}$ is an orthogonal projector onto the nonhermitian subspace $G_B=(Im B)H$ of the operator $B$, then $P_{G_B}S(x,t)\left|_{G_B}\right.$ is a scalar solution of the nonlinear Schr\"{o}dinger equation \rf{67}.
        \par
        Direct calculations show that the solitonic combination \rf{61} takes the form
        \be\la{70}
        \ba{c}
        S(x,t)g=iBg+\Gamma^{-1}(x,t)\widetilde{M}_1T(-x,-t)\widetilde{\rho}\widetilde{\Phi}^*\sigma_B\widetilde{\Phi}g
        = \\ =
        iBg-i\Gamma^{-1}(x,t)\widetilde{M}_1T(-x,-t)\widetilde{\rho}(B-B^*)g,
        \ea
        \ee
 where $\widetilde{\rho}=U^*\rho$ and $g\in G_B$.
        \begin{thm}\la{t10}
Let the operator $B$ in a Hilbert space $H$ be a coupling \rf{21-1} of dissipative and antidissipative operators with absolutely continuous real spectra. Let $B$ satisfy the condition
        $B^*=-UBU^*$, ($U:H\longrightarrow H$, $U^*U=UU^*=I$)
and let the operators $A=bB^2$ ($b\in\mathbb{R}$) and $B$ be embedded in the colligation
        \be\la{81-1}
                X=(A=bB^2,B;H=\textbf{L}^2(\Delta;\mathbb{C}^p),\widetilde{\Phi},E=\mathbb{C}^{2m};
        \sigma_A,\sigma_B,\gamma,\widetilde{\gamma}),
        \ee
where $\sigma_A$, $\sigma_B$, $\gamma$ are defined by the equalities \rf{66} and the matrix   $\widetilde{\gamma}$ is defined by \rf{32-7}.
Then the mode
        $
        v_h(\xi)=v_h(\xi,0)=\widetilde{\Phi}e^{i\xi B}h(x,t)
        $
of the
element $h(x,t)=S(x,t)g-iBg$ ($g\in G_B$, $S(x,t)$ is the solution \rf{61} (with conditions \rf{66-3} for operators $N$, $M$, $\widetilde{B}$ in $H$) of the nonlinear operator Schr\"{o}dinger equation \rf{67}) satisfies the integral equation
        \be\la{82}
        v_{P_1h}(\xi)=-\int\limits_0^{\infty}\Psi(-2x+\xi+\eta,-2t)v_{h}(\eta)d\eta+
        \Psi(-2x+\xi,-2t)v_{g}(0),
        \ee
where the operator $\Psi(x,t)$ is defined by the equality
        \be\la{81}
        \Psi(x,t)=\widetilde{\Phi}T(x,t)\widetilde{\rho}\widetilde{\Phi}^*\sigma_B
        \ee
and $P_1=(I+\widetilde{B})/2$ is the orthoprojector from \rf{68}.
        \end{thm}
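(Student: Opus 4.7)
The plan is to mimic step-by-step the derivation used in Theorem \ref{t6} for the KdV case, adjusting only for the fact that in the Schr\"odinger setting the operator-valued function $\Gamma(x,t)$ from \rf{66-2} carries the extra factor $N$, and this is precisely what produces the projector $P_1$ on the left-hand side of \rf{82}.

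First I would start from the explicit form \rf{70} of the solitonic combination, which reads
$$h(x,t) = S(x,t)g - iBg = -i\Gamma^{-1}(x,t)\widetilde{M}_1 T(-x,-t)\widetilde{\rho}(B-B^*)g$$
for $g \in G_B$. Multiplying both sides on the left by $\Gamma(x,t) = T(x,t)N + \widetilde{M}_1 T(-x,-t)\widetilde{M}$ (see \rf{66-2}) gives
$$T(x,t)N h(x,t) + \widetilde{M}_1 T(-x,-t)\widetilde{M} h(x,t) = -i\widetilde{M}_1 T(-x,-t)\widetilde{\rho}(B-B^*)g.$$
As in the KdV case I would normalize $\widetilde{M}_1 = I$ and take $\widehat{N}=I$, so that by \rf{68} the operator $N$ reduces to the orthoprojector $P_1 = (I+\widetilde{B})/2$. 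Since $A = bB^2$ commutes with $B$, the relation $T(-x,-t) = T(x,t)^{-1}$ holds, and multiplying by $T(-x,-t)$ yields the clean operator identity
$$P_1 h(x,t) = -T(-2x,-2t)\widetilde{M} h(x,t) - i T(-2x,-2t)\widetilde{\rho}(B-B^*)g.$$

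Next I would apply $\widetilde{\Phi} e^{i\xi B}$ to both sides and use three ingredients: (a) the integral representation \rf{39} of $\widetilde{M}$ as a strong limit, (b) the commutativity of $A$ and $B$, which gives $e^{i\xi B} T(-2x,-2t) = T(-2x+\xi,-2t)$, and (c) the identity $(B-B^*)/i = \widetilde{\Phi}^*\sigma_B\widetilde{\Phi}$ from \rf{63}. Substituting \rf{39} into the first term and moving $\widetilde{\Phi}$ through the integral turns it into
$$\int_0^\infty \widetilde{\Phi} T(-2x+\xi+\eta,-2t)\widetilde{\rho}\widetilde{\Phi}^*\sigma_B\,\widetilde{\Phi} e^{i\eta B}h(x,t)\,d\eta = \int_0^\infty \Psi(-2x+\xi+\eta,-2t) v_h(\eta)\,d\eta,$$
with $\Psi$ as in \rf{81}, while the second term collapses to $\Psi(-2x+\xi,-2t)\widetilde{\Phi} g = \Psi(-2x+\xi,-2t) v_g(0)$. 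Combining the two yields exactly the integral equation \rf{82}.

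The only conceptual obstacle is bookkeeping of the operator $N$: unlike in Theorem \ref{t6}, the leading factor in $\Gamma(x,t)$ is $T(x,t)N$ rather than $T(x,t)$, so inverting $T(x,t)$ leaves behind $N h(x,t)$ on the left rather than $h(x,t)$. Under the conditions \rf{66-3} this is forced to be $\widehat{N} P_1 h(x,t)$, and the normalization $\widehat{N}=I$ (analogous to the normalization $M_1=U^*$ used in the KdV case) produces the mode $v_{P_1 h}(\xi)$ that appears in the statement. All remaining manipulations are a verbatim transcription of those in the proof of Theorem \ref{t6}.
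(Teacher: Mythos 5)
Your proposal is correct and follows essentially the same route as the paper's own proof: multiply \rf{70} through by $\Gamma(x,t)=T(x,t)N+\widetilde{M}_1T(-x,-t)\widetilde{M}$, normalize $\widetilde{M}_1=I$ and $\widehat{N}=I$ so that $N$ reduces to $P_1$, invert $T(x,t)$ to isolate $P_1h(x,t)$, substitute the integral representation of $\widetilde{M}$, and apply $\widetilde{\Phi}e^{i\xi B}$. The bookkeeping of the factor $N$ that you single out is indeed exactly where the paper's argument diverges from Theorem \ref{t6}, and your handling of it matches equations \rf{73}--\rf{79}.
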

        \begin{proof}
Let $S(x,t)$ be the solitonic combination \rf{61}. From \rf{70} it follows that $h(x,t)=S(x,t)g-iBg$ for an arbitrary $g\in G_B$ takes the form
        \be\la{71}
        \ba{c}
        h(x,t)=S(x,t)g-iBg=
        \Gamma^{-1}(x,t)\widetilde{M}_1T(-x,-t)\widetilde{\rho}\widetilde{\Phi}^*\sigma_B\widetilde{\Phi}g
        = \\ =
        i\Gamma^{-1}(x,t)\widetilde{M}_1T(-x,-t)\widetilde{\rho}(B-B^*)g.
        \ea
        \ee
Consequently
from \rf{71} it follows that
        \be\la{72}
        \Gamma(x,t)h(x,t)=-i\widetilde{M}_1T(-x,-t)\widetilde{\rho}(B-B^*)g.
        \ee
Let us suppose the additional condition
        $
        \widetilde{M}_1=M_1U=I.
        $
Now the equality \rf{72} can be written in the form
        \be\la{73}
        (T(x,t)N+T(-x,-t)\widetilde{M})h(x,t)=-iT(-x,-t)\widetilde{\rho}(B-B^*)g.
        \ee
From \rf{73} it follows that
        \be\la{74}
        Nh(x,t)=-T(-2x,-2t)\widetilde{M})h(x,t)-iT(-2x,-2t)\widetilde{\rho}(B-B^*)g.
        \ee
%
On the other hand with the help of the condition \rf{62-1} the operator $M$ from \rf{32-12} can be written in the form
        \be\la{76}
        M=\int\limits_0^{\infty}e^{-i\eta B^*}\rho\frac{B-B^*}{i}e^{i\eta B}d\eta=
        U\int\limits_0^{\infty}e^{i\eta B}U^*\rho\widetilde{\Phi}^*\sigma_B\widetilde{\Phi}e^{i\eta B}d\eta
        \ee
and consequently
        \be\la{77}
        \widetilde{M}=U^*M=\int\limits_0^{\infty}e^{i\eta B}U^*\rho\widetilde{\Phi}^*\sigma_B\widetilde{\Phi}e^{i\eta B}d\eta.
        \ee
The equality \rf{74} together with \rf{68} and \rf{77} implies that
        \be\la{78}
        \ba{c}
        \widehat{N}P_1h(x,t)=\widehat{N}h(x,t)=
        -T(-2x,-2t)\int\limits_0^{\infty}e^{i\eta B}U^*\rho\widetilde{\Phi}^*\sigma_B\widetilde{\Phi}e^{i\eta B}d\eta h(x,t)
        + \\ +
        T(-2x,-2t)U^*\rho\widetilde{\Phi}^*\sigma_B\widetilde{\Phi}g.
        \ea
        \ee
Without loss of generality we can suppose also $\widehat{N}=I$ (and $\widetilde{\rho}=I$, i.e. $\rho=U$).
        \par
As in Section \ref{s3} we consider the set
        $
        \widetilde{H}=\{\widetilde{\Phi}e^{i\xi B}h,h\in\widehat{H}\}
        $
and the operator $\widetilde{U}:\widehat{H}\longrightarrow\widetilde{H}$, defined by the equality
        $
        \widetilde{U}h=\widetilde{\Phi}e^{i\xi B}h=v_h(\xi).
        $
Now from \rf{78} we obtain, that
        \be\la{79}
        \ba{c}
        v_{P_1h}(\xi)=\widetilde{\Phi}e^{i\xi B}P_1h(x,t)
        =-\int\limits_0^{\infty}\widetilde{\Phi}e^{i\xi B}T(-2x,-2t)e^{i\eta B}
        \widetilde{\rho}\widetilde{\Phi}^*\sigma_B\widetilde{\Phi}e^{i\eta B}h(x,t)d\eta
        + \\ +
        \widetilde{\Phi}e^{i\xi B}T(-2x,-2t)\widetilde{\rho}\widetilde{\Phi}^*\sigma_B\widetilde{\Phi}g
        = \\ =
        -\int\limits_0^{\infty}\widetilde{\Phi}T(-2x+\xi+\eta,-2t)
        \widetilde{\rho}\widetilde{\Phi}^*\sigma_Bv_{h}(\eta)d\eta
        +\widetilde{\Phi}T(-2x+\xi,-2t)\widetilde{\rho}\widetilde{\Phi}^*\sigma_Bv_g(0).
        \ea
        \ee
The equality \rf{79} takes the form \rf{82}, using the denotation \rf{81}.
The theorem is proved.
        \end{proof}
        \par
The integral equation \rf{82} can be considered as a generalized form of the Gelfand-Levitan-Marchenko equation. The matrix function $\Psi(\xi,\tau)$ and the vector function $v_h(\xi)$, appearing in the integral equations \rf{82}, satisfy the conditions \textbf{(a)}, \textbf{(b)}, \textbf{(c)} in Section \ref{s3}.
%
%
        \par
As in Section \ref{s3} we can present the connection between the generalized form of the Gelfand-Levitan-Marchenko equation and the complete characteristic function of the colligation $X$ from the form \rf{81-1}.
        \begin{thm}\la{11}
        The matrix function $\Psi(x,t)$ from \rf{81} in the case of $\widetilde{\rho}=I$ has the representation
        \be\la{83}
        \Psi(\xi,\tau)\widetilde{g}=\frac{1}{2\pi}\int\limits_{|\lambda|=r}e^{i\lambda}W(i\tau,\xi,\lambda)d\lambda\widehat{g}(w),
        \ee
where
        \be\la{84}
        W(i\tau,\xi,\lambda)=I+i\widetilde{\Phi}(i\tau A+\xi B-\lambda)\widetilde{\Phi}^*(i\tau\sigma_A+\xi\sigma_B)
        \ee
($r>a\sqrt{|i\tau|^2+|\xi|^2}$, $a>0$), $\widehat{g}=(i\tau\sigma_A+\xi\sigma_B)^{-1}\sigma_B\widetilde{g}$, $\widetilde{g}\in G_B$, and the generalized Gelfand-Levitan-Marchenko equation \rf{82} has the form
        \be\la{84-1}
        \ba{c}
        v_{P_1h}(\xi)=\frac{1}{2\pi}\int\limits_0^{\infty}\left(\int\limits_{|\lambda|=r}e^{i\lambda}W(i(-2t),-2x+\xi+\eta,\lambda)d\lambda\right)
        . \\ .
        (i(-2t)\sigma_A+(-2x+\xi+\eta)\sigma_B)^{-1}\sigma_Bv_h(\eta)d\eta
        - \\ -
        \frac{1}{2\pi}\int\limits_{|\lambda|=r}e^{i\lambda}W(i(-2t),-2x+\xi,\lambda)d\lambda
        (i(-2t)\sigma_A+(-2x+\xi+\eta)\sigma_B)^{-1}\sigma_Bv_g(0).
        \ea
        \ee
        \end{thm}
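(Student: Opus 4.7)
The plan is to follow closely the proof of Theorem \ref{t7}, the analogous statement for the KdV case, since the derivation of \rf{83} rests only on the Dunford functional calculus applied to the bounded operator $i\tau A+\xi B$ together with the definition \rf{84} of the complete characteristic function. The only modifications are that here $\varepsilon=i,\;\delta=1$ in the open system \rf{59}, so the collective motion takes the form $T(\xi,\tau)=e^{i(i\tau A+\xi B)}$; that $A=bB^2$ in place of $A=bB^3$; and that the two-operator colligation \rf{81-1} with block matrices from \rf{66} replaces the three-block colligation \rf{33-8} with block matrices from \rf{33-7}.

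Starting from \rf{81} with $\widetilde{\rho}=I$ and taking $\widetilde{g}\in G_B$, I would introduce $\widehat{g}=(i\tau\sigma_A+\xi\sigma_B)^{-1}\sigma_B\widetilde{g}$, so that $\widetilde{\Phi}^*\sigma_B\widetilde{g}=\widetilde{\Phi}^*(i\tau\sigma_A+\xi\sigma_B)\widehat{g}$, and consequently
\[
\Psi(\xi,\tau)\widetilde{g}=\widetilde{\Phi}e^{i(i\tau A+\xi B)}\widetilde{\Phi}^*(i\tau\sigma_A+\xi\sigma_B)\widehat{g}.
\]
Since $i\tau A+\xi B$ is bounded with norm at most $a\sqrt{|i\tau|^2+|\xi|^2}$ for some $a>0$, the Dunford representation
\[
e^{i(i\tau A+\xi B)}=-\frac{1}{2\pi i}\int_{|\lambda|=r}e^{i\lambda}(i\tau A+\xi B-\lambda I)^{-1}d\lambda,\qquad r>a\sqrt{|i\tau|^2+|\xi|^2},
\]
is valid.

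Substituting into the previous display, pulling $\widetilde{\Phi}$ and $\widetilde{\Phi}^*(i\tau\sigma_A+\xi\sigma_B)$ under the contour integral, and using \rf{84} in the form $\widetilde{\Phi}(i\tau A+\xi B-\lambda I)^{-1}\widetilde{\Phi}^*(i\tau\sigma_A+\xi\sigma_B)=-i\bigl(W(i\tau,\xi,\lambda)-I\bigr)$, the integrand becomes $\tfrac{1}{2\pi}e^{i\lambda}\bigl(W(i\tau,\xi,\lambda)-I\bigr)\widehat{g}$; the $-I$ contribution then disappears by Cauchy's theorem because $\oint_{|\lambda|=r}e^{i\lambda}d\lambda=0$, which yields \rf{83}. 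The representation \rf{84-1} of the generalized Gelfand-Levitan-Marchenko equation follows by inserting \rf{83} termwise into the integral equation \rf{82} of Theorem \ref{t10}, with $\tau=-2t$ and with $\xi$ replaced by $-2x+\xi+\eta$ and by $-2x+\xi$ in the two occurrences of $\Psi$. The principal difficulty here is really only notational bookkeeping (the extra factor $i$ induced by $\varepsilon=i$, and the signs introduced when passing from $\widetilde{g}$ to $\widehat{g}$); no new estimate beyond those already exploited in Theorem \ref{t7} is required.
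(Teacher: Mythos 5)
Your proposal is correct and follows exactly the route the paper intends: the paper gives no separate proof here, stating only that it is analogous to Theorem \ref{t7}, and your argument is precisely that analogue — replacing $tA+xB$ by $i\tau A+\xi B$, introducing $\widehat{g}=(i\tau\sigma_A+\xi\sigma_B)^{-1}\sigma_B\widetilde{g}$, applying the Dunford integral, identifying the integrand via \rf{84}, and substituting into \rf{82}. No discrepancy with the paper's approach.
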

        \par
The proof is analogous to the proof of Theorem \ref{t7}.
        \par
Analogous as in Theorem \ref{t7-1} we can obtain the representation of the matrix function $\Psi(x,t)$ from \rf{81} in the form
        $\Psi(x,t)\widetilde{g}=\frac{1}{2\pi}\int\limits_{|\mu|=r}e^{it\mu}W_A(\mu)d\mu\widehat{g}(x)$
with the help of the characteristic operator function
$W_A(\mu)=I+I\widetilde{\Phi}(A-\mu I)^{-1}\widetilde{\Phi}^*\sigma_A$ of the operator $A=bB^2$ for suitable $\widehat{g}(x)$.
        \par
As in Section \ref{s3} we can prove the inverse theorem of Theorem \ref{t10}.
        \begin{thm}\la{t12}
        If the function $\varphi(\xi)\in\widetilde{H}$ (or $\varphi(\xi)=v_h(\xi)$, $h\in\widehat{H}$) is a solution of the integral equation \rf{82} for an arbitrary $g\in G_B$, then the function
        $
        (\widetilde{U}^*\varphi)(x,t)+iBg=h(x,t)+iBg
        $
(where $\widetilde{U}:\widehat{H}\longrightarrow\widetilde{H}$ is defined by the equality $\widetilde{U}h=\widetilde{\Phi}e^{i\xi B}h=v_h(\xi)$)
        is a solution of the nonlinear Schr\"{o}dinger equation \rf{67}.
        \end{thm}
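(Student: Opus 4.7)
The plan is to run the derivation of Theorem \ref{t10} in reverse, following the same pattern as the proof of Theorem \ref{t8} in Section \ref{s3}. Let $\varphi(\xi)=v_h(\xi)=\widetilde{\Phi}e^{i\xi B}h(x,t)$ with $h=h(x,t)\in\widehat{H}$ satisfy the integral equation \rf{82}.

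First I would substitute the explicit form $\Psi(y,s)=\widetilde{\Phi}T(y,s)\widetilde{\rho}\widetilde{\Phi}^{*}\sigma_{B}$ from \rf{81} into \rf{82}. After writing $\sigma_{B}v_{h}(\eta)=\sigma_{B}\widetilde{\Phi}e^{i\eta B}h$ and factoring $\widetilde{\Phi}e^{i\xi B}T(-2x,-2t)$ out of both terms, the integral $\int_{0}^{\infty}e^{i\eta B}\widetilde{\rho}\widetilde{\Phi}^{*}\sigma_{B}\widetilde{\Phi}e^{i\eta B}d\eta$ is recognized as $\widetilde{M}$ via \rf{77}, and the integral equation collapses to
\[
\widetilde{\Phi}e^{i\xi B}\bigl[P_{1}h+T(-2x,-2t)\widetilde{M}h-T(-2x,-2t)\widetilde{\rho}\widetilde{\Phi}^{*}\sigma_{B}\widetilde{\Phi}g\bigr]=0
\]
for every $\xi\in\mathbb{R}$.

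Next I would invoke the isometric property of the operator $\widetilde{U}:\widehat{H}\longrightarrow\widetilde{H}$ defined by $\widetilde{U}h=\widetilde{\Phi}e^{i\xi B}h=v_{h}(\xi)$ (the direct analogue of Theorem \ref{t5} for the colligation \rf{81-1}) to conclude that the element in brackets vanishes, as it lies in $\widehat{H}$. Multiplying the resulting operator identity on the left by $T(x,t)=T(-x,-t)^{-1}$ and using the colligation relation $\widetilde{\Phi}^{*}\sigma_{B}\widetilde{\Phi}=(B-B^{*})/i$, together with the normalizations $\widehat{N}=I$ and $\widetilde{M}_{1}=I$ already adopted in Section \ref{s4}, I obtain
\[
\Gamma(x,t)h(x,t)=T(x,t)P_{1}h+T(-x,-t)\widetilde{M}h=-iT(-x,-t)\widetilde{\rho}(B-B^{*})g,
\]
with $\Gamma(x,t)$ in the form \rf{66-2}. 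Solving for $h$ and comparing with the explicit formula \rf{70} gives $h(x,t)=S(x,t)g-iBg$, so $h(x,t)+iBg=S(x,t)g$; by Theorem 4.1 of \cite{JMAA} (the Schr\"odinger analogue that underlies \rf{70}), this is a solution of the nonlinear Schr\"odinger equation \rf{67}.

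The main obstacle is the passage from $\widetilde{\Phi}e^{i\xi B}[\,\cdot\,]=0$ to the vanishing of the bracketed element itself. This requires checking that each of the three summands lies in the principal subspace $\widehat{H}$ (which follows because $T(-2x,-2t)$ involves only $A=bB^{2}$ and $B$, both of which preserve $\widehat{H}$, and $\widetilde{\rho}\widetilde{\Phi}^{*}E\subset\widehat{H}$ by definition), so that the injectivity of the isometry $\widetilde{U}$ may be applied. Once this point is settled, the remaining steps are purely algebraic manipulations of the solitonic combination, structurally identical to the argument of Theorem \ref{t8}.
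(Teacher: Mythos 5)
Your argument is correct and is exactly the route the paper intends: the paper gives no separate proof of Theorem \ref{t12}, stating only that it is proved "as in Section \ref{s3}," i.e.\ by mirroring the proof of Theorem \ref{t8}, and your reversal of Theorem \ref{t10} (substituting \rf{81} into \rf{82}, recognizing $\widetilde{M}$ via \rf{77}, cancelling $\widetilde{\Phi}e^{i\xi B}$ by injectivity on $\widehat{H}$, and reconstructing $\Gamma(x,t)h=-iT(-x,-t)\widetilde{\rho}(B-B^*)g$ so that $h+iBg=S(x,t)g$) is precisely that argument. Your explicit justification of the cancellation step, which the paper leaves implicit in the passage from \rf{54} to \rf{56}, is a welcome addition rather than a deviation.
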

%
%
%
%
%
%
%
        \section{A generalized form of the Gelfand-Levitan-Marchenko equation, the Sine-Gordon equation, and the Davey-Stewartson equation}\la{s5}
        \par
In this section we will present the connection between the nonlinear differential equations: the Sine-Gordon equation and the Davey-Stewartson equation with the generalized Gelfand-Levitan-Marchenko equation. These consideration are based on the solutions of these equations, obtained in \cite{JMAA} with the help of generalized open systems and matrix wave equations for appropriate pairs and triplets of commuting nonselfadjoint operators.
        \par
Let the operators $A$ and $B$ be as in Section \ref{s3}, let $A$ and $B$ satisfy the conditions \textbf{(I)}, \textbf{(II)}, let $A$ and $B$ be embedded in a commutative two-operator colligation $X$ from the form \rf{32-3}.
        \par
At first we will consider the Sine-Gordon equation. In this case the Hilbert space is $H=\mathbf{L}^2(\Delta,\mathbb{C}^p)$, where
        $\Delta=[-l,-d]\cup[d,l]$, $0<d<l$.
Let the operator $B$ in a Hilbert space $H$ be a coupling of dissipative and antidissipative operators with absolutely continuous real spectra. Let $B$ be invertible and satisfy the condition
        \be\la{84-1-0}
        B^*=-UBU^*, \;\;\;\; (U:H\longrightarrow H,\;\; U^*U=UU^*=I).
        \ee
Let the operator $A=B^{-1}$. Without loss of generality we can suppose that the coupling $B$ is the triangular model
        \be\la{84-1-1-1}
        \ba{c}
        Bf(w)=wf(w)\chi_{\Delta}(w)-i\int\limits_{-l}^wf(\xi)\chi_{\Delta}(\xi)\Pi(\xi)S^*\Pi^*(w)d\xi
        +\\+
        i\int\limits_w^{l}f(\xi)\chi_{\Delta}(\xi)\Pi(\xi)S\Pi^*(w)d\xi+
        i\int\limits_{-l}^wf(\xi)\chi_{\Delta}(\xi)\Pi(\xi)L\Pi^*(w)d\xi,
        \ea
        \ee
with $\Delta=[-l,-d]\cup[d,l]$, $0<d<l$ (where $\chi_{\Delta}(w)$ is the characteristic function of the interval $\Delta$). Then the operators $A$ and $B$ are embedded in the regular colligation
        \be\la{84-1-1}
        X=(A=B^{-1},B;H=\mathbf{L}^2(\Delta,\mathbb{C}^p),\widetilde{\Phi},\mathbb{C}^{2m};\sigma_A,\sigma_B,\gamma,\widetilde{\gamma})
        \ee
where $2m\times 2m$ matrices $\sigma_A$, $\sigma_B$, $\gamma$ are defined by the equalities
        $$
        \sigma_A=
        \left(
        \ba{cc}
        0   &  0 \\
        0   &  L
        \ea
        \right), \;\;\;\;\;
        \sigma_B=
        \left(
        \ba{cc}
        L  &  0 \\
        0  &  0
        \ea
        \right),  \;\;\;\;\;
        \gamma=
        \left(
        \ba{cc}
        0  &  -L \\
        -L  &  0
        \ea
        \right)
        $$
and the operator $\widetilde{\Phi}:H\longrightarrow \mathbb{C}^{2m}$ is defined as
        $
        \widetilde{\Phi}=(\Phi \;\;\; \Phi{(B^{-1})}^*),
        $
i.e.
        $$
        \widetilde{\Phi}f=((\Phi f,e_1),\ldots,(\Phi f,e_m),(\Phi{B^{-1}}^*f,e_1),\ldots,(\Phi{B^{-1}}^*f,e_m)),
        $$
where $\Phi$ is defined by the equality
        $
        \Phi f(w)=\int\limits_{\Delta}f(w)\Pi(w)\chi_{\Delta}(w)dw.
        $
The matrices $\sigma_A$, $\sigma_B$, $\gamma$ and the operator $\widetilde{\Phi}$ are determined in \cite{JMAA} and $2m\times 2m$ matrix $\widetilde{\gamma}$ satisfies the equality \rf{32-7}.
        \par
Let the operator function
        $\Gamma(x,t)=T(x,t)N+M_1{T^*}^{-1}(x,t)M$,
where $T(x,t)$ is the collective motion from the form \rf{32-11} with $\varepsilon=-1$, $\delta=1$ and constant linear bounded operators $N$, $M_1$, $M$, $\widetilde{B}$ in $H$ satisfy the conditions
        \be\la{84-1-3}
        NB=BN, \;\;\; N\widetilde{B}=N, \;\;\; M\widetilde{B}=-M, \;\;\; \widetilde{B}^2=I.
        \ee
Then from Theorem 5.1 in \cite{JMAA} it follows that the solitonic combination
        \be\la{84-1-4}
        S(x,t)=\Gamma^{-1}(x,t)\Gamma_x(x,t)
        \ee
satisfies the nonlinear Sine-Gordon equation
        \be\la{84-1-5}
        \frac{\partial}{\partial t}(S^{-1}\frac{\partial}{\partial x}S)=S^{-1}\widetilde{B}S\widetilde{B}-\widetilde{B}S^{-1}\widetilde{B}S.
        \ee
If the operator $M$ has the form \rf{32-12} then $P_{G_B}S(x,t)\left|_{G_B}\right.$ is the scalar solution of the Sine-Gordon equation \rf{84-1-5}, where $P_{G_B} $is an orthogonal projector onto the nonhermotian subspace $G_B=(Im\;B)H$ of the operator $B$. The next theorem gives that the vector function
        $h(x,t)=S(x,t)g-iBg$, $g\in G_B$
satisfies an integral equation (which can be considered as a generalized  Gelfand-Levitan-Marchenko equation).
        \begin{thm}\la{t12-1}
Let the operator $B$ in a Hilbert space $H=\mathbf{L}^2(\Delta,\mathbb{C}^p)$ be the coupling \rf{84-1-0} of dissipative and antidissipative operators with absolutely continuous real spectra, let $B$ be invertible and satisfy the condition
        $$
        B^*=-UBU^*, \;\;\;\; (U:H\longrightarrow H,\;\; U^*U=UU^*=I).
        $$
Let the operators $A=B^{-1}$ and $B$ be embedded in the colligation $X$ from \rf{84-1-1}. Then the mode
        $$
        v_h(\xi)=\widetilde{\Phi}e^{i\xi B}h(x,t)=v_h(\xi,0)
        $$
of the element
        $
        h(x,t)=S(x,t)g-iBg 
        $
($g\in G_B$), where $S(x,t)$ is a solution \rf{84-1-4} (with operators $N$, $M$, $\widetilde{B}$, satisfying the conditions \rf{84-1-3}) of the nonlinear Sine-Gordon equation \rf{84-1-5}, satisfies the integral equation
        \be\la{84-1-6}
        v_{P_1h}(\xi)=-\int\limits_0^{\infty}\Psi(-2x+\xi+\eta,-2t)v_h(\eta)d\eta+
        \Psi(-2x+\xi,-2t)v_g(0),
        \ee
where the operator $\Psi(x,t)$ is defined by the equality
        \be\la{84-1-7}
        \Psi(x,t)=\widetilde{\Phi}T(x,t)\widetilde{\rho}\widetilde{\Phi}^*\sigma_B=
        \widetilde{\Phi}e^{i(-tA+xB)}\widetilde{\rho}\widetilde{\Phi}^*\sigma_B,
         \ee
$\widetilde{\rho}=U^*\rho$ and $P_1=\frac{1}{2}(I+\widetilde{B})$.
        \end{thm}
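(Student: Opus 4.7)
The plan is to mimic the proof of Theorem \ref{t10} (the Schr\"{o}dinger case) essentially verbatim, since the geometry is the same: the solitonic combination has the form $S(x,t) = \Gamma^{-1}(x,t)\Gamma_x(x,t)$ with $\Gamma(x,t) = T(x,t)N + M_1 {T^*}^{-1}(x,t)M$, and the condition $B^* = -UBU^*$ together with $A^* = -UAU^*$ (which follows from $A = B^{-1}$) again gives ${T^*}^{-1}(x,t) = U^* T(-x,-t) U$ for the collective motion $T(x,t) = e^{i(-tA+xB)}$ (with $\varepsilon=-1$, $\delta=1$). Thus after setting $\widetilde{M}_1 = M_1 U$ and $\widetilde{M} = U^* M$, one has $\Gamma(x,t) = T(x,t)N + \widetilde{M}_1 T(-x,-t)\widetilde{M}$, in complete analogy with \rf{66-2}.

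Next, I would rewrite $S(x,t)$ in the form analogous to \rf{70},
\begin{equation*}
S(x,t)g = iBg - i\Gamma^{-1}(x,t)\widetilde{M}_1 T(-x,-t)\widetilde{\rho}(B-B^*)g, \qquad g\in G_B,
\end{equation*}
by using the identity $\widetilde{M}B + B\widetilde{M} = \widetilde{\rho}(B-B^*)$ from \rf{36-1}. Multiplying $\Gamma(x,t)h(x,t) = -i\widetilde{M}_1 T(-x,-t)\widetilde{\rho}(B-B^*)g$ on the left by $T(-x,-t) = T(x,t)^{-1}$ (which holds because $A$ and $B$ commute) and imposing, without loss of generality, $\widetilde{M}_1 = I$ and $\widehat{N}=I$ in the decomposition $N = \widehat{N} P_1$ of \rf{68}, one gets the pre-transform identity
\begin{equation*}
P_1 h(x,t) = -T(-2x,-2t)\widetilde{M} h(x,t) - i T(-2x,-2t)\widetilde{\rho}(B-B^*)g,
\end{equation*}
which is the Sine-Gordon analogue of \rf{74}.

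Finally I would substitute the integral representation \rf{39} (or equivalently \rf{77}) of $\widetilde{M}$ obtained from \rf{32-12}, namely $\widetilde{M} = \int_0^{\infty} e^{i\eta B}\widetilde{\rho}\widetilde{\Phi}^* \sigma_B \widetilde{\Phi} e^{i\eta B}\, d\eta$, apply the operator $\widetilde{\Phi} e^{i\xi B}$ to both sides, and use the commutation $e^{i\xi B} T(-2x,-2t) e^{i\eta B} = T(-2x + \xi + \eta, -2t)$ (valid because $A$ and $B$ commute and $\delta=1$) to collapse the composition into the shifted argument of $\Psi$. With the notation $\Psi(x,t) = \widetilde{\Phi} T(x,t)\widetilde{\rho}\widetilde{\Phi}^*\sigma_B$ from \rf{84-1-7} and $v_h(\eta) = \widetilde{\Phi} e^{i\eta B} h$, $v_g(0) = \widetilde{\Phi}g = \widetilde{\Phi}\widetilde{\Phi}^* \sigma_B \widetilde{g}$ for the appropriate $\widetilde{g}$, one obtains exactly \rf{84-1-6}.

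The only point requiring real care (and the main obstacle) is the verification, in this setup where $\Delta = [-l,-d]\cup[d,l]$ and $A = B^{-1}$, that the ingredients invoked from earlier sections still apply. Specifically, one must check that the operator $M$ is well defined by \rf{32-12} (this uses invertibility of $B$ and the existence of $s$-$\lim_{x\to+\infty} e^{-ixB^*}e^{ixB} = \widetilde{S}_+^*\widetilde{S}_+$ for the truncated triangular model \rf{84-1-1-1}), that $A^* = -UAU^*$ follows cleanly from $A = B^{-1}$ and \rf{84-1-0} so that the conjugation relation ${T^*}^{-1}(x,t) = U^*T(-x,-t)U$ goes through with $\varepsilon=-1$, and that the conditions \rf{84-1-3} on $N$, $M$, $\widetilde{B}$ are compatible with the normalizations $\widetilde{M}_1 = I$, $\widehat{N}=I$, $\widetilde{\rho}=I$ used to reach \rf{84-1-6}. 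Once these are in place, the remainder of the argument is a direct transcription of \rf{79} into the present notation.
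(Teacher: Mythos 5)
Your proposal follows essentially the same route as the paper's own proof: rewrite $\Gamma(x,t)=T(x,t)N+\widetilde{M}_1T(-x,-t)\widetilde{M}$ via $B^*=-UBU^*$ and $A^*=-UAU^*$, reduce $h(x,t)=S(x,t)g-iBg$ to $P_1h=-T(-2x,-2t)\widetilde{M}h-iT(-2x,-2t)\widetilde{\rho}(B-B^*)g$ under the normalizations $\widetilde{M}_1=I$, $\widehat{N}=I$, then insert the integral representation of $\widetilde{M}$ from \rf{32-12} and apply $\widetilde{\Phi}e^{i\xi B}$ to obtain \rf{84-1-6}. The steps and normalizations coincide with those in the paper (which likewise transcribes the argument of Theorem \ref{t10}), so the proposal is correct.
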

        \begin{proof}
Using the conditions \rf{84-1-0}, \rf{84-1-3}, and the form of the collective motions $T(x,t)$ and ${T^*}^{-1}(x,t)$ in the case when $\varepsilon=-1$, $\delta=1$ and $A=B^{-1}$ the operator function $\Gamma(x,t)=T(x,t)N+M_1{T^*}^{-1}(x,t)M$
takes the form
        \be\la{84-1-8}
        \Gamma(x,t)=e^{i(xB-tB^{-1})}N+M_1Ue^{-i(xB-tB^{-1})}U^*M=T(x,t)N+\widetilde{M}_1T(-x,-t)\widetilde{M}
        \ee
where $\widetilde{M}_1=M_1U$ and $\widetilde{M}=U^*M$. Then the solitonic combination $S(x,t)$ from \rf{84-1-4} takes the form
        \be\la{84-1-9}
        S(x,t)=i\Gamma^{-1}(x,t)M_1{T^*}^{-1}(x,t)(B^*M-MB)+iB.
        \ee
The relation \rf{84-1-9} together with the condition
        $
        B^*M-MB=-\rho(B-B^*)=-\rho(B+UBU^*)
        $
implies that
        \be\la{84-1-10}
        \ba{c}
        S(x,t)=iB+i\Gamma^{-1}(x,t)\widetilde{M}_1T(-x,-t)U^*\rho(B-B^*)
        = \\ =
        iB-i\Gamma^{-1}(x,t)T(-x,-t)\widetilde{\rho}(B-B^*)
        \ea
        \ee
where
we have used the additional assumption $\widetilde{M}_1=I$, i.e. $M_1=U^*$. Then
        \be\la{84-1-11}
        h(x,t)=S(x,t)g-iBg  \;\;\;\;\;\;\; (g\in G_B)
        \ee
and using \rf{84-1-10} we obtain
        $h(x,t)=-i\Gamma(x,t)T(-x,-t)\widetilde{\rho}(B-B^*)g$.
The last equality implies that
        $\Gamma(x,t)h(x,t)=-iT(-x,-t)\widetilde{\rho}(B-B^*)g$,
i.e.
        \be\la{84-1-12}
        \ba{c}
        Nh(x,t)=-T^{-1}(x,t)T(-x,-t)\widetilde{M}h(x,t)-iT^{-1}(x,t)T(-x,-t)\widetilde{\rho}(B-B^*)g
        = \\ =
        -T(-2x,-2t)\widetilde{M}h(x,t)-iT(-2x,-2t)\widetilde{\rho}(B-B^*)g.
        \ea
        \ee
On the other hand from the conditions \rf{84-1-3}
it follows that the constant operator $N$ and $M$ can be presented in the form
        \be\la{84-1-13}
        N=\widehat{N}(I+\widehat{B})/2=\widehat{N}P_1, \;\;\;\;
        M=\widehat{M}(I-\widehat{B})/2=\widehat{M}P_2
        \ee
where $P_1$, $P_2$ are orthoprojectors.
        \par
        Without loss of generality we can assume that $\widehat{N}=I$ and consequently the equality \rf{84-1-12} take the form
        $P_1h(x,t)=-T(-2x,-2t)\widetilde{M}h(x,t)-iT(-2x,-2t)\widetilde{\rho}(B-B^*)g$.
Now using the representation of the operator $M$ from \rf{32-12}
we obtain that
        $$
        \widetilde{M}=U^*M=U^*U\int\limits_0^{\infty}e^{i\eta B}\widetilde{\rho}\widetilde{\Phi}^*\sigma_B\widetilde{\Phi}e^{i\eta B}d\eta
        $$
and consequently
        $$
        \ba{c}
        v_{P_1h}(\xi)=\widetilde{\Phi}e^{i\xi B}P_1h(x,t)=
        -\int\limits_0^{\infty}\widetilde{\Phi}T(-2x+\xi+\eta,-2t)\widetilde{\rho}\widetilde{\Phi}^*\sigma_Bv_h(\eta)d\eta
        + \\ +
        \widetilde{\Phi}T(-2x+\xi,-2t)\widetilde{\rho}\widetilde{\Phi}^*\sigma_Bv_g(0)
        = \\ =
        -\int\limits_0^{\infty}\Psi(-2x+\xi+\eta,-2t)v_h(\eta)d\eta
        +\Psi(-2x+\xi ,-2t)v_g(0),
        \ea
        $$
where $\Psi(x,t)$ is defined by \rf{84-1-7}.
The theorem is proved.
        \end{proof}
        \par
Now we consider the Davey-Stewartson equation. In this case we consider again the operator $B$ presented as a coupling of dissipative and antidissipative operators with absolutely continuous real spectra. Without loss of generality we can assume that $B$ is the triangular model \rf{21-1} on the Hilbert space $H=\mathbf{L}^2(\Delta,\mathbb{C}^p)$ with $\Delta=[-l,l]$. Let $B^*=-UBU^*$, $U:H\longrightarrow H$, $U^*U=UU^*=I$. Let the operators $A_1=B$, $A_2=\alpha B$, $A_3=\beta B^2$ be embedded in the following commutative regular colligation from the form \rf{14-0} when $n=3$, i.e.
        \be\la{84-3}
        \ba{c}
        X=(A_1=B,A_2=\alpha B,A_3=\beta B^2;H=\mathbf{L}^2(\Delta;\mathbb{C}^p),\widetilde{\Phi},\mathbb{C}^{2m};
        \\
        \sigma_1,\sigma_2,\sigma_3,\gamma_{13},\gamma_{23},\widetilde{\gamma}_{13},\widetilde{\gamma}_{23})
        \ea
        \ee
where $\alpha, \beta\in\mathbb{R}$,
        $$
        \frac{A_1-A_1^*}{i}f=\frac{B-B^*}{i}f=\Phi^*L\Phi f=\widetilde{\Phi}^*\sigma_1\widetilde{\Phi}f, \;\;\;\;
        \frac{A_2-A_2^*}{i}f=\alpha\frac{B-B^*}{i}f
        =\widetilde{\Phi}^*\sigma_2\widetilde{\Phi}f,
        $$
        $$
        \frac{A_3-A_3^*}{i}f=\beta\frac{B^2-{B^2}^*}{i}f
        =\widetilde{\Phi}^*\sigma_3\widetilde{\Phi}f, \;\;\;\;
        \frac{A_1A_3^*-A_3A_1^*}{i}f=\beta B\frac{B^*-B}{i}B^*f
        =\widetilde{\Phi}^*\gamma_{13}\widetilde{\Phi}f,
        $$
        $$
        \frac{A_2A_3^*-A_3A_2^*}{i}f=\alpha\beta B\frac{B^*-B}{i}B^*f
        =\widetilde{\Phi}^*\gamma_{23}\widetilde{\Phi}f.
        $$
$2m\times 2m$ matrices $\sigma_1$, $\sigma_2$, $\sigma_3$, $\gamma_{13}$, $\gamma_{23}$ and the operator
$\widetilde{\Phi}:H\longrightarrow\mathbb{C}^{2m}$ have been obtained by the author in \cite{JMAA} and they have the representations
        \be\la{84-9}
        \ba{c}
        \widetilde{\Phi}=(\Phi \;\;\; \Phi B^*), \;\;\;
        \sigma_1=
        \left(
            \ba{cc}
            L   &  0 \\
            0  &  0
            \ea
        \right), \;\;\;
        \sigma_2=
        \left(
            \ba{cc}
            \alpha L   &  0 \\
            0  &  0
            \ea
        \right), \;\;\;
        \sigma_3=
        \left(
            \ba{cc}
            0        &  \beta L \\
            \beta L  &  0
            \ea
        \right),
        \\
        \gamma_{13}=
        \left(
            \ba{cc}
            0   &  0 \\
            0  &  -\beta L
            \ea
        \right), \;\;\;
        \gamma_{23}=
        \left(
            \ba{cc}
            0   &  0 \\
            0  &  -\alpha\beta L
            \ea
        \right).
        \ea
        \ee
        \par
Let us consider now the corresponding generalized  open system \rf{7} in the case when $n=3$ and $\varepsilon_1=1$,
$\varepsilon_2=\varepsilon_3=i$. The corresponding collective motions \rf{15} have the form
        \be\la{84-10}
        \ba{c}
        T(x,y,t)=e^{i(Bx+i\alpha By+i\beta B^2t)}
        \\
        {T^*}^{-1}(x,y,t)=e^{i(B^*x-i\alpha B^*y-i\beta {B^*}^2t)}
        = \\ =
        Ue^{i(-Bx+i\alpha By-i\beta B^2t)}U^*=
        UT(-x,y,-t)U^*,
        \ea
        \ee
using the condition $B^*=-UBU^*$.
From Theorem 6.1 in \cite{JMAA} it follows that the solitonic combination
        \be\la{84-11}
        S(x,y,t)=\Gamma^{-1}(x,y,t)\Gamma_x(x,y,t)
        \ee
with an operator function $\Gamma(x,y,t)$ from the form
        \be\la{84-12}
        \Gamma(x,y,t)=CT(x,y,t)N+M_1{T^*}^{-1}(x,y,t)M
        \ee
satisfies the nonlinear Davey-Stewartson equation
        \be\la{84-13}
        \left\{
        \ba{l}
        u=[S,\widetilde{B}], \;\;\;  v=\{S,\widetilde{B}\}
        \\
        -\frac{2}{\beta}\frac{\partial}{\partial t}u\widetilde{B}+
        \left(
        \left(-\frac{1}{i\alpha}\frac{\partial}{\partial y}\right)^2+\frac{\partial^2}{\partial x^2}
        \right)u
        +2u^3-2\{u,\frac{\partial}{\partial x}v\}=0
        \\
        -\frac{1}{i\alpha}\frac{\partial}{\partial y}v+\frac{\partial}{\partial x}v\widetilde{B}=u^2,
        \ea
        \right.
        \ee
where the operators $C$, $N$, $M_1$, $M$, $\widetilde{B}$ in \rf{84-12} are constant linear bounded operators on $H$ satisfying the conditions
        \be\la{84-14}
        N\widetilde{B}=N,\;\;\;M\widetilde{B}=-M,\;\;\; \widetilde{B}^2=I.
        \ee
If the operator $M$ has the representation \rf{32-12} then
        $P_{G_B}S(x,y,t)\left|_{G_B}\right.$ is a scalar solution of the Davey-Stewartson equation, where $P_{G_B}$ is an orthogonal projector onto the nonhermitian subspace $G_B=(Im\;B)H$ of the operator $B$.
        \par
Next from \rf{84-12}, \rf{84-10} and the form \rf{32-12} of the operator $M$ it follows that the solitonic combination $S(x,y,t)$ from \rf{84-11} can be written in the form
        $$
        S(x,y,t)=iB-i\Gamma^{-1}(x,y,t)\widetilde{M}_1T(-x,y,-t)\widetilde{\rho}(B-B^*)g
        $$
where we have denoted
        $\widetilde{M}_1=M_1U$, $\widetilde{\rho}=U^*\rho$.
The conditions \rf{84-14} imply that $N$, $M$ satisfy the relations \rf{84-1-13}, where $P_1$, $P_2$ are orthoprojectors. 
        \par
Let now suppose that $\widehat{N}=I$, $C=I$, and $\widetilde{M}_1=I$. It turns out that the mode of the vector function $h(x,y,t)$ defied by the equality
        $h(x,y,t)=S(x,y,t)g-iBg$
satisfies an integral equation, determined in the next theorem.
        \begin{thm}\la{t12-2}
        Let the operator $B$ in the Hilbert space $H=\mathbf{L}^2(\Delta,\mathbb{C}^p)$ be a coupling \rf{21-1} of dissipative and antidissipative operators with absolutely continuous real spectra, let the operator $B$ satisfy the condition
        $$
        B^*=-UBU^* \;\;\;\;\; (U:H\longrightarrow H, \;\; U*U=UU^*=I).
        $$
        Let the operators
        $A_1=B$, $A_2=\alpha B$, $A_3=\beta B^2$ $(\alpha,\beta\in\mathbb{R})$
        be embedded in the colligation $X$ from \rf{84-3}. Then the mode
        $$
        v_h(\xi)=\widetilde{\Phi}e^{i\xi B}h(x,y,t)
        $$
        of the element $h(x,y,t)=S(x,y,t)g-iBg$ ($g\in G_B$), where $S(x,y,t)$, is a solution \rf{84-12} (with $N$, $M$, $\widetilde{B}$ satisfying the conditions \rf{84-14}) of the nonlinear Davey-Stewartson equation \rf{84-13}
        satisfies the integral equation
        \be\la{84-15}
        v_{P_1h}(\xi)=
        -\int\limits_0^{\infty}\Psi(-2x+\xi+\eta,0,-2t)v_h(\eta)d\eta+
        \Psi(-2x+\xi,0,-2t)v_g(0),
        \ee
where
        \be\la{84-15-1}
        \Psi(x,y,t)=
        \widetilde{\Phi}e^{i(xB+i\alpha yB+i\beta tB^2)}\widetilde{\rho}\widetilde{\Phi}^*\sigma_B
        =\widetilde{\Phi}T(x,y,t)\widetilde{\rho}\widetilde{\Phi}^*\sigma_B,
        \ee
$P_1$ is defined by \rf{84-1-13} and $\widehat{N}=I$.
        \end{thm}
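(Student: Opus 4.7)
The plan is to follow the pattern of the proofs of Theorems~\ref{t6}, \ref{t10}, and \ref{t12-1}, adapting it to the three-variable collective motion \rf{84-10}. First, using the condition $B^*=-UBU^*$ and the identity ${T^*}^{-1}(x,y,t)=UT(-x,y,-t)U^*$ from \rf{84-10}, I would rewrite the operator function $\Gamma(x,y,t)$ from \rf{84-12} in the form
$$\Gamma(x,y,t)=CT(x,y,t)N+\widetilde{M}_1T(-x,y,-t)\widetilde{M},$$
where $\widetilde{M}_1=M_1U$, $\widetilde{M}=U^*M$. Notice that the sign flip in the argument of $T$ appears in the $x$ and $t$ entries but not in $y$, because the $y$-entry of the exponent is $i\cdot i\alpha By=-\alpha By$, which is invariant under conjugation by $U$. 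Under the standing assumptions $C=I$, $\widehat{N}=I$ (so $N=P_1$ by \rf{84-1-13}), and $\widetilde{M}_1=I$, the expression for $S(x,y,t)$ displayed immediately before the theorem yields the operator identity
$$\Gamma(x,y,t)h(x,y,t)=-iT(-x,y,-t)\widetilde{\rho}(B-B^*)g.$$

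Next I would multiply on the left by $T^{-1}(x,y,t)$ and use the commuting functional calculus of $B$ to verify the key cancellation
$$T^{-1}(x,y,t)T(-x,y,-t)=T(-2x,0,-2t),$$
the $y$-contributions $-i\cdot i\alpha By$ coming from $T^{-1}(x,y,t)$ and $+i\cdot i\alpha By$ coming from $T(-x,y,-t)$ adding to zero, while the $x$ and $t$ contributions double. This produces
$$P_1h(x,y,t)=-T(-2x,0,-2t)\widetilde{M}h(x,y,t)-iT(-2x,0,-2t)\widetilde{\rho}(B-B^*)g.$$

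Then, invoking the representation of $\widetilde{M}$ derived exactly as in \rf{77},
$$\widetilde{M}=U^*M=\int\limits_0^{\infty}e^{i\eta B}\widetilde{\rho}\,\widetilde{\Phi}^*\sigma_B\widetilde{\Phi}\,e^{i\eta B}\,d\eta,$$
whose convergence and explicit form rest on $B^*=-UBU^*$ together with the existence of the wave operator \rf{21} for couplings of dissipative and antidissipative operators, I would apply $\widetilde{\Phi}e^{i\xi B}$ to both sides. Moving the factors under the integral, rewriting $\widetilde{\Phi}e^{i\xi B}T(-2x,0,-2t)e^{i\eta B}=\widetilde{\Phi}T(-2x+\xi+\eta,0,-2t)$, and using the notation $v_h(\eta)=\widetilde{\Phi}e^{i\eta B}h$ and $v_g(0)=\widetilde{\Phi}g$ exactly as in the derivation \rf{79}, one obtains \rf{84-15} with $\Psi$ defined by \rf{84-15-1}.

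The main obstacle, and the one structural feature that distinguishes the Davey-Stewartson triplet from the two-variable Schr\"odinger and Sine-Gordon analogues, is justifying the cancellation $T^{-1}(x,y,t)T(-x,y,-t)=T(-2x,0,-2t)$; this is precisely what forces the second argument of $\Psi$ in \rf{84-15} to be $0$ rather than some function of $y$. Once this cancellation is in hand, the remainder of the proof is a direct three-variable transcription of the calculation performed in Theorem~\ref{t10}, and no new ingredients beyond those already used in Section~\ref{s4} are required.
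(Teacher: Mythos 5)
Your proposal is correct and follows exactly the route the paper intends: the paper gives no separate argument for Theorem \ref{t12-2}, stating only that the proof is analogous to that of Theorem \ref{t12-1}, and your three-variable transcription of that argument is the intended one. Your explicit verification of the cancellation $T^{-1}(x,y,t)T(-x,y,-t)=T(-2x,0,-2t)$ (the $i\cdot i\alpha By$ terms cancelling while the $x$ and $t$ terms double) is the one genuinely new ingredient, and it correctly accounts for the second argument of $\Psi$ being $0$ in \rf{84-15}.
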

        \par
        The proof is analogous to the proof of Theorem \ref{t12-1}.
        \par
The operators $\Psi(x,t)$ and $\Psi(x,y,t)$ from \rf{84-1-7} and \rf{84-15-1} correspondingly have the representation as in Theorem \ref{t7} and Theorem \ref{t7-1} with the help of the complete characteristic function of the colligation \rf{84-1-1} and the colligation \rf{84-3} or with the help of the characteristic operator function of the operator $A$ and the operator $A_3$ correspondingly.
If the mode $\varphi(\xi)=v_{P_1h}(\xi)$ satisfies the integral equation \rf{84-1-6} or \rf{84-15} correspondingly then $h(x,t)+iBg$ or $h(x,y,t)+iBg$ are solutions of the corresponding nonlinear differential equations: the Sine-Gordon equation and the Davey-Stewartson equation.
        \par
Finally, it has to mention also that the mode $v_h(\xi)=\widetilde{\Phi}e^{i\xi B}h$ determines uniquely the output $v_h(\xi,\tau)$ and $v_h(\xi,\zeta,\tau)$ satisfying the differential equation (the matrix wave equations)
        $$
        \left\{
        \ba{l}
        \sigma_B\left(-i\frac{1}{\varepsilon}\frac{\partial v}{\partial \tau}\right)-
        \sigma_A\left(-i\frac{1}{\delta}\frac{\partial v}{\partial \xi}\right)+\widetilde{\gamma}v=0
        \\
        v(\xi,0)=v_h(\xi)
        \ea
        \right.
        $$
(when $\varepsilon=-1$, $\delta=1$) and
        $$
        \left\{
        \ba{l}
        \sigma_3\left(-i\frac{1}{\varepsilon_1}\frac{\partial v}{\partial \xi}\right)-
        \sigma_1\left(-i\frac{1}{\varepsilon_3}\frac{\partial v}{\partial \tau}\right)+\widetilde{\gamma}_{13}v=0
        \\
        \sigma_3\left(-i\frac{1}{\varepsilon_2}\frac{\partial v}{\partial \zeta}\right)-
        \sigma_2\left(-i\frac{1}{\varepsilon_3}\frac{\partial v}{\partial \tau}\right)+\widetilde{\gamma}_{23}v=0
        \\
        v(\xi,0,0)=v_h(\xi)
        \ea
        \right.
        $$
(when $\varepsilon_1=1$, $\varepsilon_2=i$, $\varepsilon_3=i$) using Theorem \ref{t3}.
%
%
%
%
        \section{A special case of the input and the output of the open system, corresponding to the Schr\"{o}dinger equation}\la{s6}
        \par
In this section we consider the generalized open system, connected with obtaining of solutions of the nonlinear Schr\"{o}dinger equation in the special case of separated variables in the input, the internal state and the output. We derive what kind of differential equations are satisfied by the components of the input and the output of the corresponding open  system.
        \par
Let the operators $A$, $B$, the collective motions \rf{58}, the colligation $X$ from \rf{81-1} be like in Section \ref{s4}. Let us consider the generalized open system, corresponding to $X$ (in the case when $\varepsilon=i$, $\delta=1$) from the form \rf{59}, i.e.
        \be\la{84-200}
        \left\{
        \ba{l}
        \frac{\partial}{\partial t}f+Af=\widetilde{\Phi}^*\sigma_Au   \\
        i\frac{\partial}{\partial x}f+Bf=\widetilde{\Phi}^*\sigma_Bu   \\
        v=u-i\widetilde{\Phi}f,
        \ea
        \right.
        \ee
where $u=u(x,t)$, $v=v(x,t)$, $f=f(x,t)$ are the input, the output and the state of the system.
        \par
From Theorem 3.3 in \cite{JMAA} it follows  that for the comutative regular colligation $X$ the collective motions are compatible if and only if the input and the output satisfy the following partial differential equations (or the matrix wave equations) correspondingly
        \be\la{85}
        \sigma_B\left(-i \frac{1}{\varepsilon} \frac{\partial u}{\partial t}\right)
        -\sigma_A\left(-i \frac{1}{\delta} \frac{\partial u}{\partial x}\right)+\gamma u=0,
        \ee
        \be\la{86}
        \sigma_B\left(-i \frac{1}{\varepsilon} \frac{\partial v}{\partial t}\right)
        -\sigma_A\left(-i \frac{1}{\delta} \frac{\partial v}{\partial x}\right)+\widetilde{\gamma}v=0.
        \ee
The equations \rf{85} and \rf{86} in the case when $\varepsilon=i$, $\delta=1$ have the form
        \be\la{87}
        -\sigma_B\frac{\partial u}{\partial t}
        +i\sigma_A\frac{\partial u}{\partial x}+\gamma u=0,
        \ee
        \be\la{88}
        -\sigma_B\frac{\partial v}{\partial t}
        +i\sigma_A\frac{\partial v}{\partial x}+\widetilde{\gamma} v=0.
        \ee
Let us consider now the special case of separated variables in the input, the output and the state when
        \be\la{89}
        u(x,t)=e^{-\lambda t}u_{\lambda}(x), \;\;\;\;v(x,t)=e^{-\lambda t}v_{\lambda}(x), \;\;\;\;f(x,t)=e^{-\lambda t}f_{\lambda}(x).
        \ee
Then the open system \rf{84-200} takes the form
        \be\la{90}
        \left\{
        \ba{l}
        -\lambda f_{\lambda}(x)+Af_{\lambda}(x)=\widetilde{\Phi}\sigma_Au_{\lambda}(x)   \\
        i\frac{df_{\lambda}}{dx}+Bf_{\lambda}(x)=\widetilde{\Phi}\sigma_Bu_{\lambda}(x)   \\
        v_{\lambda}(x)=u_{\lambda}(x)-i\widetilde{\Phi}f_{\lambda}(x).
        \ea
        \right.
        \ee
Consequently,
        $f_{\lambda}(x)=(A-\lambda I)^{-1}\widetilde{\Phi}\sigma_Au_{\lambda}(x)$
and
        \be\la{92}
        v_{\lambda}(x)=u_{\lambda}(x)-i\widetilde{\Phi}(A-\lambda I)^{-1}\widetilde{\Phi}\sigma_Au_{\lambda}(x)=W_A(\lambda)u_{\lambda}(x),
        \ee
where $W_A(\lambda)$ is the characteristic operator function of the operator $A$ ($\lambda$ does not belong to the spectrum of the operator $A$, i.e. $\lambda$ is a regular point of the characteristic operator function of $A$).
        \par
It has to be mentioned that the compatibility conditions \rf{87} and \rf{88} take the form
        \be\la{93}
        \lambda\sigma_Bu_{\lambda}+i\sigma_A\frac{du_{\lambda}}{dx}+\gamma u_{\lambda}=0,
        \ee
        \be\la{94}
        \lambda\sigma_Bv_{\lambda}+i\sigma_A\frac{dv_{\lambda}}{dx}+\widetilde{\gamma}v_{\lambda}=0.
        \ee
        \par
The form \rf{66} of the operators $\sigma_A$, $\sigma_B$, $\gamma$, $\widetilde{\Phi}$, the condition \rf{32-7},
and direct calculations show that the matrix $\widetilde{\gamma}$ has the representation
        \be\la{95}
        \widetilde{\gamma}=
        \left(
        \ba{cc}
        -ibL(\pi_{12}-\pi^*_{12})L       &    -ibL\pi_{11}L   \\
        ibL\pi_{11}L                   &    bL
        \ea
        \right),
        \ee
where we have used the block representation of the selfadjoint matrix
        \be\la{96}
        \widetilde{\Phi}\widetilde{\Phi}^*h=h
        \left(
        \ba{cc}
        \pi_{11}  &  \pi_{12}  \\
        \pi_{12}^*  &  \pi_{22}
        \ea
        \right),
        \ee
where the matrices $\pi_{11}$, $\pi_{22}$ satisfy
        $\pi_{11}^*=\pi_{11}$, $\pi_{22}^*=\pi_{22}$ and they do not depend on $x$.
Let us denote
        \be\la{98}
        u_{\lambda}(x)=(u_1(x),u_2(x)), \;\;\;\;\; v_{\lambda}(x)=(v_1(x),v_2(x)).
        \ee
Then the equality \rf{94} takes the form
        $$
        \ba{c}
        \lambda(v_1,v_2)
        \left(
        \ba{cc}
        L   &   0  \\
        0   &   0
        \ea
        \right)
        +i\left(\frac{dv_1}{dx},\frac{dv_2}{dx}\right)
        \left(
        \ba{cc}
        0   &   bL  \\
        bL  &   0
        \ea
        \right)
        + \\ +
        (v_1,v_2)
        \left(
        \ba{cc}
        -ibL(\pi_{12}-\pi^*_{12})L       &    -ibL\pi_{11}L   \\
        ibL\pi_{11}L                     &    bL
        \ea
        \right)=0,
        \ea
        $$
i.e.
        \be\la{99}
        \ba{c}
        \left(\lambda v_1L+iB\frac{dv_2}{dx}L-ibv_1L(\pi_{12}-\Pi_{12}^*)L+ibv_2L\pi_{11}L,
        \right.
        \\
        \left.
        ib\frac{dv_1}{dx}L-ibv_1L\pi_{11}L+bv_2L\right)=0,
        \ea
        \ee
which is equivalent to the equations
        \be\la{100}
        \left\{
        \ba{l}
        \lambda v_1+ib\frac{dv_2}{dx}-ibv_1L(\pi_{12}-\Pi_{12}^*)+ibv_2L\pi_{11}=0
        \\
        i\frac{dv_1}{dx}-iv_1L\pi_{11}+v_2=0.
        \ea
        \right.
        \ee
The equations \rf{100} and straightforward calculations show that $v_1(x)$ satisfies the equation
        \be\la{101}
        -\frac{d^2v_1}{dx^2}+v_1((L\pi_{11})^2+iL(\pi_{12}-\pi_{12}^*))=\frac{\lambda}{b}v_1
        \ee
If we suppose the additional condition 
        \be\la{102}
        \pi_{11}L(\pi_{12}-\pi_{12}^*)=(\pi_{12}-\pi_{12}^*)\pi_{11}
        \ee
(concerning the operator $\widetilde{\Phi}$ and the representation\rf{96}) analogously from the equations \rf{100} it follows that $v_2(x)$ satisfies the equation
        \be\la{103}
        -\frac{d^2v_2}{dx^2}+v_2((L\pi_{11})^2+iL(\pi_{12}-\pi_{12}^*))=\frac{\lambda}{b}v_2.
        \ee
        \par
Consequently in the case when the matrices $\pi_{11}$, $\pi_{12}$ satisfy the condition \rf{102}, the output of the system
        $v(x,t)=e^{-\lambda t}v_{\lambda}(x)=(e^{-\lambda t}v_{1}(x),e^{-\lambda t}v_{2}(x))$
satisfies the equations
        \be\la{104}
        \ba{c}
        -\frac{d^2v_1}{dx^2}+v_1((L\pi_{11})^2+iL(\pi_{12}-\pi_{12}^*))=\frac{\lambda}{b}v_1 \\
        -\frac{d^2v_2}{dx^2}+v_2((L\pi_{11})^2+iL(\pi_{12}-\pi_{12}^*))=\frac{\lambda}{b}v_2.
        \ea
        \ee
        Analogously for the vector function $u_{\lambda}(x)=(u_1(x),u_2(x))$ using the compatible condition (or matrix wave equation) \rf{93}, the form \rf{66} of the matrices $\sigma_A$, $\sigma_B$, $\gamma$, the representation \rf{96} of the operator $\widetilde{\Phi}$ we obtain
        $$
        \ba{c}
        \lambda(u_1,u_2)
        \left(
        \ba{cc}
        L   &   0  \\
        0   &   0
        \ea
        \right)
        +i\left(\frac{du_1}{dx},\frac{du_2}{dx}\right)
        \left(
        \ba{cc}
        0   &   bL  \\
        bL  &   0
        \ea
        \right)
        + \\ +
        (u_1,u_2)
        \left(
        \ba{cc}
        0       &    0   \\
        0       &    bL
        \ea
        \right)=0,
        \ea
        $$
Hence
        \be\la{105}
        (\lambda u_1L,0)+\left(i\frac{du_2}{dx}bL,i\frac{du_1}{dx}bL\right)+(0,bu_2L)=0,
        \ee
i.e. the vector functions $u_1$, $u_2$ satisfy the system
        \be\la{106}
        \left\{
        \ba{l}
        \lambda u_1+ib\frac{du_2}{dx}=0  \\
        ib\frac{du_1}{dx}+bu_2=0.
        \ea
        \right.
        \ee
From the equations \rf{106} it follows that the input
        $u(x,t)=e^{-\lambda t}u_{\lambda}(x)=(e^{-\lambda t}u_{1}(x),e^{-\lambda t}u_{2}(x))$
of the open system \rf{84} satisfies the equations
        \be\la{107}
        \left\{
        \ba{c}
        -\frac{d^2u_1}{dx^2}=\frac{\lambda}{b}u_1  \\
        -\frac{d^2u_2}{dx^2}=\frac{\lambda}{b}u_2.
        \ea
        \right.
        \ee
Consequently we prove the next theorem in the case of separated variables of the input, the internal state and the output  of the open system \rf{84} (when the operators $A$, $B$ do not depend on $x$ and $t$).
        \begin{thm}\la{t13}
        The input $u(x,t)=e^{-\lambda t}u_{\lambda}(x)=(e^{-\lambda t}u_{1}(x),e^{-\lambda t}u_{2}(x))$  and  the output
        $v(x,t)=e^{-\lambda t}v_{\lambda}(x)=(e^{-\lambda t}v_{1}(x),e^{-\lambda t}v_{2}(x))$
        of the open system \rf{84-200} satisfy the compatibility conditions
        $$
        \ba{c}
        \sigma_A\frac{du_{\lambda}}{dx}-i(\lambda\sigma_B+\gamma)u_{\lambda}=0  \\
        \sigma_A\frac{dv_{\lambda}}{dx}-i(\lambda\sigma_B+\widetilde{\gamma})v_{\lambda}=0
        \ea
        $$
and the characteristic operator function $W_A(\lambda)=I-i\widetilde{\Phi}(A-\lambda I)^{-1}\widetilde{\Phi}\sigma_A$ of the operator $A=bB^2$
maps the input $u(x,t)$, satisfying the equations
        $$
        \ba{c}
        -\frac{d^2u_1}{dx^2}=\frac{\lambda}{b}u_1  \\
        -\frac{d^2u_2}{dx^2}=\frac{\lambda}{b}u_2
        \ea
        $$
to the output $v(x,t)=W_A(\lambda)u(x,t)$ which components $v_1(x)$, $v_2(x)$ are solutions of the equations
        $$
        \ba{c}
        -\frac{d^2v_1}{dx^2}+v_1((L\pi_{11})^2+iL(\pi_{12}-\pi_{12}^*))=\frac{\lambda}{b}v_1 \\
        -\frac{d^2v_2}{dx^2}+v_2((L\pi_{11})^2+iL(\pi_{12}-\pi_{12}^*))=\frac{\lambda}{b}v_2.
        \ea
        $$
(when the operator $\widetilde{\Phi}$ satisfies the condition \rf{102}), and
        $
        v_{\lambda}(x)=W_A(\lambda)u_{\lambda}(x).
        $
        \end{thm}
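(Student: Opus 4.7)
The plan is to follow the thread of computations already laid out in the paragraphs preceding the statement, tying them together as a single argument. First I would substitute the separated-variable ansatz $u(x,t)=e^{-\lambda t}u_{\lambda}(x)$, $v(x,t)=e^{-\lambda t}v_{\lambda}(x)$, $f(x,t)=e^{-\lambda t}f_{\lambda}(x)$ into the open system \rf{84-200}. The first line then gives $(A-\lambda I)f_{\lambda}=\widetilde{\Phi}^{*}\sigma_{A}u_{\lambda}$, so that $f_{\lambda}=(A-\lambda I)^{-1}\widetilde{\Phi}^{*}\sigma_{A}u_{\lambda}$ for $\lambda$ a regular point of $A$, and substituting into the output equation yields at once $v_{\lambda}(x)=W_{A}(\lambda)u_{\lambda}(x)$. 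Inserting the ansatz into the matrix wave equations \rf{87}, \rf{88} produces the first-order systems \rf{93}, \rf{94}, which are the claimed compatibility conditions.

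Next I would plug the explicit $2m\times 2m$ block forms of $\sigma_{A}, \sigma_{B}, \gamma$ from \rf{66} into \rf{93} and read off the components. With $u_{\lambda}=(u_{1},u_{2})$ the block structure forces the decoupled first-order pair \rf{106}, from which differentiating the second equation and substituting the first gives $-u_{1}''=(\lambda/b)u_{1}$, and symmetrically $-u_{2}''=(\lambda/b)u_{2}$. This handles the input side with no potential.

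For the output I need the explicit form \rf{95} of $\widetilde{\gamma}$; I would derive it from the defining relation \rf{32-7}, writing $\widetilde{\Phi}\widetilde{\Phi}^{*}$ in the block form \rf{96} and computing $i(\sigma_{A}\widetilde{\Phi}\widetilde{\Phi}^{*}\sigma_{B}-\sigma_{B}\widetilde{\Phi}\widetilde{\Phi}^{*}\sigma_{A})$ using the selfadjointness of $\pi_{11}$ and $L$. With \rf{95} in hand, equation \rf{94} componentwise becomes the coupled first-order system \rf{100}. The second equation of \rf{100} expresses $v_{2}=iv_{1}L\pi_{11}-iv_{1}'$; differentiating the first and substituting eliminates $v_{2}$ and produces the Sturm-Liouville equation $-v_{1}''+v_{1}\bigl((L\pi_{11})^{2}+iL(\pi_{12}-\pi_{12}^{*})\bigr)=(\lambda/b)v_{1}$. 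The analogous equation for $v_{2}$ follows by the mirror elimination, but only after one commutes $\pi_{11}$ past $L(\pi_{12}-\pi_{12}^{*})$; this is precisely the content of the assumption \rf{102}.

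The main obstacle I anticipate is the derivation of \rf{95}: the block computation of $\widetilde{\gamma}$ via \rf{32-7} is the only place where the selfadjointness assumptions on the pieces of $\widetilde{\Phi}\widetilde{\Phi}^{*}$ really enter, and it is also where the asymmetry between the $v_{1}$ and $v_{2}$ equations arises, forcing condition \rf{102} for the $v_{2}$ equation. Once \rf{95} and \rf{96} are reconciled, the reduction from the coupled first-order systems \rf{100} and \rf{106} to the second-order equations is a routine elimination, and the factorization $v_{\lambda}=W_{A}(\lambda)u_{\lambda}$ obtained in the first step ties the two sides together as the intertwining stated in the theorem.
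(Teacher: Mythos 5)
Your proposal follows essentially the same route as the paper: substitute the separated-variable ansatz to obtain $f_{\lambda}=(A-\lambda I)^{-1}\widetilde{\Phi}^{*}\sigma_{A}u_{\lambda}$ and $v_{\lambda}=W_{A}(\lambda)u_{\lambda}$, reduce the wave equations to the first-order systems \rf{106} and \rf{100} using the block forms \rf{66}, \rf{95}, \rf{96}, and eliminate to get the second-order equations, invoking \rf{102} only for the $v_{2}$ equation. This matches the paper's argument step for step, so the proposal is correct.
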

%
%
%
%
        \section{The case when the operators $\mathbf{A}$ and $\mathbf{B}$ depend on the spatial variable $\mathbf{x}$ and the Schr\"{o}dinger equation}\la{s7}
The results, obtained in Section \ref{s6}, can be expanded in the case when the operators $A$ and $B$ depend on the spatial variable $x$. In other words we consider a special case of separated variables in the input, the state and the output  of the generalized open system, connected with obtaining of solutions of the nonlinear Schr\"{o}dinger equation when the operators $A$ and $B$ depend on the spatial variable $x$. We derive what kind of differential equations are satisfied by the components of the input and the output of the corresponding generalized open system.
        \par
Let us consider now regular colligations (or vessels) which depend on the spatial variable $x$. This is the case when the operators $A$, $B$, $\widetilde{\Phi}$ depend on the spatial variable $x$, i.e. the matrix function $\Pi(w)$ depends also on the variable $x$, i.e.
        $\Pi=\Pi(w,x)$.
        \par
Using the results of M.S. Liv\v sic in the article \cite{LIVSIC-VORTICES} (Section 3.5) it follows that the commuting operators $A(x)$, $B(x)$ are embedded in the strict colligation
        $$
        X=(A(x),B(x);H,\widetilde{\Phi},E=\mathbb{C}^{2m};
        \sigma_A,\sigma_B,\psi(x),\widetilde{\psi}(x)),
        $$
 with the corresponding colligation conditions (or vessel conditions)
        \be\la{108}
        (A(x)-A^*(x))/i=\widetilde{\Phi}^*(x)\sigma_A\widetilde{\Phi}(x), \;\;\;\;
        (B(x)-B^*(x))/i=\widetilde{\Phi}^*(x)\sigma_B\widetilde{\Phi}(x),
        \ee
        \be\la{110}
        \frac{1}{i}\sigma_A\frac{d\widetilde{\Phi}(x)}{dx}+\sigma_A\widetilde{\Phi}(x)B^*(x)-\sigma_B\widetilde{\Phi}(x)A^*(x)=\psi(x)\widetilde{\Phi}(x),
        \ee
        \be\la{111}
        \frac{1}{i}\sigma_A\frac{d\widetilde{\Phi}(x)}{dx}+\sigma_A\widetilde{\Phi}(x)B(x)-\sigma_B\widetilde{\Phi}(x)A(x)=
        \widetilde{\psi}(x)\widetilde{\Phi}(x),
        \ee
        \be\la{112}
        \widetilde{\psi}(x)=\psi(x)+i(\sigma_A\widetilde{\Phi}(x)\widetilde{\Phi}^*(x)\sigma_B-
        \sigma_B\widetilde{\Phi}(x)\widetilde{\Phi}^*(x)\sigma_A).
        \ee
The corresponding generalized open system have the form
        \be\la{113}
        \left\{
        \ba{l}
        \frac{\partial f(x,t)}{\partial t}+A(x)f(x,t)=\widetilde{\Phi}^*(x)\sigma_Au(x,t)
        \\
        i\frac{\partial f(x,t)}{\partial x}+B(x)f(x,t)=\widetilde{\Phi}^*(x)\sigma_Bu(x,t)
        \\
        v(x,t)=u(x,t)-i\widetilde{\Phi}(x)f(x,t)
        \ea
        \right.
        \ee
($t_0\leq t\leq t_1$, $x_0\leq x\leq x_1$). In this case the operators $\psi(x)$ and $\widetilde{\psi}(x)$ are selfadjoint.
        \par
Given the further consideration here we will present at first a general case. Let us consider a generalized open system with the form (according to the case when operators $A$, $B$ do not depend on the variables $x$ and $t$)
        \be\la{113-1}
        \left\{
        \ba{l}
        i\frac{1}{\varepsilon}\frac{\partial f(x,t)}{\partial t}+A(x)f(x,t)=\widetilde{\Phi}^*(x)\sigma_Au(x,t)
        \\
        i\frac{1}{\delta}\frac{\partial f(x,t)}{\partial x}+B(x)f(x,t)=\widetilde{\Phi}^*(x)\sigma_Bu(x,t)
        \\
        v(x,t)=u(x,t)-i\widetilde{\Phi}(x)f(x,t)
        \ea
        \right.
        \ee
($t_0\leq t\leq t_1$, $x_0\leq x\leq x_1$, $\varepsilon,\delta\in\mathbb{C}$). The colligation conditions now have the form
        \be\la{113-1-1}
        (A(x)-A^*(x))/i=\widetilde{\Phi}^*(x)\sigma_A\widetilde{\Phi}(x), \;\;\;\;
        (B(x)-B^*(x))/i=\widetilde{\Phi}^*(x)\sigma_B\widetilde{\Phi}(x),
        \ee
        \be\la{113-2}
        \frac{1}{i\overline{\delta}}\sigma_A\frac{d\widetilde{\Phi}(x)}{dx}+\sigma_A\widetilde{\Phi}(x)B^*(x)-\sigma_B\widetilde{\Phi}(x)A^*(x)=\psi(x)\widetilde{\Phi}(x),
        \ee
        \be\la{113-3}
        \frac{1}{i\delta}\sigma_A\frac{d\widetilde{\Phi}(x)}{dx}+\sigma_A\widetilde{\Phi}(x)B(x)-\sigma_B\widetilde{\Phi}(x)A(x)=
        \widetilde{\psi}(x)\widetilde{\Phi}(x),
        \ee
        \be\la{113-4}
        \widetilde{\psi}(x)=\psi(x)+i(\sigma_A\widetilde{\Phi}(x)\widetilde{\Phi}^*(x)\sigma_B-
        \sigma_B\widetilde{\Phi}(x)\widetilde{\Phi}^*(x)\sigma_A).
        \ee
Using the results of M.S. Liv\v sic in \cite{LIVSIC-VORTICES} we obtain that
\\
        \be\la{113-5}
        \frac{i}{\delta}\frac{dA(x)}{dx}f=A(x)B(x)f-B(x)A(x)f.
        \ee
        \begin{thm}\la{t14}
        The input $u(x,t)$ and the output $v(x,t)$ of the open system \rf{113-1} with $A(x)B(x)=B(x)A(x)$ satisfy the strong compatibility conditions (or matrix wave equations)
        \be\la{113-6}
        \sigma_B\left(-i\frac{1}{\varepsilon}\frac{\partial u}{\partial t}(x,t)\right)-
        \sigma_A\left(-i\frac{1}{\delta}\frac{\partial u}{\partial x}(x,t)\right)+\psi(x)u(x,t)=0,
        \ee
        \be\la{113-7}
        \sigma_B\left(-i\frac{1}{\varepsilon}\frac{\partial v}{\partial t}(x,t)\right)-
        \sigma_A\left(-i\frac{1}{\delta}\frac{\partial v}{\partial x}(x,t)\right)+\widetilde{\psi}(x)v(x,t)=0.
        \ee
        \end{thm}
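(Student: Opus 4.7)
The plan is to mimic the proof of Theorem 3.3 in \cite{JMAA} (the $x$-independent analogue), but carefully tracking the extra $\widetilde{\Phi}'(x)$ terms that arise when the channel map depends on the spatial variable. Throughout, the colligation is taken to be strict, so that $\widetilde{\Phi}^*(x):E\to H$ is injective.

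\textbf{Step 1 (compatibility for $u$).} From the first two equations in \rf{113-1} I would isolate
$\partial_t f = i\varepsilon A(x)f - i\varepsilon\widetilde{\Phi}^*(x)\sigma_A u$
and
$\partial_x f = i\delta B(x)f - i\delta\widetilde{\Phi}^*(x)\sigma_B u$,
then compute $\partial^2 f/\partial t\partial x$ and $\partial^2 f/\partial x\partial t$ and equate them. After cancelling the $-\varepsilon\delta\,A(x)B(x)f$ against $-\varepsilon\delta\,B(x)A(x)f$ using the hypothesis $A(x)B(x)=B(x)A(x)$ (which combined with \rf{113-5} also kills the stray $i\varepsilon A'(x)f$ term), what remains is an identity of the form
$$
i\delta\,\widetilde{\Phi}^*(x)\sigma_B\partial_t u - i\varepsilon\,\widetilde{\Phi}^*(x)\sigma_A\partial_x u + \bigl[\varepsilon\delta\,A(x)\widetilde{\Phi}^*(x)\sigma_B - \varepsilon\delta\,B(x)\widetilde{\Phi}^*(x)\sigma_A - i\varepsilon\,\widetilde{\Phi}^{*\prime}(x)\sigma_A\bigr]u = 0.
$$
The decisive move is now to take the adjoint of the colligation identity \rf{113-2}, which yields
$A(x)\widetilde{\Phi}^*(x)\sigma_B - B(x)\widetilde{\Phi}^*(x)\sigma_A - \tfrac{i}{\delta}\widetilde{\Phi}^{*\prime}(x)\sigma_A = -\widetilde{\Phi}^*(x)\psi(x)$.
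Substituting reduces the $u$-coefficient to a pure $-\varepsilon\delta\,\widetilde{\Phi}^*(x)\psi(x)u$, giving
$\widetilde{\Phi}^*(x)\bigl[\sigma_B(-i\varepsilon^{-1}\partial_t u) - \sigma_A(-i\delta^{-1}\partial_x u) + \psi(x)u\bigr]=0$,
and then strictness of the colligation removes $\widetilde{\Phi}^*(x)$ to give \rf{113-6}.

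\textbf{Step 2 (wave equation for $v$).} Having \rf{113-6}, I would substitute $v=u-i\widetilde{\Phi}(x)f$ into the left-hand side of \rf{113-7} and expand using the two expressions for $\partial_t f$ and $\partial_x f$. This produces three groups of terms: a ``$u$-wave'' group which equals $-\psi(x)u$ by \rf{113-6}, a quadratic-in-$\widetilde{\Phi}$ group involving $\sigma_A\widetilde{\Phi}(x)\widetilde{\Phi}^*(x)\sigma_B - \sigma_B\widetilde{\Phi}(x)\widetilde{\Phi}^*(x)\sigma_A$ acting on $u$, and an $f$-group containing $\sigma_A\widetilde{\Phi}(x)B(x) - \sigma_B\widetilde{\Phi}(x)A(x)$ and $\tfrac{1}{\delta}\sigma_A\widetilde{\Phi}'(x)$. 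The $u$-terms collapse to zero by the defining relation \rf{113-4} between $\widetilde{\psi}(x)$ and $\psi(x)$, and the $f$-terms collapse to zero by the colligation identity \rf{113-3}. What remains is exactly \rf{113-7}.

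\textbf{Main obstacle.} The only genuinely new feature compared to the $x$-independent case is the appearance of the derivative $\widetilde{\Phi}'(x)$ (equivalently $\widetilde{\Phi}^{*\prime}(x)$) in the mixed-partial calculation; this term is absent from the classical Livshits vessel identities but has been absorbed into \rf{113-2}--\rf{113-3} precisely so that it cancels the contribution produced by differentiating $\widetilde{\Phi}^*(x)\sigma_A u$ in $x$. Getting the signs and the factors $\varepsilon$, $\delta$, $\bar\delta$ to match is the bookkeeping heart of the argument, but once \rf{113-2} (for the $u$-equation) and \rf{113-3} together with \rf{113-4} (for the $v$-equation) are brought in at the right place, the cancellation is automatic. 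The use of strictness to cancel the outer $\widetilde{\Phi}^*(x)$ is standard and unavoidable.
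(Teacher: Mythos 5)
Your proposal is correct and follows essentially the same route as the paper's own proof: equate the mixed partials of $f$, eliminate $\partial_t f$ and $\partial_x f$ via the system equations, use $A(x)B(x)=B(x)A(x)$ together with \rf{113-5} to remove the $\frac{dA}{dx}f$ term, invoke the adjoint of \rf{113-2} to absorb the $\frac{d\widetilde{\Phi}^*}{dx}\sigma_A u$ term and cancel the outer $\widetilde{\Phi}^*$ by strictness, and then obtain the output equation by substituting $u=v+i\widetilde{\Phi}f$ and applying \rf{113-3} and \rf{113-4}. The sign and $\varepsilon,\delta,\overline{\delta}$ bookkeeping you describe checks out against the paper's equations \rf{118}--\rf{129}.
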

        \begin{proof}
Let the input $u(x,t)$, the inner state $f(x,t)$, and the output $v(x,t)$ satisfy the equations of the open system \rf{113-1}. From the equations of the open system \rf{113-1} it follows that
        \be\la{113-8}
        \ba{c}
        i\frac{\partial^2f(x,t)}{\partial x\partial t}+\varepsilon\frac{dA(x)}{dx}f(x,t)+\varepsilon A(x)\frac{\partial f(x,t)}{\partial x}
        = \\ =
        \varepsilon\frac{d\widetilde{\Phi}^*(x)}{dx}\sigma_Au(x,t)+\varepsilon\widetilde{\Phi}^*(x)\sigma_A\frac{\partial u(x,t)}{dx}
        \ea
        \ee
and
        \be\la{113-9}
        i\frac{\partial^2f(x,t)}{\partial t\partial x}+\delta B(x)\frac{df(x,t)}{dt}=\delta\widetilde{\Phi}^*(x)\sigma_B\frac{\partial u(x,t)}{dt}.
        \ee
Now the equality of mixed partials
        $
        \frac{\partial^2f(x,t)}{\partial x\partial t}=\frac{\partial^2f(x,t)}{\partial t\partial x}
        $
together with the equalities \rf{113-8} and \rf{113-9} show that
        \be\la{118}
        \varepsilon\frac{dA}{dx}f+\varepsilon A\frac{\partial f}{\partial x}-\delta B\frac{df}{dt}=
        \varepsilon\frac{d\widetilde{\Phi}^*}{dx}\sigma_Au+\varepsilon\widetilde{\Phi}^*\sigma_A\frac{\partial u}{dx}
        -\delta\widetilde{\Phi}^*\sigma_B\frac{\partial u}{dt}.
        \ee
But from \rf{113-1} it follows that
        \be\la{119}
        \frac{\partial f}{\partial t}=i\varepsilon Af-i\varepsilon\widetilde{\Phi}^*\sigma_Au, \;\;\;\; \frac{\partial f}{\partial x}=i\delta Bf-i\delta\widetilde{\Phi}^*\sigma_Bu,
        \ee
which together with \rf{118} implies that
        \be\la{122}
        -\widetilde{\Phi}^*\left(-\delta\sigma_B\frac{\partial u}{\partial t}+
        \varepsilon\sigma_A\frac{\partial u}{\partial x}\right)-\varepsilon\frac{d\widetilde{\Phi}^*}{dx}\sigma_Au
        +i\varepsilon\delta(-A\widetilde{\Phi}^*\sigma_Bu+B\widetilde{\Phi}^*\sigma_Au)=0.
        \ee
The equality \rf{122} and the colligation condition \rf{113-2} imply that
        \be\la{123-1}
        \widetilde{\Phi}^*\left(-\delta\sigma_B\frac{\partial u}{\partial t}
        +\varepsilon\sigma_A\frac{\partial u}{\partial x}\right)+
        \varepsilon\frac{dA}{dx}f-\varepsilon\frac{d\widetilde{\Phi}^*}{dx}\sigma_Au+
        i\varepsilon\delta\left(\frac{1}{i\delta}\frac{d\widetilde{\Phi}^*}{dx}\sigma_Au+\widetilde{\Phi}^*\psi u\right)=0.
        \ee
Consequently,
        \be\la{125}
        \widetilde{\Phi}^*\left(-\delta\sigma_B\frac{\partial u}{\partial t}+\varepsilon\sigma_A\frac{\partial u}{\partial x}
        -i\varepsilon\delta\psi u\right)-\varepsilon\frac{dA}{dx}f=0.
        \ee
On the other hand from \rf{113-5} we have that
        $i\frac{dA}{dx}=A(x)B(x)-B(x)A(x)=0$.
Then the equality \rf{125} implies that
        \be\la{127-1}
        -\delta\sigma_B\frac{\partial u}{\partial t}+\varepsilon\sigma_A\frac{\partial u}{\partial x}-i\varepsilon\delta\psi u=0
        \ee
which proves the equality \rf{113-6}.
        \par
Now from \rf{113-1} we have $u=v+i\widetilde{\Phi}f$ and using \rf{127-1} we obtain
        \be\la{128}
        -\delta\sigma_B\left(\frac{dv}{dt}+i\widetilde{\Phi}\frac{\partial f}{\partial t}\right)
        +\varepsilon\sigma_A\left(\frac{dv}{dx}+i\frac{d\widetilde{\Phi}}{dx}f+i\widetilde{\Phi}\frac{\partial f}{\partial x}\right)
        -\varepsilon\delta\psi(v+i\widetilde{\Phi}f)=0.
        \ee
Next from \rf{119} it follows that \rf{128} takes the form
        \be\la{129}
        \ba{c}
        -\delta\sigma_B\frac{dv}{dt}+\varepsilon\sigma_A\frac{dv}{dx}+\varepsilon\delta(\sigma_B\widetilde{\Phi}Af-\sigma_A\widetilde{\Phi}Bf)-
        \varepsilon\delta(\sigma_B\widetilde{\Phi}\widetilde{\Phi}^*\sigma_Au-\sigma_A\widetilde{\Phi}\widetilde{\Phi}^*\sigma_Bu)
        + \\ +
        i\varepsilon\sigma_A\frac{d\widetilde{\Phi}}{dx}f-i\varepsilon\delta\psi u=0.
        \ea
        \ee
Now the equality \rf{129} together with \rf{113-3} and \rf{113-4} gives that
        $-\delta\sigma_B\frac{dv}{dt}+\varepsilon\sigma_A\frac{dv}{dx}-i\varepsilon\delta\widetilde{\psi}v=0$,
which proves the equality \rf{113-7}. The theorem is proved.
        \end{proof}
        \par
Let us consider the operator $B(x)$, presented as a coupling of dissipative and antidissipative operators with real absolutely continuous spectra. Without loss of generality we can assume that the operator $B(x)$ is the triangular model \rf{21-1} in the Hilbert space $H=\mathbf{L}^2(\Delta;\mathbb{C}^p)$ where $\Delta=[-l,l]$. Let the operators $\Pi(w,x)$, $Q(w,x)$, $\widetilde{\Pi}(w,x)$, $L$, $\Phi(x)$ be like in Section \ref{s2}, but depending on the other variable $x$ ($x_0\leq x\leq x_1$). Let the operator $B(x)$ satisfy the condition
        $B^*=-UBU^*$ ($U:H\longrightarrow H$, $U^*U=UU^*=I$).
        \par
        Let the operators $A(x)=bB^2(x)$ ($b\in\mathbb{R}$)  and $B(x)$ be embedded in the colligation
        $$
        X=(A(x)=bB^2(x),B(x);H=\mathbf{L}^2(\Delta;\mathbb{C}^p),\widetilde{\Phi},E=\mathbb{C}^{2m};\sigma_A,\sigma_B,\psi(x),\widetilde{\psi}(x)),
        $$
where $\sigma_A$, $\sigma_B$, $\widetilde{\Phi}$ are defined by the equalities \rf{66} and $\widetilde{\Phi}=\widetilde{\Phi}(x)$.
        \par
Let us consider the case of generalized open system \rf{113-1} when $\varepsilon=i$, $\delta=1$
        \be\la{129-1}
        \left\{
        \ba{l}
        \frac{\partial f(x,t)}{\partial t}+A(x)f(x,t)=\widetilde{\Phi}^*(x)\sigma_Au(x,t)
        \\
        i\frac{\partial f(x,t)}{\partial x}+B(x)f(x,t)=\widetilde{\Phi}^*(x)\sigma_Bu(x,t)
        \\
        v(x,t)=u(x,t)-i\widetilde{\Phi}(x)f(x,t)
        \ea
        \right.
        \ee
($t_0\leq t\leq t_1$, $x_0\leq x\leq x_1$). This case corresponds to the Schr\"{o}dinger equation as in Section \ref{s4}. Then the corresponding colligation conditions are as \rf{113-1-1}---\rf{113-4} (with $\delta=1$).
        \par
Next we determine the form of the matrix functions $\psi(x)$ and $\widetilde{\psi}(x)$. From the equality \rf{110} we have
        \be\la{130}
        \frac{1}{i}\widetilde{\Phi}^*\sigma_A\frac{d\widetilde{\Phi}}{dx}+\widetilde{\Phi}^*\sigma_A\widetilde{\Phi}B^*-
        \widetilde{\Phi}^*\sigma_B\widetilde{\Phi}A^*=\widetilde{\Phi}^*\psi\widetilde{\Phi}.
        \ee
But
        \be\la{131}
        \widetilde{\Phi}^*\sigma_A\widetilde{\Phi}B^*-\widetilde{\Phi}^*\sigma_B\widetilde{\Phi}A^*=
        \frac{A-A^*}{i}B^*-\frac{B-B^*}{i}A^*=\frac{1}{i}(AB^*-BA^*)=\widetilde{\Phi}^*\gamma\widetilde{\Phi},
        \ee
where the matrix $\gamma$ has the form \rf{66}. On the other hand the equality \rf{130} together with \rf{131} implies that
        \be\la{132}
        \frac{1}{i}\widetilde{\Phi}^*\sigma_A\frac{d\widetilde{\Phi}}{dx}+\widetilde{\Phi}^*\gamma\widetilde{\Phi}=\widetilde{\Phi}^*\psi\widetilde{\Phi}, \;\;\;\;\;
        -\frac{1}{i}\frac{d\widetilde{\Phi}^*}{dx}\sigma_A\widetilde{\Phi}+\widetilde{\Phi}^*\gamma\widetilde{\Phi}=\widetilde{\Phi}^*\psi\widetilde{\Phi}.
        \ee
From \rf{132} it follows that
        \be\la{134}
        \frac{1}{i}\widetilde{\Phi}^*\sigma_A\frac{d\widetilde{\Phi}}{dx}=-\frac{1}{i}\frac{d\widetilde{\Phi}^*}{dx}\sigma_A\widetilde{\Phi}
        =\left(\frac{1}{i}\widetilde{\Phi}^*\sigma_A\frac{d\widetilde{\Phi}}{dx}\right)^*.
        \ee
Consequently, $\frac{1}{i}\widetilde{\Phi}^*\sigma_A\frac{d\widetilde{\Phi}}{dx}$ can be presented in the form
        \be\la{135}
        \frac{1}{i}\widetilde{\Phi}^*\sigma_A\frac{d\widetilde{\Phi}}{dx}=\widetilde{\Phi}^*(x)\sigma_1(x)\widetilde{\Phi}(x),
        \ee
where $\sigma_1^*(x)=\sigma_1(x)$. Hence from \rf{132} and \rf{135} we obtain that
        \be\la{136}
        \widetilde{\Phi}^*(x)\sigma_1(x)\widetilde{\Phi}(x)+\widetilde{\Phi}^*(x)\gamma\widetilde{\Phi}(x)
        =\widetilde{\Phi}^*(x)\psi(x)\widetilde{\Phi}(x),
        \ee
i.e. we can consider $\psi(x)$ as a matrix function from the form $\psi(x)=\sigma_1(x)+\gamma$.
Let us present $\psi(x)$ in the form
        \be\la{138}
        \psi(x)=
        \left(
        \ba{cc}
        bL\psi_{11}(x)L     &   ibL\psi_{12}(x)L \\
        -ibL\psi_{12}^*(x)L &   bL
        \ea
        \right)
        =
        \left(
        \ba{cc}
        \widehat{\psi}_{11}   &   \widehat{\psi}_{12} \\
        \widehat{\psi}_{12}^*   &   bL
        \ea
        \right),
        \ee
where $\psi_{11}^*=\psi_{11}$.
Now using the block representation of the matrix function $\widetilde{\Phi}(x)\widetilde{\Phi}(x)^*$
        \be\la{139}
        \widetilde{\Phi}(x)\widetilde{\Phi}^*(x)=
        \left(
        \ba{cc}
        \pi_{11}(x)   &   \pi_{12}(x) \\
        \pi_{12}^*(x)   &   \pi_{22}(x)
        \ea
        \right)
        \ee
($\pi_{11}^*(x)=\pi_{11}(x)$, $\pi_{22}^*(x)=\pi_{22}(x)$), the form \rf{66} of the matrices $\sigma_A$, $\sigma_B$, from \rf{112} we obtain
        \be\la{140}
        \ba{c}
        \widetilde{\psi}(x)=\psi(x)+i
        \left(
        -\left(
        \ba{cc}
        L   &   0  \\
        0   &   0
        \ea
        \right)
        \left(
        \ba{cc}
        \pi_{11}(x)   &   \pi_{12}(x) \\
        \pi_{12}^*(x)   &   \pi_{22}(x)
        \ea
        \right)
        \left(
        \ba{cc}
        0   &   bL  \\
        bL   &   0
        \ea
        \right)
        \right.
        + \\ +
        \left.
        \left(
        \ba{cc}
        0   &   bL  \\
        bL   &   0
        \ea
        \right)
        \left(
        \ba{cc}
        \pi_{11}(x)   &   \pi_{12}(x) \\
        \pi_{12}^*(x)   &   \pi_{22}(x)
        \ea
        \right)
        \left(
        \ba{cc}
        L   &   0  \\
        0   &   0
        \ea
        \right)
        \right)
        = \\ =
        \psi-i
        \left(
        \ba{cc}
        bL(\pi_{12}-\pi_{12}^*)L   &   bL\pi_{11}L  \\
        -bL\pi_{11}L               &   0
        \ea
        \right)
        = \\ =
        \left(
        \ba{cc}
        \widehat{\psi}_{11}-ibL(\pi_{12}-\pi_{12}^*)L   &   \widehat{\psi}_{12}-ibL\pi_{11}L  \\
        \widehat{\psi}_{12}^*+ibL\pi_{11}L              &   \widehat{\psi}_{22}
        \ea
        \right).
        \ea
        \ee
Now the matrix function $\widetilde{\psi}(x)$ can be written in the form
        \be\la{141}
        \widetilde{\psi}(x)=
        \left(
        \ba{cc}
        bL\widetilde{\psi}_{11}(x)L  &  ibL\widetilde{\psi}_{12}(x)L  \\
        -ibL\widetilde{\psi}^*_{12}(x)L  & bL
        \ea
        \right),
        \ee
where $\widetilde{\psi}_{11}^*(x)=\widetilde{\psi}_{11}(x)$, $\widetilde{\psi}_{12}(x)$ are matrices, which depend on $x$.
        \par
Further as in Section \ref{s5} we consider the special case of separated variables (when $\varepsilon=i$, $\delta=1$)
        \be\la{142}
        u(x,t)=e^{-\lambda t}u_{\lambda}(x), \;\;\;\; f(x,t)=e^{-\lambda t}f_{\lambda}(x), \;\;\;\; v(x,t)=e^{-\lambda t}v_{\lambda}(x).
        \ee
Analogously as in Section \ref{s5} if $u(x,t)$, $f(x,t)$, $v(x,t)$ from \rf{142} satisfy the open system \rf{129-1}, then
        \be\la{143}
        \left\{
        \ba{l}
        -\lambda f_{\lambda}(x)+A(x)f_{\lambda}(x)=\widetilde{\Phi}^*(x)\sigma_Au_{\lambda}(x)  \\
        i\frac{df_{\lambda}(x)}{dx}+B(x)f_{\lambda}(x)=\widetilde{\Phi}^*(x)\sigma_Bu_{\lambda}(x)  \\
        v_{\lambda}(x)=u_{\lambda}(x)-i\widetilde{\Phi}f_{\lambda}(x)
        \ea
        \right.
        \ee
which implies that $f_{\lambda}(x)=(A-\lambda I)^{-1}\widetilde{\Phi}^*(x)\sigma_Au_{\lambda}(x)$ and
        \be\la{145}
        v_{\lambda}(x)=u_{\lambda}(x)-i\widetilde{\Phi}(A-\lambda I)^{-1}\widetilde{\Phi}^*(x)\sigma_Au_{\lambda}(x)=W_A(\lambda)u_{\lambda}(x)
        \ee
where $W_A(\lambda)$ is the characteristic operator function of the operator $A$ and $\lambda$ does not belong to the spectrum of the operator $A$. 
        \par
The compatibility conditions \rf{113-6} and \rf{113-7} (when $\varepsilon=i$, $\delta=1$) now take the form
        \be\la{146}
        \lambda \sigma_B u_{\lambda}(x)+i\sigma_A\frac{du_{\lambda}(x)}{dx}+\psi(x)u_{\lambda}(x)=0,
        \ee
        \be\la{147}
        \lambda \sigma_B v_{\lambda}(x)+i\sigma_A\frac{dv_{\lambda}(x)}{dx}+\widetilde{\psi}(x)v_{\lambda}(x)=0.
        \ee
Let us denote $u_{\lambda}(x)=(u_1(x),u_2(x))$, $v_{\lambda}(x)=(v_1(x),v_2(x))$.
        Then the equality \rf{136} with the help of \rf{66} and \rf{138} takes the form
        \be\la{149}
        \ba{c}
        \lambda(u_1(x),u_2(x))
        \left(
        \ba{cc}
        L   &   0 \\
        0   &   0
        \ea
        \right)
        +i\left(\frac{du_1}{dx},\frac{du_2}{dx}\right)
        \left(
        \ba{cc}
        0   &   bL \\
        bL   &   0
        \ea
        \right)
        + \\ +
        (u_1(x),u_2(x))
        \left(
        \ba{cc}
        bL\psi_{11}L  &  ibL\psi_{12} \\L
        -ibL\psi_{12}^*L  &  bL
        \ea
        \right)=0,
        \ea
        \ee
i.e.
        \be\la{150}
        \lambda u_1+ib\frac{du_2}{dx}+bu_1L\psi_{11}-ibu_2L\psi_{12}^*=0,
        \ee
        \be\la{151}
        \frac{du_1}{dx}+u_1L\psi_{12}-iu_2=0.
        \ee
        \par
Now if the vector functions $u_1$, $u_2$ satisfy the equations \rf{150}, \rf{151}, the straightforward calculations show that $u_1(x)$ satisfies the Sturm-Liouville equation
        \be\la{152}
        -\frac{d^2u_1}{dx^2}+u_1((L\psi_{12})^2-L\psi_{11}-L\frac{d\psi_{12}}{dx})=\frac{\lambda}{b}u_1
        \ee
and if the next additional conditions are satisfied
        \be\la{153}
        \psi_{12}=\psi_{12}^*,  \;\;\;\;\;\;
        -\psi_{12}L\psi_{11}+\psi_{11}L\psi_{12}^*+\frac{d\psi_{11}}{dx}=0
        \ee
the vector function $u_2(x)$ satisfies the next Sturm-Liouville equation
        \be\la{153-1}
         -\frac{d^2u_2}{dx^2}+u_2((L\psi_{12})^2-L\psi_{11}+L\frac{d\psi_{12}}{dx})=\frac{\lambda}{b}u_2
        \ee
        \par
Analogously the compatibility condition \rf{113-7} about the output $v(x)=(v_1(x),v_2(x))$ when $\varepsilon=i$, $\delta=1$ (using \rf{66} and \rf{141}) takes the form
        \be\la{154-1}
\ba{c}
        \lambda(v_1(x),v_2(x))
        \left(
        \ba{cc}
        L   &   0 \\
        0   &   0
        \ea
        \right)
        +i\left(\frac{dv_1}{dx},\frac{dv_2}{dx}\right)
        \left(
        \ba{cc}
        0   &   bL \\
        bL   &   0
        \ea
        \right)
        + \\ +
        (v_1(x),v_2(x))
        \left(
        \ba{cc}
        bL\widetilde{\psi}_{11}L  &  ibL\widetilde{\psi}_{12} \\L
        -ibL\widetilde{\psi}_{12}^*L  &  bL
        \ea
        \right)=0,
        \ea
        \ee
i.e.
        \be\la{155}
        \left\{
        \ba{l}
        \frac{dv_1}{dx}+v_1L\widetilde{\psi}_{12}-iv_2=0 \\
        \lambda v_1+ib\frac{dv_2}{dx}+bv_1L\widetilde{\psi}_{11}-ibv_2L\widetilde{\psi}_{12}^*=0.
        \ea
        \right.
        \ee
If $v_1$ and $v_2$ satisfy the system \rf{155}, starightforward calculations show that they satisfy the following Sturm-Liouville equations correspondingly
        \be\la{156}
        -\frac{d^2v_1}{dx^2}+v_1((L\widetilde{\psi}_{12})^2-L\widetilde{\psi}_{11}-L\frac{d\widetilde{\psi}_{12}}{dx})=\frac{\lambda}{b}v_1,
        \ee
        \be\la{157}
        -\frac{d^2v_2}{dx^2}+v_2((L\widetilde{\psi}_{12})^2-L\widetilde{\psi}_{11}+L\frac{d\widetilde{\psi}_{12}}{dx})=\frac{\lambda}{b}v_2
        \ee
when we suppose the additional conditions
        \be\la{158}
        \widetilde{\psi}_{12}=\widetilde{\psi}_{12}^*, \;\;\;\;
        -\widetilde{\psi}_{12}L\widetilde{\psi}_{11}+\widetilde{\psi}_{11}L\widetilde{\psi}_{12}^*+\frac{d\widetilde{\psi}_{11}}{dx}=0.
        \ee
Consequently we prove the next theorem:
        \begin{thm}\la{t16}
        The input $u(x,t)=e^{-\lambda t}u_{\lambda}(x)=e^{-\lambda t}(u_1(x),u_2(x))$ and the output
        $v(x,t)=e^{-\lambda t}v_{\lambda}(x)=e^{-\lambda t}(v_1(x),v_2(x))$ of the open system \rf{129-1} satisfy the compatibility conditions
        \be\la{159}
        \ba{l}
        \frac{du_{\lambda}(x)}{dx}-i\sigma_A^{-1}(\lambda\sigma_B+\psi(x))u_\lambda(x)=0,  \\
        \frac{dv_{\lambda}(x)}{dx}-i\sigma_A^{-1}(\lambda\sigma_B+\widetilde{\psi}(x))v_\lambda(x)=0
        \ea
        \ee
and the characteristic operator function
        $W_A(\lambda)=I-i\widetilde{\Phi}(A-\lambda I)^{-1}\widetilde{\Phi}^*\sigma_A$
maps the input $u_{\lambda}(x)=(u_1(x),u_2(x))$, satisfying the Sturm-Liouville equations
        $$
        -\frac{d^2u_1}{dx^2}+u_1((L\psi_{12})^2-L\psi_{11}-L\frac{d\psi_{12}}{dx})=\frac{\lambda}{b}u_1,
        $$
        $$
        -\frac{d^2u_2}{dx^2}+u_2((L\psi_{12})^2-L\psi_{11}+L\frac{d\psi_{12}}{dx})=\frac{\lambda}{b}u_2
        $$
to the output $v_{\lambda}(x)=(v_1(x),v_2(x))=W_A(\lambda)u_{\lambda}$,
 which are solutions of the following Sturm-Liouville equations
 Schr\"{o}dinger equations
        $$
        -\frac{d^2v_1}{dx^2}+v_1((L\widetilde{\psi}_{12})^2-L\widetilde{\psi}_{11}-L\frac{d\widetilde{\psi}_{12}}{dx})=\frac{\lambda}{b}v_1,
        $$
        $$
        -\frac{d^2v_2}{dx^2}+v_2((L\widetilde{\psi}_{12})^2-L\widetilde{\psi}_{11}+L\frac{d\widetilde{\psi}_{12}}{dx})=\frac{\lambda}{b}v_2
        $$
in the case when the operator functions $\widetilde{\Phi}(x)$, $\psi(x)$, $\widetilde{\psi}(x)$ satisfy the conditions \rf{153}, \rf{158} ($x\in[x_0,x_1]$).
        \end{thm}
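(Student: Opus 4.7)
\textbf{Proof plan for Theorem \ref{t16}.}

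The plan is to reduce the matrix wave equations of Theorem \ref{t14} to ODE's via the separation-of-variables ansatz \rf{142} and then to uncouple the resulting $2\times 2$ first-order systems into scalar second-order equations. First, plugging $u(x,t)=e^{-\lambda t}u_\lambda(x)$, $f(x,t)=e^{-\lambda t}f_\lambda(x)$, $v(x,t)=e^{-\lambda t}v_\lambda(x)$ into \rf{129-1} gives the reduced system \rf{143}, from whose first equation one gets $f_\lambda(x)=(A(x)-\lambda I)^{-1}\widetilde{\Phi}^*(x)\sigma_A u_\lambda(x)$ (for $\lambda$ outside the spectrum of $A(x)$); substituting into the third equation yields $v_\lambda(x)=W_A(\lambda)u_\lambda(x)$. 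Substituting the same ansatz into the compatibility conditions \rf{113-6}, \rf{113-7} established in Theorem \ref{t14} (with $\varepsilon=i$, $\delta=1$) directly produces the first-order ODE's \rf{146}, \rf{147}. Left-multiplication by the invertible $\sigma_A^{-1}$ then rearranges these into the form \rf{159}.

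Next, I would expand each ODE into two scalar equations by using the explicit block structure \rf{66} of $\sigma_A, \sigma_B$ together with the block form \rf{138} of $\psi(x)$. Writing $u_\lambda(x)=(u_1(x),u_2(x))$ and carrying out the block multiplications in \rf{146}, one obtains the $2\times 2$ system \rf{150}--\rf{151}. To derive \rf{152} for $u_1$, I would solve \rf{151} for $u_2 = -i(du_1/dx + u_1 L\psi_{12})$ and substitute into \rf{150}, reducing the coefficient of $u_1$ to $(L\psi_{12})^2 - L\psi_{11} - L\,d\psi_{12}/dx$ after collecting terms. To derive \rf{153-1} for $u_2$, I would differentiate \rf{150} once in $x$, eliminate $du_1/dx$ using \rf{151} and $du_2/dx$ using \rf{150} again, and invoke the self-adjointness $\psi_{12}=\psi_{12}^*$ and the derivative identity in \rf{153} to produce the zero-order coefficient $(L\psi_{12})^2 - L\psi_{11} + L\,d\psi_{12}/dx$. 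The computation for the output is formally identical, with $\widetilde{\psi}$ in place of $\psi$ (using the block form \rf{141}), producing the system \rf{155}; the symmetry and derivative conditions \rf{158} then yield \rf{156}, \rf{157}.

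The only nontrivial step is the elimination producing the Sturm-Liouville equation for $u_2$ (and, symmetrically, $v_2$). Here one must track carefully how the terms $L(d\psi_{12}/dx)$ arise with the opposite sign from those in the equation for $u_1$, and verify that the cross-terms involving $\psi_{11}L\psi_{12}^* - \psi_{12}L\psi_{11}$ cancel against $d\psi_{11}/dx$ by virtue of the second identity in \rf{153}; without this hypothesis the equation for $u_2$ would contain spurious first-order terms. Once this algebraic check is complete the theorem follows by collecting the pieces: \rf{159} from the matrix wave equations, the Sturm-Liouville equations for $u_1,u_2$ from \rf{150}--\rf{151} under \rf{153}, those for $v_1,v_2$ from \rf{155} under \rf{158}, and the map $v_\lambda = W_A(\lambda)u_\lambda$ from \rf{145}.
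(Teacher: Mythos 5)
Your proposal follows essentially the same route as the paper: the paper's proof is precisely the derivation preceding the theorem statement, namely substituting the separated-variables ansatz \rf{142} into the open system \rf{129-1} to get \rf{143} and $v_\lambda=W_A(\lambda)u_\lambda$, specializing the compatibility conditions of Theorem \ref{t14} to \rf{146}--\rf{147}, expanding via the block forms \rf{66}, \rf{138}, \rf{141} into the systems \rf{150}--\rf{151} and \rf{155}, and eliminating to obtain the Sturm--Liouville equations under the conditions \rf{153}, \rf{158}. Your observation that the hypotheses \rf{153} (and \rf{158}) are what kill the spurious first-order and cross terms in the second-component equations matches the role the paper assigns to them.
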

%
%
%
%

        \section{A special case of the input and the output of the open system, corresponding to the Korteweg-de Vries equation}\la{s8}
        \par
In this section we consider an open system, connected with obtaining of solutions of the nonlinear KdV equation in the special case of the separated variables in the input, the internal state, and the output of the corresponding open system. We derive what kind of differential equations are satisfied by the components of the input and the output, at first in the case when the operators $A$ and $B$ do not depend on the variables $x$ and $t$.
        \par
Let the operators $A=bB^3$, $B$ ($b\in\mathbb{R}$), the collective motions \rf{58}, the colligation $X$ from \rf{81-1} be like in Section \ref{s4} stated.
        \par
Let us consider the open system, corresponding to the colligation $X$ in the case when $\varepsilon=\delta=1$, i.e. the system
        \be\la{160}
        \left\{
        \ba{c}
        i\frac{\partial f}{\partial t}+Af=\widetilde{\Phi}^*\sigma_Au \\
        i\frac{\partial f}{\partial x}+Bf=\widetilde{\Phi}^*\sigma_Bu \\
        v=u-i\widetilde{\Phi}f
        \ea
        \right.
        \ee
where $u=u(x,t)$, $v=v(x,t)$, $f=f(x,t)$ are the input, the output and the state of the system \rf{160}.
        \par
From Theorem 3.3 in \cite{JMAA} or Theorem 4 in \cite{LIVSIC-AVISHAI} (in the considered case $\varepsilon=\delta=1$ we obtain the case, considered by M.S. Liv\v sic and Y. Avishai) it follows that the collective motions are compatible if and only if the input and the output satisfy the following partial differential equations (or matrix wave equations)
(or compatibility conditions)
        \be\la{161}
        \sigma_B\frac{\partial u}{\partial t}-\sigma_A\frac{\partial u}{\partial x}+i\gamma u=0,
        \ee
        \be\la{162}
        \sigma_B\frac{\partial v}{\partial t}-\sigma_A\frac{\partial v}{\partial x}+i\widetilde{\gamma} v=0.
        \ee
        \par
As in Section \ref{s5} and Section \ref{s6} we consider the input, the output and the state in the special case of separated variables  when
        \be\la{163}
        u(x,t)=e^{i\lambda t}u_{\lambda}(x), \;\;\;\; f(x,t)=e^{i\lambda t}f_{\lambda}(x), \;\;\;\; v(x,t)=e^{i\lambda t}v_{\lambda}(x).
        \ee
        \par
Now the open system \rf{160} takes the form
        \be\la{164}
        \left\{
        \ba{l}
        -\lambda f_{\lambda}(x)+Af_{\lambda}(x)=\widetilde{\Phi}^*\sigma_A u_{\lambda}(x)  \\
        i\frac{df_{\lambda}(x)}{dx}+Bf_{\lambda}(x)=\widetilde{\Phi}^*\sigma_B u_{\lambda}(x)  \\
        v_{\lambda}(x)=u_{\lambda}(x)-i\widetilde{\Phi}f_{\lambda}(x).
        \ea
        \right.
        \ee
Consequently $f_{\lambda}(x)=(A-\lambda I)^{-1}\widetilde{\Phi}^*\sigma_A u_{\lambda}(x)$ and
        \be\la{166}
        v_{\lambda}(x)=u_{\lambda}(x)-i\widetilde{\Phi}(A-\lambda I)^{-1}\widetilde{\Phi}^*\sigma_A u_{\lambda}(x)=W_A(\lambda)u_{\lambda}(x),
        \ee
where
        \be\la{166-1}
        W_A(\lambda)=I-i\widetilde{\Phi}(A-\lambda I)^{-1}\widetilde{\Phi}^*\sigma_A
        \ee
is the characteristic operator function of the operator $A$ and $\lambda$ does not beling to the spectrum of the operator $A$.
        \par
Now the compatibility conditions \rf{161} and \rf{162} imply that $u_{\lambda}(x)$ and $v_{\lambda}(x)$ satisfy the following differential equations
        \be\la{167}
        i\lambda\sigma_Bu_{\lambda}(x)-\sigma_A\frac{du_{\lambda}(x)}{dx}+i\gamma u_{\lambda}(x)=0,
        \ee
        \be\la{168}
        i\lambda\sigma_Bv_{\lambda}(x)-\sigma_A\frac{dv_{\lambda}(x)}{dx}+i\widetilde{\gamma} v_{\lambda}(x)=0.
        \ee
The form \rf{33-7} of the operators $\sigma_A$, $\sigma_B$, $\gamma$, the colligation condition for regular colligation
        $$
        \widetilde{\gamma}-\gamma=i(\sigma_A\widetilde{\Phi}\widetilde{\Phi}^*\sigma_B-\sigma_B\widetilde{\Phi}\widetilde{\Phi}^*\sigma_A)
        $$
and straightforward calculations show that
        \be\la{169}
        \widetilde{\gamma}=
        \left(
        \ba{ccc}
        -ibL(\pi_{13}-\pi_{13}^*)L   &   ibL\pi_{12}L   &   ibL\pi_{11}L  \\
        -ibL\pi_{12}^*L   &   0   &   bL   \\
        -ibL\pi_{11}L   &   bL   &   0
        \ea
        \right)
        \ee
where $\widetilde{\Phi}$ is defined by \rf{33-7} and the next selfadjoint matrix $\widetilde{\Phi}\widetilde{\Phi}^*$ has the block representation
        \be\la{170}
        \widetilde{\Phi}\widetilde{\Phi}^*=
        \left(
        \ba{ccc}
        \pi_{11}  &  \pi_{12}  &  \pi_{13}  \\
        \pi_{12}^* & \pi_{22}  &  \pi_{23}  \\
        \pi_{13}^* & \pi_{23}^* & \pi_{33}
        \ea
        \right)
        \ee
(with $\pi_{11}^*=\pi_{11}$, $\pi_{22}^*=\pi_{22}$, $\pi_{33}^*=\pi_{33}$).
        \begin{thm}\la{t17}
        The input
        $u(x,t)=e^{i\lambda t}u_{\lambda}(x)=e^{i\lambda t}(u_1(x),u_2(x),u_3(x))$ and the output
        $v(x,t)=e^{i\lambda t}v_{\lambda}(x)=e^{i\lambda t}(v_1(x),v_2(x),v_3(x))$
of the open system \rf{160} satisfy the compatibility conditions
        \be\la{172}
        \sigma_A\frac{du_{\lambda}(x)}{dx}-i(\lambda\sigma_B+\gamma)u_{\lambda}(x)=0,
        \ee
        \be\la{173}
        \sigma_A\frac{dv_{\lambda}(x)}{dx}-i(\lambda\sigma_B+\widetilde{\gamma})v_{\lambda}(x)=0
        \ee
and the characteristic operator function $W_A(\lambda)$ of the operator $A=bB^3$ from \rf{166-1} maps the input
        $u(x,t)=e^{i\lambda t}u_{\lambda}(x)=e^{i\lambda t}(u_1(x),u_2(x),u_3(x))$, satisfying the equations
        $$
        -\frac{d^3u_k}{dx^3}=\frac{i\lambda}{b}u_k, \;\;\;\;\;\; k=1,2,3,
        $$
%
%
to the output
        $v(x,t)=e^{i\lambda t}v_{\lambda}(x)=e^{i\lambda t}(v_1(x),v_2(x),v_3(x))$
which are solutions of the differential equations
        $$
        \frac{d^3v_k}{dx^3}+\frac{dv_k}{dx}p_k+v_kq_k=-\frac{i\lambda}{b}v_k, \;\;\;\;\;\; k=1,2,3,
        $$
%
%
where the matrices $p_k$, $q_k$ ($k=1,2,3$) have the form
        $$
        p_1=iL\pi_{12}-iL\pi_{12}^*-(L\pi_{11})^2, \;\;\;\;\;
        q_1=L(\pi_{13}-\pi_{13}^*)-iL\pi_{11}L\pi_{12}^*-iL\pi_{12}L\pi_{11},
        $$
        $$
        p_2=iL(\pi_{12}-iL\pi_{12}^*)-(L\pi_{11})^2, \;\;\;\;\;
        q_2=L(\pi_{13}-\pi_{13}^*)-iL\pi_{11}L\pi_{12}-iL\pi_{12}^*L\pi_{11},
        $$
        $$
        p_3=(L\pi_{11})^2+iL\pi_{12}, \;\;\;\;\;
        q_3=iL\pi_{12}^*-iL\pi_{11}L\pi_{12},
        $$
in the case when the matrices $\pi_{11}$, $\pi_{12}$, $\pi_{13}$ satisfy the following additional conditions
        $$
        (L\pi_{11})^2L\pi_{12}-L\pi_{12}(L\pi_{11})^2+iL\pi_{11}L(\pi_{13}-\pi_{13}^*)-iL(\pi_{13}-\pi_{13}^*)L\pi_{11}=0,
        $$
        $$
        \ba{c}
        (L\pi_{11})^2L(\pi_{13}-\pi_{13}^*)-L\pi_{11}L(\pi_{13}-\pi_{13}^*)L\pi_{11}+
        iL(\pi_{13}-\pi_{13}^*)L\pi_{12}-iL\pi_{12}L(\pi_{13}-\pi_{13}^*)
        + \\ +
        L\pi_{11}L\pi_{12}L\pi_{12}^*-L\pi_{12}L\pi_{12}^*L\pi_{11}=0,
        \ea
        $$
        $$
        iL(\pi_{13}-\pi_{13}^*)L\pi_{11}-iL\pi_{11}L(\pi_{13}-\pi_{13}^*)-iL\pi_{12}L\pi_{12}^*-iL\pi_{12}^*L\pi_{12}=0.
        $$
        \end{thm}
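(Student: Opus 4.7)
The plan is to follow the same pattern of argument used in Theorems \ref{t13} and \ref{t16}, but now adapted to the three-block structure dictated by the cubic relation $A=bB^{3}$. First I would insert the separated-variable ansatz $u(x,t)=e^{i\lambda t}u_{\lambda}(x)$, $f(x,t)=e^{i\lambda t}f_{\lambda}(x)$, $v(x,t)=e^{i\lambda t}v_{\lambda}(x)$ into the open system \rf{160} to obtain the reduced system \rf{164}. Solving the algebraic equation $(A-\lambda I)f_{\lambda}=\widetilde{\Phi}^{*}\sigma_{A}u_{\lambda}$ (valid for $\lambda$ off the spectrum of $A$) and substituting back into $v_{\lambda}=u_{\lambda}-i\widetilde{\Phi}f_{\lambda}$ yields at once the transfer relation $v_{\lambda}(x)=W_{A}(\lambda)u_{\lambda}(x)$ with $W_{A}$ given by \rf{166-1}. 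Substituting the ansatz into \rf{161}, \rf{162} and cancelling the common factor $e^{i\lambda t}$ produces the compatibility identities \rf{172}, \rf{173}.

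Next I would unpack the matrix identity \rf{172} componentwise using the explicit block structure \rf{33-7}. Writing $u_{\lambda}=(u_{1},u_{2},u_{3})$ as a row and multiplying on the right by $\sigma_{A}$, $\sigma_{B}$ and $\gamma$, the antidiagonal form of $\sigma_{A}$ together with the invertibility of $L$ reduces \rf{172} to the coupled first-order chain
\begin{equation*}
u_{1}^{\prime}=iu_{2},\qquad u_{2}^{\prime}=iu_{3},\qquad u_{3}^{\prime}=\tfrac{i\lambda}{b}u_{1}.
\end{equation*}
Iterating this chain gives $u_{1}^{\prime\prime\prime}=-\tfrac{i\lambda}{b}u_{1}$, and by differentiating the chain once or twice the same third-order equation follows for $u_{2}$ and $u_{3}$; this establishes the stated differential equations for the components of the input.

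For the output I would carry out the analogous expansion with $\widetilde{\gamma}$ from \rf{169}, exploiting the symmetry data $\pi_{ij}=(\widetilde{\Phi}\widetilde{\Phi}^{*})_{ij}$ displayed in \rf{170}. The analogue of the chain above becomes inhomogeneous:
\begin{equation*}
v_{1}^{\prime}=iv_{2}-v_{1}L\pi_{11},\qquad v_{2}^{\prime}=iv_{3}-v_{1}L\pi_{12},\qquad v_{3}^{\prime}=\tfrac{i\lambda}{b}v_{1}+v_{1}L(\pi_{13}-\pi_{13}^{*})+v_{2}L\pi_{12}^{*}+v_{3}L\pi_{11}.
\end{equation*}
Solving the first two relations for $v_{2}$ and $v_{3}$ as linear combinations of $v_{1},v_{1}^{\prime},v_{1}^{\prime\prime}$ and substituting into the third relation eliminates $v_{2},v_{3}$ and produces the third-order equation for $v_{1}$ with coefficients $p_{1},q_{1}$ exactly as stated (the $v_{1}^{\prime\prime}L\pi_{11}$ terms cancel, which is what enables the reduction). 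No extra hypothesis on $\pi_{ij}$ is needed for this first component.

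The genuinely non-trivial step, and in my view the main obstacle, is to derive the clean third-order equations for $v_{2}$ and $v_{3}$. Carrying out the symmetric elimination (solving the chain for $v_{1},v_{3}$ in terms of $v_{2}$ and its derivatives, respectively $v_{1},v_{2}$ in terms of $v_{3}$) produces, besides the expected $v_{k}^{\prime\prime\prime}$, lower-order terms that couple the different components back in through cross-products of $L\pi_{11}$, $L\pi_{12}$, $L\pi_{12}^{*}$ and $L(\pi_{13}-\pi_{13}^{*})$. Closing these into a single third-order scalar equation for $v_{2}$ (respectively $v_{3}$) with the announced coefficients $p_{2},q_{2}$ (respectively $p_{3},q_{3}$) forces precisely the three algebraic identities listed at the end of the theorem. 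I would therefore treat those identities not as extraneous assumptions but as the decoupling conditions extracted by imposing that the residual mixed terms vanish after substitution; verifying that they indeed produce the stated $p_{2},q_{2},p_{3},q_{3}$ is essentially bookkeeping of commutators of $L\pi_{ij}$, but must be organized carefully to keep track of adjoints and signs.
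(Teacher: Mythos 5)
Your proposal is correct and follows essentially the same route as the paper: the paper likewise substitutes the separated-variable ansatz to get $v_\lambda=W_A(\lambda)u_\lambda$ and the reduced compatibility conditions, unpacks them via the block forms of $\sigma_A$, $\sigma_B$, $\gamma$, $\widetilde{\gamma}$ into exactly the first-order chains you write down (its equations (8.23) and (8.24)), and then invokes ``direct calculations'' for the elimination that you describe in more detail. Your observation that the first output component needs no extra hypotheses while the decoupling of $v_2$ and $v_3$ is what forces the three algebraic identities is a faithful (and slightly more explicit) account of what those direct calculations amount to.
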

        \begin{proof}
        In the considered case of separated variables \rf{163} the compatibility conditions \rf{167} and \rf{168} (using the form \rf{33-7}, \rf{169} of the matrices $\sigma_A$, $\sigma_B$, $\gamma$, $\widetilde{\gamma}$) take the form
        \be\la{180}
        \ba{c}
        i\lambda(u_1,u_2,u_3)
        \left(
        \ba{ccc}
        L   &   0 & 0 \\
        0   &   0 & 0 \\
        0 & 0 & 0
        \ea
        \right)
        -\left(\frac{du_1}{dx},\frac{du_2}{dx},\frac{du_3}{dx}\right)
        \left(
        \ba{ccc}
        0   & 0 &  bL  \\
        0  & bL & 0  \\
        bL & 0 & 0
        \ea
        \right)
        + \\ +
        i(u_1,u_2,u_3)
        \left(
        \ba{ccc}
        0   &  0  &    0   \\
        0  &  0  &    bL  \\
        0  & bL  &  0
        \ea
        \right)=0,
        \ea
        \ee
        \be\la{181}
        \ba{c}
        i\lambda(v_1,v_2,v_3)
        \left(
        \ba{ccc}
        L   &   0 & 0 \\
        0   &   0 & 0 \\
        0 & 0 & 0
        \ea
        \right)
        -\left(\frac{dv_1}{dx},\frac{dv_2}{dx},\frac{dv_3}{dx}\right)
        \left(
        \ba{ccc}
        0   & 0 &  bL  \\
        0  & bL & 0  \\
        bL & 0 & 0
        \ea
        \right)
        + \\ +
        i(v_1,v_2,v_3)
        \left(
        \ba{ccc}
        -ibL(\pi_{13}-\pi_{13}^*)L   &   ibL\pi_{12}L   &   ibL\pi_{11}L  \\
        -ibL\pi_{12}^*L   &   0   &   bL   \\
        -ibL\pi_{11}L   &   bL   &   0
        \ea
        \right)=0.
        \ea
        \ee
        \par
Straightforward calculations show that the equations \rf{180} and \rf{181} imply that $u_1$, $u_2$, $u_3$ and $v_1$, $v_2$, $v_3$ satisfy the next differential equations
        \be\la{182}
        \left\{
        \ba{l}
        -\frac{du_1}{dx}+iu_2=0 \\
        -\frac{du_2}{dx}+iu_3=0 \\
        -b\frac{du_3}{dx}+i\lambda u_1=0
        \ea
        \right.
        \ee
and
        \be\la{183}
        \left\{
        \ba{l}
        \frac{dv_1}{dx}+v_1L\pi_{11}-iv_2=0 \\
        \frac{dv_2}{dx}+v_1L\pi_{12}-iv_3=0 \\
        -b\frac{dv_3}{dx}+bv_1L(\pi_{13}-\pi_{13}^*)+bv_2L\pi_{12}^*+bv_3L\pi_{11}+i\lambda v_1=0
        \ea
        \right.
        \ee
Now direct calculations proves the theorem.

        \end{proof}
%
%
%
%
        \section{The case when the operators $\mathbf{A}$ and $\mathbf{B}$ depend on the variable $\mathbf{x}$ and the Korteweg-de Vries equation}\la{s9}
        \par
        In this section, we expand the results obtained in Section \ref{s8} in the case when the operators $A$ and $B$ depend on the spatial variable $x$ analogously as in Section \ref{s7}.
        \par
Let us consider now regular colligations (or vessels) which depend on the spatial variable, i.e. the operators $A$, $B$, $\widetilde{\Phi}$ depend on the spatial variable $x$. This implies that if the operator $B$ is the triangular model of couplings of dissipative and antidissipative operators with real spectra, i.e $B$ is the triangular model \rf{21-1} with $\Delta=[-l,l]$, $A=bB^3$ then the matrix function $\Pi(w)$ depends also on $x$, i.e. $\Pi=\Pi(w,x)$.
        \par
        We consider the case of $\delta=\varepsilon=1$.
Analogously as in Section \ref{s7} we embed the operators $A=bB^3$ and $B$ from \rf{21-1} ($b\in\mathbb{R}$) in the strict colligation
        $$
        X=(A(x)=bB^3(x),B(x);H=\mathbf{L}^2(\Delta;\mathbb{C}^p),\widetilde{\Phi},E=\mathbb{C}^{3m};
        \sigma_A,\sigma_B,\psi(x),\widetilde{\psi}(x)),
        $$
where $\sigma_A$, $\sigma_B$, $\widetilde{\Phi}$ have the form \rf{33-7}. The colligation conditions are as in Section \ref{s7}: \rf{113-1-1}, \rf{113-2}, \rf{113-3} and the matrices $\psi(x)$ and $\widetilde{\psi}(x)$ satisfy the condition \rf{113-4} (when $\delta=\varepsilon=1$).
        \par
Then the corresponding open system has the form
        \be\la{184}
        \left\{
        \ba{l}
        i\frac{\partial f(x,t)}{\partial t}+A(x)f(x,t)=\widetilde{\Phi}^*(x)\sigma_Au(x,t)
        \\
        i\frac{\partial f(x,t)}{\partial x}+B(x)f(x,t)=\widetilde{\Phi}^*(x)\sigma_Bu(x,t)
        \\
        v(x,t)=u(x,t)-i\widetilde{\Phi}(x)f(x,t)
        \ea
        \right.
        \ee
($t_0\leq t\leq t_1$, $x_0\leq x\leq x_1$). (In this case the operators $\psi(x)$ and $\widetilde{\psi}(x)$ are selfadjoint.)
        \par
From Theorem \ref{t14}
in the case when  $\delta=\varepsilon=1$ it follows that the strong compatibility conditions at the input and at the output (i.e. the matrix wave equations) are
        \be\la{185}
        \sigma_B\frac{\partial u(x,t)}{\partial t}-\sigma_A\frac{\partial u(x,t)}{\partial x}+i\psi(x)u(x,t)=0,
        \ee
        \be\la{186}
        \sigma_B\frac{\partial v}{\partial t}(x,t)-\sigma_A\frac{\partial v(x,t)}{\partial x}+i\widetilde{\psi}(x)v(x,t)=0.
        \ee
Analogously as in the case of the Schr\"{o}dinger equation in Section \ref{s6} we can present the matrix functions $\psi(x)$ and $\widetilde{\psi}(x)$ in the form
        \be\la{187}
        \psi(x)=
        \left(
        \ba{ccc}
        bL\psi_{11}(x)L     &   ibL\psi_{12}(x)L  &  ibL\psi_{13}(x)L \\
        -ibL\psi_{12}^*(x)L &   0   &   bL  \\
        -ibL\psi_{13}(x)L       &   bL  &   0
        \ea
        \right),
        \ee
where $\psi_{11}^*(x)=\psi_{11}(x)$, $\psi_{13}^*(x)=\psi_{13}(x)$ and
        \be\la{188}
        \widetilde{\psi}(x)=
        \left(
        \ba{ccc}
        bL\widetilde{\psi}_{11}(x)L     &   ibL\widetilde{\psi}_{12}(x)L  &  ibL\widetilde{\psi}_{13}(x)L \\
        -ibL\widetilde{\psi}_{12}^*(x)L &   0   &   bL  \\
        -ibL\widetilde{\psi}_{13}(x)L       &   bL  &   0
        \ea
        \right),
        \ee
where $\widetilde{\psi}_{11}^*(x)=\widetilde{\psi}_{11}(x)$, $\widetilde{\psi}_{13}^*(x)=\widetilde{\psi}_{13}(x)$ and the  matrices $\psi_{11}(x)$, $\psi_{12}(x)$, $\psi_{13}(x)$, $\widetilde{\psi}_{11}(x)$, $\widetilde{\psi}_{12}(x)$, $\widetilde{\psi}_{13}(x)$ are $m\times m$ matrices.
        \par
Next we consider the special case of separated variables for the input, the inner state, and the output
        \be\la{189}
        u(x,t)=e^{i\lambda t}u_{\lambda}(x), \;\;\;\; f(x,t)=e^{i\lambda t}f_{\lambda}(x), \;\;\;\; v(x,t)=e^{i\lambda t}v_{\lambda}(x).
        \ee
Then the open system \rf{184} takes the form
        \be\la{190}
        \left\{
        \ba{l}
        -\lambda f_{\lambda}(x)+A(x)f_{\lambda}(x)=\widetilde{\Phi}^*(x)\sigma_Au_{\lambda}(x)  \\
        i\frac{df_{\lambda}(x)}{dx}+B(x)f_{\lambda}(x)=\widetilde{\Phi}^*(x)\sigma_Bu_{\lambda}(x)  \\
        v_{\lambda}(x)=u_{\lambda}(x)-i\widetilde{\Phi}f_{\lambda}(x)
        \ea
        \right.
        \ee
which implies that $f_{\lambda}(x)=(A-\lambda I)^{-1}\widetilde{\Phi}^*(x)\sigma_Au_{\lambda}(x)$ and
%
        \be\la{192}
        v_{\lambda}(x)=u_{\lambda}(x)-i\widetilde{\Phi}(A-\lambda I)^{-1}\widetilde{\Phi}^*(x)\sigma_Au_{\lambda}(x)=W_A(\lambda)u_{\lambda}(x)
        \ee
where $W_A(\lambda)$ is the characteristic operator function of the operator $A$ and $\lambda$ does not belong to the spectrum of the operator $A$. 
        \par
On the other hand the matrix wave equations \rf{185} and \rf{186} take the form
        \be\la{193}
        i\lambda \sigma_B u_{\lambda}(x)-\sigma_A\frac{du_{\lambda}(x)}{dx}+i\psi(x)u_{\lambda}(x)=0,
        \ee
        \be\la{194}
        i\lambda \sigma_B v_{\lambda}(x)-\sigma_A\frac{dv_{\lambda}(x)}{dx}+i\widetilde{\psi}(x)v_{\lambda}(x)=0.
        \ee
Now the form \rf{33-7} of the matrices $\sigma_A$, $\sigma_B$, $\widetilde{\Phi}(x)$, the form \rf{187} and \rf{188} of the matrix function $\psi(x)$ and $\widetilde{\psi}(x)$, the presentation
        \be\la{195}
        u_{\lambda}(x)=(u_1(x),u_2(x),u_3(x)), \;\;\;\; v_{\lambda}(x)=(v_1(x),v_2(x),v_3(x)),
        \ee
together with the equalities \rf{193} and \rf{194} imply that $u_{\lambda}(x)$ and $v_{\lambda}(x)$ satisfy the following partial differential equations
        \be\la{196}
        \ba{c}
        i\lambda(u_1,u_2,u_3)
        \left(
        \ba{ccc}
        L   &   0 & 0 \\
        0   &   0 & 0 \\
        0 & 0 & 0
        \ea
        \right)
        -\left(\frac{du_1}{dx},\frac{du_2}{dx},\frac{du_3}{dx}\right)
        \left(
        \ba{ccc}
        0   & 0 &  bL  \\
        0  & bL & 0  \\
        bL & 0 & 0
        \ea
        \right)
        + \\ +
        i(u_1,u_2,u_3)
        \left(
        \ba{ccc}
        bL\psi_{11}(x)L   &  ibL\psi_{12}(x)L  &    ibL\psi_{13}(x)L   \\
        -ibL\psi_{12}^*(x)L &   0   &   bL  \\
        -ibL\psi_{13}(x)L       &   bL  &   0
        \ea
        \right)=0,
        \ea
        \ee
        \be\la{197}
        \ba{c}
        i\lambda(v_1,v_2,v_3)
        \left(
        \ba{ccc}
        L   &   0 & 0 \\
        0   &   0 & 0 \\
        0 & 0 & 0
        \ea
        \right)
        -\left(\frac{dv_1}{dx},\frac{dv_2}{dx},\frac{dv_3}{dx}\right)
        \left(
        \ba{ccc}
        0   & 0 &  bL  \\
        0  & bL & 0  \\
        bL & 0 & 0
        \ea
        \right)
        + \\ +
        i(v_1,v_2,v_3)
        \left(
        \ba{ccc}
        bL\widetilde{\psi}_{11}(x)L     &   ibL\widetilde{\psi}_{12}(x)L  &  ibL\widetilde{\psi}_{13}(x)L \\
        -ibL\widetilde{\psi}_{12}^*(x)L &   0   &   bL  \\
        -ibL\widetilde{\psi}_{13}(x)L       &   bL  &   0
        \ea
        \right)=0.
        \ea
        \ee
The equations \rf{196} and \rf{197} imply that $u_{\lambda}(x)$ and $v_{\lambda}(x)$ from \rf{195} satisfy the next systems correspondingly
        \be\la{200}
        \left\{
        \ba{l}
        \frac{du_1}{dx}+u_1L\psi_{13}-iu_2=0 \\
        \frac{du_2}{dx}+u_1L\psi_{12}-iu_3=0 \\
        -b\frac{du_3}{dx}+ibu_1L\psi_{11}+bu_2L\psi_{12}^*+bu_3L\psi_{13}+i\lambda u_1=0
        \ea
        \right.
        \ee
and
        \be\la{201}
        \left\{
        \ba{l}
        \frac{dv_1}{dx}+v_1L\widetilde{\psi}_{13}-iv_2=0 \\
        \frac{dv_2}{dx}+v_1L\widetilde{\psi}_{12}-iv_3=0 \\
        -b\frac{dv_3}{dx}+ibv_1L\widetilde{\psi}_{11}+bv_2L\widetilde{\psi}_{12}^*+bv_3L\widetilde{\psi}_{13}+i\lambda v_1=0.
        \ea
        \right.
        \ee
        \par
Straightforward calculations show that in the case when the matrix functions $\psi_{11}$, $\psi_{121}$, $\psi_{13}$, $\widetilde{\psi}_{11}$, $\widetilde{\psi}_{12}$, $\widetilde{\psi}_{13}$ satisfy additional conditions (mentioned below) the next theorem is true.
        \begin{thm}\la{t18}
        The input $u(x,t)$ and the output $v(x,t)$ from \rf{189} and \rf{195} of the open system \rf{184} satisfy the compatibility conditions
        $$
        \ba{l}
        \frac{du_{\lambda}(x)}{dx}-i\sigma_A^{-1}(\lambda \sigma_B +\psi(x))u_{\lambda}(x)=0,
        \\
        \frac{dv_{\lambda}(x)}{dx}-i\sigma_A^{-1}(\lambda\sigma_B+\widetilde{\psi}(x))v_{\lambda}(x)=0.
        \ea
        $$
and the characteristic operator function $W_A(\lambda)$ from \rf{166-1} of the operator $A=bB^3$ maps the input $u(x,t)=e^{it\lambda}u_{\lambda}(x)=e^{it\lambda}(u_1(x),u_2(x),u_3(x))$ satisfying the equations
        \be\la{204}
        \left\{
        \ba{l}
        \frac{d^3u_1}{dx^3}(x)+\frac{du_1}{dx}(x)p_1(x)+u_1(x)q_1(x)=-\frac{i\lambda}{b}u_1(x) \\
        \frac{d^3u_2}{dx^3}(x)+\frac{du_2}{dx}(x)p_2(x)+u_2(x)q_2(x)=-\frac{i\lambda}{b}u_2(x) \\
        \frac{d^3u_3}{dx^3}(x)+\frac{du_3}{dx}(x)p_3(x)+u_3(x)q_3(x)=-\frac{i\lambda}{b}u_3(x)
        \ea
        \right.
        \ee
to the output $v(x,t)=e^{it\lambda}v_{\lambda}(x)=e^{it\lambda}(v_1(x),v_2(x),v_3(x))$ which are solutions of the following differential equations
        \be\la{205}
        \left\{
        \ba{l}
        \frac{d^3v_1}{dx^3}(x)+\frac{dv_1}{dx}(x)\widetilde{p}_1(x)+v_1(x)\widetilde{q}_1(x)=-\frac{i\lambda}{b}v_1(x) \\
        \frac{d^3v_2}{dx^3}(x)+\frac{dv_2}{dx}(x)\widetilde{p}_2(x)+v_2(x)\widetilde{q}_2(x)=-\frac{i\lambda}{b}v_2(x) \\
        \frac{d^3v_3}{dx^3}(x)+\frac{dv_3}{dx}(x)\widetilde{p}_3(x)+v_3(x)\widetilde{q}_3(x)=-\frac{i\lambda}{b}v_3(x),
        \ea
        \right.
        \ee
where the matrix functions $p_k(x)$, $q_k(x)$, $\widetilde{p}_k(x)$, $\widetilde{q}_k(x)$ ($k=1,2,3$) have the form
        $$
        p_1(x)=iL\psi_{12}-iL\psi_{12}^*+2L\frac{d\psi_{13}}{dx}-(L\psi_{13})^2
        $$
        $$
        p_2(x)=iL\psi_{12}-iL\psi_{12}^*-L\frac{d\psi_{13}}{dx}-(L\psi_{13})^2
        $$
        $$
        p_3(x)=iL\psi_{12}-(L\psi_{13})^2-L\frac{d\psi_{13}}{dx}-iL\psi_{12}^*
        $$
        $$
        q_1(x)=L\frac{d^2\psi_{13}}{dx^2}+iL\frac{d\psi_{12}}{dx}+iL\psi_{11}-iL\psi_{12}L\psi_{13}-iL\psi_{13}L\psi_{12}^*
        -L\frac{d\psi_{13}}{dx}L\psi_{13}
        $$
        $$
        q_2(x)=-iL\psi_{13}L\psi_{12}+iL\psi_{11}+2iL\frac{d\psi_{12}}{dx}-iL\frac{d\psi_{12}^*}{dx}-iL\psi_{12}^*L\psi_{13}
        $$
        $$
        q_3(x)=-L\frac{d\psi_{13}}{dx}L\psi_{13}-L\frac{d^2\psi_{13}}{dx^2}+iL\psi_{11}-2iL\frac{d\psi_{12}^*}{dx}-iL\psi_{12}^*L\psi_{13}
        -L\psi_{13}L\frac{d\psi_{13}}{dx}-iL\psi_{13}L\psi_{12}
        $$
        $$
        \widetilde{p}_1(x)=iL\widetilde{\psi}_{12}-iL\widetilde{\psi}_{12}^*+2L\frac{d\widetilde{\psi}_{13}}{dx}-(L\widetilde{\psi}_{13})^2
        $$
        $$
        \widetilde{p}_2(x)=iL\widetilde{\psi}_{12}-iL\widetilde{\psi}_{12}^*-L\frac{d\widetilde{\psi}_{13}}{dx}-(L\widetilde{\psi}_{13})^2
        $$
        $$
        \widetilde{p}_3(x)=iL\widetilde{\psi}_{12}-(L\widetilde{\psi}_{13})^2-L\frac{d\widetilde{\psi}_{13}}{dx}-iL\widetilde{\psi}_{12}^*
        $$
        $$
        \widetilde{q}_1(x)=L\frac{d^2\widetilde{\psi}_{13}}{dx^2}+iL\frac{d\widetilde{\psi}_{12}}{dx}+iL\widetilde{\psi}_{11}
        -iL\widetilde{\psi}_{12}L\widetilde{\psi}_{13}-iL\widetilde{\psi}_{13}L\widetilde{\psi}_{12}^*
        -L\frac{d\widetilde{\psi}_{13}}{dx}L\widetilde{\psi}_{13}
        $$
        $$
        \widetilde{q}_2(x)=-iL\widetilde{\psi}_{13}L\widetilde{\psi}_{12}+iL\widetilde{\psi}_{11}+2iL\frac{d\widetilde{\psi}_{12}}{dx}
        -iL\frac{d\widetilde{\psi}_{12}^*}{dx}-iL\widetilde{\psi}_{12}^*L\widetilde{\psi}_{13}
        $$
        $$
        \widetilde{q}_3(x)=-L\frac{d\widetilde{\psi}_{13}}{dx}L\widetilde{\psi}_{11}-L\frac{d^2\widetilde{\psi}_{13}}{dx^2}
        +iL\widetilde{\psi}_{11}-2iL\frac{d\widetilde{\psi}_{12}^*}{dx}-iL\widetilde{\psi}_{12}^*L\widetilde{\psi}_{13}
        -L\widetilde{\psi}_{13}L\frac{d\widetilde{\psi}_{13}}{dx}-iL\widetilde{\psi}_{13}L\widetilde{\psi}_{12}.
        $$
        \end{thm}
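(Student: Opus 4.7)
The plan is to follow the same scheme as in Theorem \ref{t17}, but now with the matrix functions $\psi(x)$ and $\widetilde{\psi}(x)$ depending on $x$. First I would observe that the two compatibility conditions of the theorem are immediate consequences of the matrix wave equations \rf{193} and \rf{194}: since $\sigma_A$ (given by \rf{33-7}) is invertible when $\det L\neq 0$, multiplying these equations on the right by $\sigma_A^{-1}$ yields the stated first--order systems for $u_\lambda(x)$ and $v_\lambda(x)$. The relation $v_\lambda(x)=W_A(\lambda)u_\lambda(x)$ has already been established in \rf{192}, so the remaining task is to derive the two groups of third--order scalar ODEs.

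Next I would work with the coupled first--order systems \rf{200} and \rf{201}, which have already been extracted in the text from the matrix wave equations by plugging in the block forms \rf{33-7}, \rf{187}, \rf{188} and the decomposition \rf{195}. From the first equation of \rf{200} I would express $u_2=-i\,du_1/dx - iu_1L\psi_{13}$; differentiating once more and using the second equation of \rf{200} would give $u_3$ as a combination of $u_1$, $du_1/dx$, and $d^2u_1/dx^2$. Substituting these two expressions into the third equation of \rf{200} and simplifying would produce the third--order ODE for $u_1$ with coefficients $p_1(x)$, $q_1(x)$ as stated. An analogous elimination, but starting instead by expressing $u_1$ and $u_3$ in terms of $u_2$ and its derivatives, yields the ODE for $u_2$ with coefficients $p_2(x),q_2(x)$; and the symmetric procedure based on $u_3$ yields the ODE for $u_3$. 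The same three eliminations, with $\psi_{ij}$ replaced by $\widetilde{\psi}_{ij}$, produce the equations \rf{205} for the output components.

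The main obstacle I expect is bookkeeping of the non--commutativity of the matrix coefficients $L\psi_{ij}(x)$ (and $L\widetilde\psi_{ij}(x)$) during the differentiations. Because the components $u_k$, $v_k$ are row vectors and all matrix coefficients act on the right, the product rule generates many terms of the form $u_k L\,d\psi_{ij}/dx$, $u_k L\psi_{ij}L\psi_{lm}$, and $u_k L\,d^2\psi_{13}/dx^2$, whose order must be preserved carefully. The ``additional conditions'' alluded to before the theorem statement are precisely the algebraic identities required so that the cross terms coming from non--commuting products collapse into the compact expressions for $p_k(x),q_k(x)$ (and their tilded analogues) listed in the theorem; they are the $x$--dependent analogues of the conditions derived in Theorem \ref{t17} for the constant--coefficient case, and must be imposed to guarantee that the third--order reduction closes. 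Once these identities are verified, comparing the resulting coefficients with the stated formulas for $p_k,q_k,\widetilde p_k,\widetilde q_k$ completes the proof.
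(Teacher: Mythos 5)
Your proposal follows essentially the same route as the paper: the compatibility conditions come from multiplying the matrix wave equations \rf{193}, \rf{194} by $\sigma_A^{-1}$, the relation $v_\lambda=W_A(\lambda)u_\lambda$ is \rf{192}, and the third-order ODEs are obtained by eliminating two components from the first-order systems \rf{200} and \rf{201}, with the additional conditions listed after the theorem ensuring the non-commutative cross terms close up — which is exactly the "straightforward calculations" the paper invokes. The only minor imprecision is that for $u_2$ and $u_3$ one cannot literally solve for the other components as you suggest; rather, the elimination closes only by virtue of those additional conditions, a point you do correctly flag.
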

        \par
        It has to mention that the equations \rf{204} and \rf{205} are satisfied when we suppose the next additional conditions
        $$
        \ba{c}
        \psi_{11}L\psi_{13}-\psi_{13}L\psi_{11}+\psi_{13}L\psi_{13}L\psi_{12}-\psi_{12}L\psi_{13}L\psi_{13}-
        \frac{d\psi_{13}}{dx}L\psi_{12}
        - \\ -
        \psi_{12}L\frac{d\psi_{13}}{dx}-2\psi-{13}L\frac{d\psi_{12}}{dx}+\frac{d\psi_{11}}{dx}+\frac{d^2\psi_{12}}{dx^2}=0,
        \ea
        $$
        $$
        \psi_{11}L\psi_{13}-\psi_{13}L\psi_{11}+i\psi_{12}L\psi_{12}^*-i\psi_{12}^*L\psi_{12}-
        \frac{d\psi_{12}^*}{dx}L\psi_{13}-\psi_{12}^*L\frac{d\psi_{13}}{dx}+2\frac{d\psi_{11}}{dx}-\frac{d^2\psi_{12}^*}{dx^2}=0,
        $$
        $$
        \ba{c}
        \psi_{12}L\psi_{12}^*L\psi_{13}-\psi_{13}L\psi_{12}L\psi_{12}^*+i\psi_{13}L\psi_{11}L\psi_{13}-i\psi_{13}L\psi_{13}L\psi_{11}+
        i\psi_{11}L\psi_{12}-i\psi_{12}L\psi_{11}
        - \\ -
        i\frac{d\psi_{11}}{dx}L\psi_{13}-i\psi_{11}L\frac{d\psi_{13}}{dx}+i\frac{d\psi_{13}}{dx}L\psi_{11}+2i\psi_{13}L\frac{d\psi_{11}}{dx}
        + \\ +
        2\psi_{12}L\frac{d\psi_{12}^*}{dx}+\frac{d\psi_{12}}{dx}L\psi_{12}^*-\frac{d^2\psi_{11}}{dx^2}=0,
        \ea
        $$
        $$
        \ba{c}
        \widetilde{\psi}_{11}L\widetilde{\psi}_{13}-\widetilde{\psi}_{13}L\widetilde{\psi}_{11}+
        \widetilde{\psi}_{13}L\widetilde{\psi}_{13}L\widetilde{\psi}_{12}-\widetilde{\psi}_{12}L\widetilde{\psi}_{13}L\widetilde{\psi}_{13}-
        \frac{d\widetilde{\psi}_{13}}{dx}L\widetilde{\psi}_{12}
        - \\ -
        \widetilde{\psi}_{12}L\frac{d\widetilde{\psi}_{13}}{dx}-2\widetilde{\psi}-{13}L\frac{d\widetilde{\psi}_{12}}{dx}+
        \frac{d\widetilde{\psi}_{11}}{dx}+\frac{d^2\widetilde{\psi}_{12}}{dx^2}=0,
        \ea
        $$
        $$
        \widetilde{\psi}_{11}L\widetilde{\psi}_{13}-\widetilde{\psi}_{13}L\widetilde{\psi}_{11}+
        i\widetilde{\psi}_{12}L\widetilde{\psi}_{12}^*-i\widetilde{\psi}_{12}^*L\widetilde{\psi}_{12}-
        \frac{d\widetilde{\psi}_{12}^*}{dx}L\widetilde{\psi}_{13}-\widetilde{\psi}_{12}^*L\frac{d\widetilde{\psi}_{13}}{dx}+
        2\frac{d\widetilde{\psi}_{11}}{dx}-\frac{d^2\widetilde{\psi}_{12}^*}{dx^2}=0,
        $$
        $$
        \ba{c}
        \widetilde{\psi}_{12}L\widetilde{\psi}_{12}^*L\widetilde{\psi}_{13}-\widetilde{\psi}_{13}L\widetilde{\psi}_{12}L\widetilde{\psi}_{12}^*+
        i\widetilde{\psi}_{13}L\widetilde{\psi}_{11}L\widetilde{\psi}_{13}-i\widetilde{\psi}_{13}L\widetilde{\psi}_{13}L\widetilde{\psi}_{11}+
        i\widetilde{\psi}_{11}L\widetilde{\psi}_{12}-i\widetilde{\psi}_{12}L\widetilde{\psi}_{11}
        - \\ -
        i\frac{d\widetilde{\psi}_{11}}{dx}L\widetilde{\psi}_{13}-i\widetilde{\psi}_{11}L\frac{d\widetilde{\psi}_{13}}{dx}+
        i\frac{d\widetilde{\psi}_{13}}{dx}L\widetilde{\psi}_{11}+2i\widetilde{\psi}_{13}L\frac{d\widetilde{\psi}_{11}}{dx}
        + \\ +
        2\widetilde{\psi}_{12}L\frac{d\widetilde{\psi}_{12}^*}{dx}+\frac{d\widetilde{\psi}_{12}}{dx}L\widetilde{\psi}_{12}^*-
        \frac{d^2\widetilde{\psi}_{11}}{dx^2}=0.
        \ea
        $$
        \par
These additional conditions will be simpler in the case of specific choice of the $m\times m$ matrix functions $\psi_{11}(x)$, $\psi_{12}(x)$, $\psi_{13}(x)$, $\widetilde{\psi}_{11}(x)$, $\widetilde{\psi}_{12}(x)$, $\widetilde{\psi}_{13}(x)$ or when they are scalar functions (i.e. $m=1$).
        \par
Similar differential equation can be obtained for the input and the output of the open systems for appropriate couples and triplets of operators corresponding to the Sine-Gordon equation, the Davey-Stewartson equation and etc.
The obtained connection between the generalized Gelfand-Levitan-Marchenko equation and complete characteristic function or characteristic operator function (transfer function) of one of the operators from the couples and the triplets when one of the operators is dissipative with spectrum consisting only on the eigenvalues can be applied for finding the connection with Blashke product, Bourgain algebras, Henkel operators, using results in \cite{HRISTOV1}, \cite{HRISTOV2}.
        \par
Finally, it is worth to mention that the obtained results in this paper can be expanded to the case n-tuples of commuting nonselfadjoint operators when one of the operators belongs to the class of nondissipative unbounded operators, presented as a regular coupling of dissipative and antidissipative $K^r$-operators with real spectra and with different domains of the operator and its adjoint (i.e. closed operators in a Hilbert space whose Hermitian part has deficiency index $(r, r)$ ($0 <r<\infty$) and a nonempty resolvent set), using the triangular model of these operators, introduced and investigated in \cite{BAN3}, \cite{IEOT2}.


\begin{thebibliography}{99}


\bibitem{ALPAY} D. Alpay, A. Melnikov, V. Vinnikov, Schur Algorithm in The Class SI of J-contractive Functions Intertwining Solutions of Linear Differential Equations, Integral Equations and Operator Theory, 74(3), (2012), 313-344.

\bibitem{BAN2} G.S. Borisova, A new form of the triangular model
of M.S. Liv\v sic  for a class of nondissipative operators,
Comptes Rendus de l'Acad\`{e}mie bulgare des Sciences, 53 (10), (2000), 9-12.


\bibitem{SERDICA2} G.S. Borisova, The operators $A_{\gamma}=\gamma A+\overline{\gamma}A^*$ for a class of nondissipative operators $A$ with a limit of the corresponding correlation function, Serdica Math. J., 29, (2003), 109-140.

\bibitem{BAN6} G.S. Borisova, The connection between the Sturm-Liouville systems and the triangular model of couplings of dissipative and antidissipative operators, Comptes Rendus de l'Acad\`{e}mie bulgare des Sciences, Tome 69,
    No5, (2016), 563-572.

\bibitem{BAN7} G.S. Borisova, Commuting nonselfdjoint operators, open systems, and wave equations, Comptes Rendus de l'Acad\`{e}mie bulgare des Sciences, 2019, (submitted).

\bibitem{JMAA} G.S. Borisova, Kiril P. Kirchev, Solitonic combinations and commuting nonselfadjoint operators, Journal of mathematical analysis and applications, 424 (2015), 21-48.


\bibitem{BRODSKILIVSIC} M.S. Brodskii, M.S. Liv\v sic, Spectral analysis of non-self-adjoint operators and intermediate systems,  Transl. Am. Math. Soc. (2) 13, 265-346. 1960.


\bibitem{HRISTOV1} M. Hristov, On Bourgain algebras of backward shift invariant algebras and their subalgebras, Comptes Rendus de l'Acad\`{e}mie bulgare des Sciences, (2014), v. 67, №4, 449-458.

\bibitem{HRISTOV2} M. Hristov, Bourgain algebras of subalgebras of $H^{\infty} (D)$ on the unit disk, Comptes Rendus de l'Acad\`{e}mie bulgare des Sciences, (2015) v.68, №2, 141-149.

\bibitem{SERDICA1} K.P. Kirchev, G.S. Borisova, Commuting nonselfadjoint operators and their characteristic operator-functions, Serdica Math. J., 23 (1997), 313-334.

\bibitem{IEOT1} K.P. Kirchev, G.S. Borisova, Nondissipative curves in Hilbert spaceshaving a limit of the corresponding correlation function, Integral
Equation Operator Theory, 40 (2001), 309-341.

\bibitem{BAN3} K. Kirchev, G. Borisova, A triangular model of  regular couplings of dissipative and anridissipative operators, Comptes rendus de l'Acad\`{e}mie bulgare des sciences 58 (5) (2005) 481-486.

\bibitem{SERDICA3} K.P. Kirchev, G.S. Borisova, Triangular
models and asymptotics of continuous curves with bounded and
unbounded semigroup generators, Serdica Math. J., \textbf{31},
2005, 95-174.

\bibitem{IEOT2} K.P. Kirchev, G.S. Borisova, Regular Couplings of Dissipative and Anti-Dissipative Unbounded Operators, Asymptotics of the Corresponding Non-Dissipative Processes and the Scattering Theory, Integral
Equation Operator Theory, 57, 2007, 339-379.



\bibitem{LIVSIC6} M.S. Liv\v sic, Operators, Oscilations, Waves (Open systems), Transl. Math. Monographs 34, 1972.

\bibitem{LIVSIC-OPERATORS-FIELDS} M.S. Liv\v sic, Commuting operators and fields of systems, distributted in Euclidean space, Operator Theory: Advan. Appl., vol.4, BirkhauserVerlag, Basel, 1982.

\bibitem{LIVSIC-SYSTEM THEORY} M.S. Liv\v sic, System theory and wave dispersion, Proceedings of MTNS-83, Springer, New York, 1984.

\bibitem{LIVSIC-MAPPING} M.S. Liv\v sic, Commuting nonselfadjoint operators and mapping of vector bundles on algebraic curves, Oper. Theory: Adv. Appl. 19 (1986), 255-277 (proceedings Workshop Amsterdam, June Y-7, 1985, Birkhauser).


\bibitem{LIVSIC4} M.S. Liv\v sic, Commuting nonselfadjoint operators and collective motions of systems, in Commuting Nonselfadjoint Operators in Hilbert Space, Springer Berlin Heidelberg, 1987, p. 1-38.

\bibitem{LIVSIC5} M.S. Liv\v sic, Operator waves in Hilbert space and related partial differential equations, IEOT, 2/1, 1979, 25-47.


\bibitem{LIVSIC-LECTURENOTES} M.S. Liv\v sic, Commuting nonselfadjoint operators in Hilbert space, Lecture Notes in Math. 1272, Springer-Verlag, 1987, 1-39.

\bibitem{LIVSIC3} M.S. Liv\v sic, What is a particle from the standpoint of system theory, Integral Equations and Operator
Theory, vol. 14, 1991, 552-563.

\bibitem{LIVSIC-AT-AL} M.S. Liv\v sic, N. Kravitsky, A.S. Markus, V. Vinnikov, Theory of commuting nonselfadjoint operators,
    Kluwer Academic Publisher Group, Dordrecht: Springer 1995.


\bibitem{LIVSIC-AVISHAI} M. S.Liv\v sic, Y. Avishai, A Study of
Solitonic Combinations Based on the Theory of Commuting
Non-Self-Adjoint Operators, Linear Algebra and its Applications,
122/123/124: 357-414, 1989.

\bibitem{LIVSIC-VORTICES} M.S. Liv\v sic, Vortices of 2D Systems, Operator Theory, System Theory and Related Topics. Operator Theory: Advances and Applications, vol. 123, 2001, Birkh\"{a}user, Basel, 7-41.

\bibitem{MARCHENKO} V.A. Marchenko, Nonlinear Equations and Operator Algebra, Naukova Dumka, Kiev, 1986, (in Russian).

\bibitem{MELNIKOV} A. Melnikov, Construction of a Sturm-Liouville vessel using Gelfand-Levitan theory. On solution of the Korteweg-de Vries equation in the first quadrant, Journal of mathematical fhysics, 58, 051501 (2017), http://doi.org/10.1063/1.4980015.

\bibitem{MELNIKOV1} A. Melnikov, On Construction of Solutions of Evolutionary Nonlinear Schrödinger Equation
 - International Journal of Partial Differential Equations, 1 (2014), Article ID 830413, http://dx.doi.org/10.1155/2014/830413

\bibitem{MELNIKOV2} A. Melnikov, Inverse scattering of canonical systems and their evolution
 - Complex Analysis and Operator Theory, 9 (2014), 793-819.

\bibitem{MYSOHATA} S. Mysohata, Theory of Partial Differential Equations, Moscow, 1977 (in Russian).

\bibitem{SCOTT} A.C. Scott, F.V.F. Chu, D.W. McLaughlin, The
soliton: A new concept in applied science, Proc. IEEE 63 (1973), 1443-1483.

\bibitem{WAKSMAN} L. Waksman, Harmonic analysis of multi-parameter semigroups of contractions, Commuting Nonselfadjoint Operators in Hilbert Space. Lecture Notes in Mathematics, vol 1272. Springer, Berlin, Heidelberg, (1982), 39-114.

\bibitem{ZOLOTAREV-TIME CONES} V.A. Zolotarev, Time cones and a functional model on a Riemann surface, Math.USSR-Sb. 181 (1990), 965-995, (English translation of Math. sb. 70 (1991), 399-429).

        \end{thebibliography}
\end{document}